\documentclass[12pt, a4paper]{amsart}
\usepackage[dvips]{epsfig}
\usepackage{amsmath}
\usepackage{amssymb}
\usepackage{amsfonts}
\usepackage{bbm}
\usepackage{amsthm}
\usepackage{amsbsy}
\usepackage{amsgen}
\usepackage{amscd}
\usepackage{amsopn}
\usepackage{amstext}
\usepackage{amsxtra}
\usepackage[utf8]{inputenc}
\usepackage{enumerate}
\usepackage{hyperref}
\usepackage{lineno}
\usepackage{marginnote}
\usepackage{verbatim}
\usepackage{calrsfs}
\usepackage{times, graphicx}
\usepackage{layout}
\usepackage{subcaption}
\usepackage{eso-pic}
\usepackage{lastpage}
\usepackage{dsfont}
\usepackage{fixmath}

\DeclareMathAlphabet{\pazocal}{OMS}{zplm}{m}{n}
\usepackage[foot]{amsaddr}

  \evensidemargin
\oddsidemargin
\newtheorem{theorem}{Theorem}[section]
\newtheorem{proposition}[theorem]{Proposition}
\newtheorem{lemma}[theorem]{Lemma}
\newtheorem{corollary}[theorem]{Corollary}

\theoremstyle{definition}
\newtheorem{definition}[theorem]{Definition}

\newtheorem{conjecture}[theorem]{Conjecture}

\numberwithin{equation}{section}

\DeclareMathAlphabet{\pazocal}{OMS}{zplm}{m}{n}

\usepackage{accents}
\newcommand{\ubar}[1]{\underaccent{\bar}{#1}}

\newcommand{\id}{\mathds{1}}
\newcommand{\R}{\mathbb{R}}
\newcommand{\C}{\mathbb{C}}

\newcommand{\E}{\mathbb{E}}
\newcommand{\Z}{\mathbb{Z}}

\newcommand{\eps}{\varepsilon}
\newcommand{\Jc}{\pazocal{J}}

\newcommand{\prob}{\pazocal{P}r}
\newcommand {\Pb} {\prob}
\newcommand{\Mcc}{\pazocal{M}}
\newcommand{\Dc}{\pazocal{D}}
\newcommand{\Dcc}{\mathcal{D}}
\newcommand{\area}{\operatorname{Area}}
\newcommand{\inj}{\operatorname{Inj}}
\newcommand{\Bc}{\mathcal{B}}
\newcommand{\Sc}{\pazocal{S}}
\newcommand{\Ec}{\pazocal{E}}
\newcommand{\Ecc}{\mathcal{E}}
\newcommand{\var}{\operatorname{Var}}
\newcommand{\Gc}{\pazocal{G}}

\newcommand{\Wc}{\pazocal{W}}
\newcommand{\Fc}{\pazocal{F}}
\newcommand{\Xc}{\pazocal{X}}
\newcommand{\Hc}{\pazocal{H}}
\newcommand{\Fcc}{\mathcal{F}}
\newcommand{\vol}{\operatorname{Vol}}
\newcommand{\med}{\operatorname{Med}}
\newcommand{\Cc}{\pazocal{C}}
\newcommand{\Ac}{\pazocal{A}}

\begin{document}

\title{Sign-balance of random Laplace eigenfunctions} 
\author{Stephen Muirhead$^{1}$}
\email{stephen.muirhead@monash.edu}
\address{$^{1}$School of Mathematics, Monash University}
\author{Igor Wigman$^{2}$}
\email{igor.wigman@kcl.ac.uk}
\address{$^{2}$Department of Mathematics, King's College London}

\date{} 
\date{\today}

\begin{abstract}
Motivated by the problem of the small-scale sign distribution of Laplace eigenfunctions, we introduce a strong notion of sign-balance for (eigen)functions, and prove that random eigenfunctions are sign-balanced above a precisely determined scale with almost full probability. The scale is proven to be optimal up to a logarithmic power of the energy. Our results include the important case of random spherical harmonics, as well as more general band-limited random waves on smooth Riemannian manifolds. Extending the notion of balance to arbitrary levels, we determine the precise optimum scale above which random eigenfunctions are volume-balanced with respect to non-zero levels. Beyond their intrinsic interest, our results serve as a model for a natural conjecture on the optimal scale at which {\em deterministic} Laplace eigenfunctions are sign-balanced.
\end{abstract}

\maketitle

\section{Introduction}
\label{s:intro}

\subsection{Motivation}
\label{sec:intro mot}

Let $(\Mcc,g)$ be a closed Riemannian $d$-dimensional manifold, and $\Delta$ the Laplace-Beltrami operator on $\Mcc$. It is well-known that the spectrum of $\Delta$ is discrete, consisting of eigenvalues $\lambda_{j}\rightarrow \infty$ corresponding to (real-valued) eigenfunctions $\varphi_j$ satisfying the Helmholtz equation
\begin{equation}
\label{eq:Helmholts}
\Delta\varphi_{j} + \lambda_{j}^{2}\varphi_{j} = 0. 
\end{equation}
We are interested in the nodal geometry of $\varphi_{j}$ for large $\lambda_{j}$. In the case of `generic' chaotic manifolds $\Mcc$, a heuristic argument due to M.\ Berry ~\cite{Berry77} suggests that, in the high energy limit $\lambda_{j}\rightarrow \infty$, the eigenfunctions $\varphi_{j}$ exhibit, on geodesic balls $B_{r}(x) \subseteq \Mcc$ of radius at or above the \textit{Planck scale} $r = \frac{1}{\lambda_{j}}$, various statistical properties similar to those of random monochromatic waves. These are represented by the centred Gaussian isotropic random field $F:\R^{d}\rightarrow \R$ with the covariance function 
\begin{equation}
\label{eq:Berry's RWM covar}
\E\left[F(x)\cdot F(y)   \right] = \gamma_d(\|x-y\|),
\end{equation}
where $\gamma_d(\|\cdot\|)$ is the Fourier transform of the uniform measure on the sphere $\Sc^d$ (e.g.\ $\gamma_2(\|\cdot\|) = J_0(\|\cdot\|)$). Further, it is suggested that the nodal geometry of $\varphi_{j}$ and $F$ should be similarly related.

\vspace{2mm}
In this paper we are interested in the {\em sign distribution} of the eigenfunctions $\varphi_{j}$. One may expect from Berry's ansatz that \textit{every} geodesic ball $B_{r}(x) \subseteq \Mcc$ of sufficiently large radius $r\gg \frac{1}{\lambda_{j}}$ will contain a `balanced' proportion of positive and negative values of $\varphi_{j}$. That is, in the appropriate regime of $r \gg \frac{1}{\lambda_{j}}$, as $j\rightarrow \infty$ one might expect the \textit{uniform} limit 
\begin{equation}
\label{eq:fair share}
\frac{\left|\varphi_{j}^{-1}(0,\infty)\cap B_{r}(x)\right|}{|B_{r}(x)|} \rightarrow \frac{1}{2}, \quad x \in \Mcc,
\end{equation}
where for a subset $\Dcc\subseteq \Mcc$, $|\Dcc|$ is the $d$-dimensional volume measure of $\Dcc$. 
Informally, we say that a sequence of eigenfunctions $\varphi_{j}$ satisfying \eqref{eq:fair share} is {\em sign-balanced} (see Definition \ref{def:sign-balance} below). Our primary objective is to establish a positive result of this nature, thereby lending support to Berry’s ansatz.

\vspace{2mm}
There are only a few results pertaining to the sign distribution of Laplace eigenfunctions in the literature. A classical {\em quasi-symmetry} theorem due to Donnelly-Fefferman ~\cite{DoFeInt88} asserts, under the extra assumption that $g$ is analytic, that there exists a number $a>0$ depending only on $g$, such that for every {\em fixed} geodesic ball $\Dcc \subseteq \Mcc$, and sufficiently large $j$
(depending on $g$ and the radius of $\Dcc$), one has
\begin{equation*}
\frac{\left|\varphi_{j}^{-1}(0,\infty)\cap \Dcc\right|}{|\Dcc|} \ge a.
\end{equation*}
Recently, Logunov-Nazarov ~\cite{LoNa} extended the quasi-symmetry to {\em smooth} closed surfaces.
For $d=2$, Nazarov-Sodin-Polterovich ~\cite{NaPoSoAJM05} posed the question at what scales the quasi-symmetry persists, and proved ~\cite[Theorem 1.4]{NaPoSoAJM05} 
a small-scale analogue of this result: there exists a number $a>0$ depending only on $g$, so that the bound
\begin{equation}
\label{eq:area>=a/log loglog}
\frac{ | \varphi_{j}^{-1}(0,\infty)\cap \Dcc |}{|\Dcc|} \ge \frac{a}{\log{\lambda_{j}}\cdot \sqrt{\log\log{\lambda_{j}}}}
\end{equation}
holds for every metric disk $\Dcc = B_r(x)$ that is `deeply intersected' by $\varphi_{j}^{-1}(0,\infty)$ (meaning that $\varphi_{j}^{-1}(0,\infty)\cap B_{r/2}(x)\ne \varnothing$). The lower bound in \eqref{eq:area>=a/log loglog} is sharp up to the factor $\sqrt{\log\log{\lambda_{j}}}$, as there exists a number $C>0$, a sequence of spherical caps $\Dcc_{j}$ on the round $2$-sphere, and a sequence $\varphi_{j}$ of spherical harmonics vanishing on the centre of $\Dcc$, so that
\begin{equation*}
\frac{ | \varphi_{j}^{-1}(0,\infty)\cap \Dcc_{j} |}{ |\Dcc_{j} | } \le \frac{C}{\log{\lambda_{j}}}.
\end{equation*}

\subsection{Principal result I: Sign-balance for random spherical harmonics}
In this paper we propose a new notion of \textit{sign-balance}, finer than quasi-symmetry, and address whether an appropriately defined `generic' Laplace eigenfunction satisfies this strong property.

\subsubsection{{\bf Sign-balanced sequences of Laplace eigenfunctions}}

\begin{definition}[Sign-balanced sequences of functions]
$\, $
\label{def:sign-balance}
\begin{enumerate}[i.]

\item For $x\in \Mcc$ and $r>0$ below the injectivity radius $\inj(\Mcc)$ of $\Mcc$, the \textit{defect} of a measurable function $f:\Mcc\rightarrow\R$ on $B_{r}(x)$ is 
\begin{equation}
\label{eq:defect def}
\Dc(x;r)=\Dc_{f}(x;r) := \frac{1}{|B_{r}(x)|}\int\limits_{B_{r}(x)} H(f(y))dy \in [-1,1],
\end{equation}
where $H(\cdot)$ is the sign function 
\begin{equation}
\label{eq:H Heaviside}
H(t)= \begin{cases}
1 &t\ge 0 \\ -1 &t<0
\end{cases} = \id_{\cdot \ge 0}(t) - \id_{\cdot < 0}(t).
\end{equation}
 \item Let $\{r_{j}\}\subseteq \R_{>0}$ be a sequence of positive numbers, and $f_{j}:\Mcc\rightarrow\R$ a sequence of measurable functions. The \textit{sign-imbalance} of $f_{j}$ at scale $r_{j}$ is 
\begin{equation}
\label{eq:sign-imbalance def}
\Bc(f_{j};r_{j}):= \sup\limits_{\substack{x\in\Mcc}}|\Dc_{f_{j}}(x;r_{j})| \in [0,1].
\end{equation}
We say that the $f_{j}(\cdot)$ are \textit{sign-balanced} above the scale $r_{j}$ if
\begin{equation}
\label{eq:sign-bal def}
\lim\limits_{j\rightarrow \infty} \sup_{r \ge r_j} \Bc(f_{j};r) = 0.
\end{equation}
\end{enumerate}
\end{definition}

\vspace{2mm}
Our motivating question is whether, in a generic scenario, the Laplace eigenfunctions $\varphi_{j}$ are sign-balanced in the sense of \eqref{eq:sign-bal def} above suitably chosen scales $r_{j} \gg \frac{1}{\lambda_{j}}$. Our main results assert that, in accordance with the discussion in \S~\ref{sec:intro mot}, generic Laplace eigenfunctions are sign-balanced at sufficiently large scales $r \gg 1/\lambda_j$ with almost full probability, in a sense to be made precise. Moreover we identify a second scale $r  \gg 1/\lambda_j$ below which the sign-balance fails decisively. Somewhat surprisingly, both these scales are {\em above} the Planck scale.

\subsubsection{{\bf Random spherical harmonics}}

Our first result concerns the Gaussian ensembles of random spherical harmonics on spheres $\Sc^{d}$, $d\ge 2$. Let us first define this ensemble in the classical setting of $d=2$. Recall that, for $\ell \ge 1$, the space $\Ec_{\ell}$ of spherical harmonics of degree $\ell$, of dimension $\dim\Ec_{\ell} = 2\ell+1$, are the functions $f$ obeying the equation
\begin{equation*}
\Delta f + \ell(\ell+1)f =0,
\end{equation*}
i.e. satisfy \eqref{eq:Helmholts} with $$\lambda=\lambda_{\ell} = \sqrt{\ell\cdot (\ell+1)}.$$ The space $\Ec_{\ell}$ admits a canonical $L^{2}$-orthonormal basis of functions consisting of the Laplace spherical harmonics $\{Y_{\ell,m}\}_{-\ell\le m\le \ell}$. The \textit{random spherical harmonics} $H_{\ell}:\Sc^{2}\rightarrow \R$ of degree $\ell\ge 1$ are the centred Gaussian random fields
\begin{equation}
\label{eq:Tl spher harm d=2 def}
H_{\ell}(x)=H_{2;\ell}(x) = \frac{\sqrt{4\pi}}{\sqrt{2\ell+1}} \sum\limits_{m=-\ell}^{\ell} a_{m}Y_{\ell,m}(x), \quad  x\in\Sc^{2} ,
\end{equation}
where the $a_{m}$ are standard Gaussian i.i.d.\ random variables. Alternatively, the law of $H_{\ell}$ is uniquely prescribed via its covariance kernel (also the reproducing kernel of $\Ec_{\ell}$ as a Hilbert space)
\begin{equation}
\label{eq:covar spher Legendre}
K_{\ell}(x,y) :=\E\left[H_{\ell}(x)\cdot H_{\ell}(y)\right] = P_{\ell}(\cos(d(x,y))), \quad  x,y\in\Sc^{2},
\end{equation} 
where $d(\cdot,\cdot)$ is the spherical distance and $P_{\ell}$ is the Legendre polynomial of degree $\ell$. Since the law of $H_{\ell}$ is invariant under rotations, the Laplace 
spherical harmonics $\{Y_{\ell,m}\}_{-\ell\le m\le \ell}$ in \eqref{eq:Tl spher harm d=2 def} could be replaced by an arbitrary $L^{2}$-orthonormal basis of $\Ec_{\ell}$.

\vspace{2mm}
The definition \eqref{eq:Tl spher harm d=2 def} can be extended to random (ultra-)spherical harmonics on spheres $\Sc^{d}$ of arbitrary dimensions $d\ge 2$: 
\begin{equation}
\label{eq:Tl spher harm d>2 def}
H_{\ell}(x)=H_{d;\ell}(x) = \frac{\big|\sqrt{\Sc^{d}}\big|}{\sqrt{n_{\ell}}} \sum\limits_{m=-\ell}^{\ell} a_{m}Y_{\ell,m}(x), \quad x\in\Sc^{d},
\end{equation}
where the $a_{m}$ are as in \eqref{eq:Tl spher harm d=2 def}, $\{Y_{\ell,m}\}_{1\le m\le n_{\ell}}$ are the Laplace spherical harmonics on $\Sc^{d}$, and
\begin{equation}
\label{eq:n dim spher harm def}
n_{\ell}=n_{d,\ell}=\dim{\Ec_{d;\ell}} = {\ell+d-1 \choose d-1} + {\ell+d-2 \choose d-1}  = \frac{2}{(d-1)!}\ell^{d-1}+O(\ell^{d-2}), 
\end{equation}
is the dimension of the space $\Ec_{d;\ell}$ of spherical harmonics of degree $\ell$. Alternatively, $H_{\ell}$ is the centred Gaussian random field on $\Sc^{d}$ with the covariance (reproducing) kernel 
\begin{equation*}
\begin{split}
K_{\ell}(x,y)=K_{d;\ell}(x,y) :=\E\left[H_{d;\ell}(x)\cdot H_{d;\ell}(y)\right] &= G_{\alpha;\ell}(\cos(d(x,y))) = \frac{P_{\ell}^{\alpha-1/2,\alpha-1/2}(\cos(d(x,y)))}{{\ell+\frac{d}{2}-1\choose \ell}},
\end{split}
\end{equation*}
where, as above, $d(\cdot,\cdot)$ is the spherical distance, $\alpha= \frac{d-1}{2}$, $G_{d;\ell}$ is the normalised Gegenbauer (ultra-spherical) polynomial of degree $\ell$, and $P_{\ell}^{\alpha,\beta}$ is the Jacobi polynomial of degree $\ell$ .  The law of $H_{\ell}$ is invariant with respect to the rotations of the sphere, and in particular 
\begin{equation}
\label{eq:var==1 spher harm}
\var(H_{\ell}(x))=K_{\ell}(x,x) = G_{\alpha;\ell}(1) \equiv 1 
\end{equation}
due to the implemented normalisation.

\subsubsection{{\bf Statement of principal result I: Sign-balance for random spherical harmonics}}

Recall the definition of sign-imbalance in \eqref{eq:sign-imbalance def}. For every $\ell\ge 1$, $0< r < \pi$, the sign-imbalance of the random spherical harmonics $H_{\ell}$ at scale $r$ is the (real-valued) random variable 
\begin{equation}
\label{eq:sign-imbal spher harm}
\Bc_{\ell}(r)=\Bc_{\ell;d}(r) := \Bc(H_{d;\ell};r). 
\end{equation}
Our first principal result determines a scale $\bar{r}= \bar{r}_{\ell} \gg \frac{1}{\ell}$, such that $\Bc_{\ell}(r)$ vanishes in probability as $\ell\rightarrow \infty$, uniformly above $\bar{r}$. On the other hand, for a certain smaller scale $\frac{1}{\ell} \ll \ubar{r}_\ell \ll \bar{r}_\ell$, the random variables $\left\{\Bc_{\ell}(\ubar{r}_\ell)\right\}$ are shown to be bounded away from zero in the following strong sense:

\begin{definition}[Random variables bounded away from zero]
\label{def:bnd away zero}
A sequence of non-negative random variables $\left\{X_{n}\right\}$ 
is \textit{bounded away from zero in probability} if there exists a number $\epsilon_{0}>0$ such that 
\[ \lim\limits_{n\rightarrow \infty} \prob(X_{n} > \epsilon_0) = 1. \]
\end{definition}

\begin{theorem}
\label{thm:sign-bal spher harm}
Let $d\ge 2$. For $\ell\ge 1$ and $0<r<\pi$ define $\Bc_{\ell}(r)$ as in \eqref{eq:sign-imbal spher harm}, and set 
\begin{equation}
\label{eq:rl cric rad spher harm}
 \ubar{r}_{\ell} = \frac{(\log{\ell})^{\frac{1}{2(d-1)}}}{\ell} \quad \text{and} \quad \bar{r}_{\ell} = \frac{(\log{\ell})^{\frac{1}{d-1}}}{\ell}  .
\end{equation}
Then the following hold:

\begin{enumerate}[i.]
\item There exists a number $\mu>0$ sufficiently large, only depending on $d$, so that, as $\ell\rightarrow \infty$, 
\begin{equation}
\label{eq:sign-imbal->0}
\sup_{r \ge \mu \cdot \bar{r}_\ell}  \Bc_{\ell }(r) \xrightarrow{P} 0.
\end{equation}

\item There exists a number $\mu>0$ sufficiently small, only depending on $d$, so that the sequence 
of random variables $$\left\{\Bc_{\ell}(\mu\cdot \ubar{r}_{\ell})\right\}_{\ell \ge 1}$$ is bounded away from zero in probability.
\end{enumerate}
\end{theorem}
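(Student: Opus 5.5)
Both parts rest on the variance of the defect. For fixed $x$ and $r$, the defect $\Dc(x;r)$ is an average of $H(H_\ell(y))$ over $y\in B_r(x)$. Since $H_\ell$ is a unit-variance Gaussian field, the arcsine law gives $\E[H(H_\ell(y))H(H_\ell(z))]=\frac{2}{\pi}\arcsin K_\ell(y,z)$. Hence
\[
\var(\Dc(x;r)) = \frac{1}{|B_r|^2}\iint_{B_r(x)^2}\frac{2}{\pi}\arcsin K_\ell(y,z)\,dy\,dz .
\]
We have $\E[\Dc]=0$, since $\prob(H_\ell(y)=0)=0$. Expanding $\arcsin$ in its Taylor series and using the Hilb-type asymptotics $|K_\ell(y,z)|\lesssim (\ell\, d(y,z))^{-(d-1)/2}$, the odd powers $K_\ell^{2k+1}$ with $2k+1\ge 3$ are integrable at scale $1/\ell$ in a suitable averaged sense. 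The linear term $\iint K_\ell$ over a ball is controlled by the Fourier transform of the ball indicator evaluated on the sphere of radius $\ell$, which decays like $(\ell r)^{-(d+1)/2}$ relative to the ball. Together these give $\var(\Dc(x;r))\asymp (\ell r)^{-d}$ from the cubic and higher terms. Actually the cubic term is borderline in $d=2$, where $\int |K|^3$ behaves like $\int \rho^{-3/2}\rho\,d\rho$. I will therefore use the refined oscillation of $K_\ell^3$ and the standard Wiener-chaos estimates for Legendre/Gegenbauer moments to get $\var(\Dc)\lesssim (\ell r)^{-d}$ up to at most logarithmic factors.

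\textbf{Part (i).} I will not use only the variance. Instead I will use concentration via a chaos/hypercontractivity bound for the defect, or better, a direct comparison. Decompose $H(t)=\sqrt{2/\pi}\,t+\text{(higher Hermite terms)}$. The linear part is the Gaussian variable $\frac{1}{|B_r|}\int_{B_r(x)} H_\ell$, whose variance is $\sigma^2\asymp (\ell r)^{-(d+1)}$. The higher chaoses have tiny variance and can be handled by a Chebyshev bound on a net. The key quantity, though, is the supremum over $x$ of the Gaussian linear part over roughly $r^{-d}$ essentially independent positions. This supremum is of order $\sigma\sqrt{\log(1/r)}\approx \sqrt{\log\ell}\,(\ell r)^{-(d+1)/2}$.

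Here is the plan in order. First, discretise $x$ on a net of mesh $\eta r$ with $\eta\ll 1$, using that $\Dc(\cdot;r)$ changes by $O(\eta)$ when $x$ moves by $\eta r$ (volume of the symmetric difference). Second, for each net point, bound $\prob(|\Dc(x;r)|>\eps)$ by $\exp(-c\,\eps^2/\var)$. The variance bound alone would give only polynomial tails. To get subgaussian tails I will use Gaussian concentration for the $1$-Lipschitz-in-$L^2$ map $f\mapsto\Dc_f$ after smoothing $H$ to a Lipschitz function $H_\delta$ (the error is controlled by the small volume of $\{|H_\ell|<\delta\}$, which is $O(\delta)$ in expectation). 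The Lipschitz constant of $a\mapsto \frac{1}{|B_r|}\int H_\delta(H_\ell)$ is at most $\delta^{-1}\sup_{\|v\|=1}\|\,\text{projection of }\id_{B_r}/|B_r|\,\|$. Third, take a union bound over about $(\eta r)^{-d}$ net points and over dyadic $r\ge\mu\bar r_\ell$. Everything then hinges on the quantity $(\ell r)^{-(d-1)}\log \ell\to0$, i.e. $r\gg(\log\ell)^{1/(d-1)}/\ell=\bar r_\ell$. This suggests the right Lipschitz constant squared is $\sup\lambda(\text{local Gram})\approx (\ell r)^{-(d-1)}$: indeed $\|\id_{B_r}/|B_r|\|$ projected onto $\Ec_\ell$ has squared norm $\frac{1}{|B_r|^2}\iint K_\ell \cdot\frac{|\Sc^d|}{n_\ell}$, scaled appropriately. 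Calibrating $\delta$ as a small constant and $\eps$ fixed then yields (i) with $\mu$ large. \textbf{The main obstacle} is making this concentration step sharp enough to match the exponent $1/(d-1)$.

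\textbf{Part (ii).} Here I will exhibit a point where the defect is large. The mechanism is that, among about $r^{-d}$ roughly independent balls, one expects some ball where $H_\ell$ is unusually large, of height $\sqrt{\log\ell}$ in a normalised sense, forcing a sign. Concretely, condition on the value of a suitable local projection $\langle H_\ell,\psi_x\rangle$, with $\psi_x$ a zonal harmonic profile adapted to $B_r(x)$. By Gaussian regression, the conditional mean of $H_\ell$ on $B_r(x)$ is $t\cdot K_\ell(x,\cdot)$-like, and the residual has bounded variance. Choosing $x$ among $\asymp r^{-d}$ well-separated points with nearly independent projections (by covariance decay and Slepian/second-moment arguments), the maximum projection is $\asymp\sqrt{d\log(1/r)}$ times its standard deviation. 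This makes the mean shift of order one on a positive proportion of $B_r(x)$ exactly when $(\ell r)^{d-1}\lesssim \mu\log\ell$, giving $\Dc$ bounded below at $r=\mu\ubar r_\ell$. I will then apply a second-moment method over the net to get probability tending to $1$. The delicate point is choosing $\psi_x$ so that the mean profile has fixed sign on a constant fraction of the ball at that scale.
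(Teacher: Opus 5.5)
Part (ii) has a genuine gap, and part (i) is missing its key estimate.

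\emph{Part (ii).} The projection onto $\Ec_\ell$ of any zonal profile about $x$ is a multiple of $K_\ell(x,\cdot)$. So conditioning on $\langle H_\ell,\psi_x\rangle$ is just conditioning on the value $H_\ell(x)$. The conditional mean $t\,K_\ell(x,\cdot)\approx t\,c_d(\ell\theta)^{-(d-1)/2}\cos(\ell\theta+\gamma_d)$ has positive and negative sets of asymptotically equal volume in $B_r(x)$ once $r\ell\to\infty$. Moreover, since $\Phi(a)+\Phi(-a)=1$, the conditional probability that $H_\ell(y)>0$ averages to $\tfrac12+o(1)$ over the ball. A high peak is surrounded by a sign-balanced ring pattern, so this mechanism cannot produce a non-zero defect, and no choice of zonal $\psi_x$ repairs it.

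What you describe is the paper's level-barrier (Proposition~\ref{prop:lev-bar exist man}). It only works at $u\neq0$, where $\tau(u)\neq0$ and a large balanced wave drags $\widetilde{\Dc}_u$ towards $0$. Its cost $\|h\|_{\Hc}^2\asymp(\ell r)^{d-1}$ is exactly what gives your threshold $(\ell r)^{d-1}\lesssim\mu\log\ell$. That threshold is the scale $\bar r_\ell$, not $\ubar r_\ell$, so your last sentence does not follow. A $u=0$ version at $\bar r_\ell$ would settle a question the paper explicitly leaves open.

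The missing idea is a genuine sign-barrier: some $h\in\Ec_\ell$ with $\widetilde{\Dc}_{h;\eps}(x;r)>\eps$ and controlled $\Hc$-norm. The paper's construction is as follows.
\begin{enumerate}[1.]
\item Take three kernels $K_\ell(\xi_j,\cdot)$ centred at points $\xi_j$ at distance $\tilde r\approx Cr^2\ell$ from $x$, in the directions of the cube roots of unity.
\item Give them weights $(1,t_0,t_0)$, amplitude $(\tilde r\ell)^{(d-1)/2}$, and choose $\tilde r$ so that $\tilde r\ell+\gamma_d\in2\pi\Z$.
\item By Lemma~\ref{lem:distortions plane waves} these are plane waves on $B_r(x)$ up to a phase error $O(\ell r^2/\tilde r)=O(1/C)$.
\item Hence $h$ approximates the hexagonal function $w_{t_0}$ of Proposition~\ref{prop:Euclid sign-bar}, whose defect is strictly positive because $D(0)=D'(0)=0<D''(0)$.
\end{enumerate}
The price is $\|h\|_{\Hc}^2\asymp(\tilde r\ell)^{d-1}\asymp(r\ell)^{2(d-1)}$, valid for $r\lesssim\ell^{-1/2}$. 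A Cameron--Martin shift then gives $\Pb(\widetilde{\Dc}_{H_\ell;1}(x;r)>\eps)\ge e^{-C(r\ell)^{2(d-1)}}$, and this doubled exponent is exactly why the lower scale is $\ubar r_\ell$. The paper combines these bounds over $\ell^{\delta_1/4}$ well-separated balls by sprinkled decoupling of the monotone events, using $|K_\ell|\lesssim\ell^{-\delta_1}$ between balls. This avoids conditioning at the location of a maximal projection.

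\emph{Part (i).} Your architecture matches the paper's: a $\delta r$-net, geometric radii, the stability Lemma~\ref{l:stability}, a union bound over $O(\ell^d\log\ell)$ balls, and a per-ball bound $e^{-c(\ell r)^{d-1}}$. Gaussian Lipschitz concentration is a fine substitute for the isoperimetric Proposition~\ref{prop:conc upper bnd gen}, with two provisos.
\begin{enumerate}[1.]
\item The smoothing error must hold with probability $1-e^{-c(\ell r)^{d-1}}$, not merely in expectation. Sandwiching $H$ between two Lipschitz ramps of width $\delta$ achieves this.
\item Chebyshev cannot handle the higher chaoses, since polynomial tails do not survive the union bound.
\end{enumerate}
The step you flag as the main obstacle is both left open and mis-justified. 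The Lipschitz constant of $a\mapsto|B_r|^{-1}\int_{B_r}H_\delta(H_\ell)$ is $O(\delta^{-1}I_B|B_r|^{-1/2})$, where $I_B$ is the norm of the restriction map $\Hc\to L^2(B_r(x))$; equivalently, $I_B^2/|B_r|$ is the top eigenvalue of your local Gram matrix. It is not controlled by the projection of $\id_{B_r}/|B_r|$ onto $\Ec_\ell$. That projection is a single fixed first-chaos direction, of squared norm $\asymp(\ell r)^{-(d+1)}$.

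The first-chaos picture is actually misleading here. For $d=2$ it would predict balance already at $(\log\ell)^{1/3}/\ell\ll\ubar r_\ell$, contradicting part (ii).

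The needed input is Sogge's local $L^2$ estimate, used via Han--Tacy in Proposition~\ref{prop:IB dep B}: $\|u\|^2_{L^2(B_r(x))}\lesssim r\|u\|^2_{L^2(\Sc^d)}$ for $u\in\Ec_\ell$ and $r\ge1/\ell$. This gives $I_B^2\lesssim r/n_\ell$, hence $|B_r|/I_B^2\gtrsim(\ell r)^{d-1}$, and therefore the bound $\ell^{-c\mu^{d-1}}$ per ball at $r\ge\mu\bar r_\ell$.
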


Theorem \ref{thm:sign-bal spher harm}(i.) asserts that the random spherical harmonics are sign-balanced, with almost full probability, above the scale $\bar{r}_{\ell}$ in \eqref{eq:rl cric rad spher harm}, and are {\em not} sign-balanced, with almost full probability, at and below the scale $\ubar{r}_{\ell}$; both scales are a logarithm power above the Planck scale. In fact, from a quantitative version of \eqref{eq:sign-imbal->0} obtained in the proof of Theorem \ref{thm:sign-bal spher harm}, and the Borel-Cantelli lemma, we can deduce that the same holds {\em almost surely} for sequences of random spherical harmonics, see Corollary \ref{cor:as} below for a (more general) result of this nature. As a concrete application, for $d=2$ the relevant scales are:
\[   \ubar{r}_{\ell} = \frac{\left(\log{\ell}\right)^{1/2}}{\ell} \quad \text{and} \quad \bar{r}_{\ell} = \frac{\log{\ell}}{\ell} . \] 
See a further discussion on the significance of this result in \S~\ref{sec:back sig} below. 

\subsection{Principal result II: Volume-balance for random waves on manifolds}

Our second principal result generalises Theorem \ref{thm:sign-bal spher harm} in two directions: to a wider class of Gaussian ensemble of random waves defined on general smooth manifolds, and by extending the notion of sign-balance to the {\em volume-balance} at arbitrary levels. Remarkably, for non-zero levels, we identify the {\em precise} scale, coinciding with $\bar{r}_{\ell}$ in \eqref{eq:rl cric rad spher harm}, at which volume-balance occurs with high probability. This is in contrast to the sign-balance, where there is a slight lacuna between the upper and the lower scales \eqref{eq:rl cric rad spher harm}.

\subsubsection{{\bf Random waves on smooth manifolds}}
\label{sec:rand waves}

Recall that $(\Mcc,g)$ is a closed Riemannian $d$-manifold, and the $\varphi_{j}$ are the Laplace eigenfunctions on $\Mcc$ corresponding to eigenvalues $\lambda_{j}$, satisfying the Helmholtz equation \eqref{eq:Helmholts}. Our results in Theorem \ref{thm:sign-bal spher harm} for the round sphere $\Mcc = \Sc^d$ benefited from the high spectral degeneracy of the Laplace operator on $\Sc^{d}$, which arises due to the rotational symmetry. Since this is not the case for generic manifolds $\Mcc$, we will instead superimpose Laplace eigenfunctions corresponding to different eigenvalues lying in an \textit{energy window}. If the energy window is suitably-chosen -- narrow enough to contain comparable frequencies, but large enough for the superposition to be sufficiently chaotic -- it is reasonable to expect the corresponding random linear combinations to represent `generic' Laplace eigenfunctions on~$\Mcc$.

\vspace{2mm}
Let $T$ be a (large) spectral parameter, and $\eta=\eta(T) \in (0, T]$ a band-width. The \textit{band-limited random Gaussian function (`random wave')} is the centred Gaussian random field
\begin{equation}
\label{eq:fTeta band-lim def}
f_{T}(x)=f_{T,\eta}(x) = \frac{\sqrt{|\Mcc|}}{\sqrt{N}}\sum\limits_{\lambda_{j}\in [T-\eta,T]} a_{j}\varphi_{j}(x), \quad x\in \Mcc,
\end{equation}
where the $a_{j}$ are standard Gaussian i.i.d. random variables, and 
\begin{equation}
\label{eq:N(T,eta) def}
N=N(T,\eta) = \#\{\lambda_{j}\in [T-\eta,T]\} = N(T)-N(T-\eta)
\end{equation}
is the number of energy levels inside the energy window $[T-\eta,T]$, with 
\begin{equation}
\label{eq:N(T) spec func}
N(T):= \#\{ \lambda_{j}\le T\}
\end{equation}
the spectral function of $\Mcc$. (We tacitly assume that $T-\eta$ is not an energy level of $\Mcc$.)
The normalising pre-factor in the definition \eqref{eq:fTeta band-lim def} is chosen so that 
\begin{equation}
\label{eq:var int Mcc}
\int\limits_{\Mcc}\var(f_{T}(x))dx = |\Mcc| .
\end{equation}
 In the regime $\eta \to \infty$, this further ensures that, as $T\rightarrow \infty$,
\[ \var(f_{T}(x))\sim 1 \]
 uniformly in $x \in \Mcc$ (see Proposition \ref{prop:asymp covar waves}). We stress that spectral degeneracies of $\Mcc$ are allowed, in which case the law of $f_{T}$ is invariant w.r.t.\ the choice of an orthonormal basis $\{\varphi_{j}\}_{j\ge 1}$ of $L^{2}(\Mcc)$. For the special case of the round sphere $\Mcc=\Sc^{d}$ we allow $\eta \equiv 1$, whence, by the conventions of \S~\ref{sec:covar kern}, for the choice
\[ T=\sqrt{\lambda_{\ell}}=\sqrt{\ell\cdot (\ell+d-1)}\sim \ell \]
 (otherwise the summation on the r.h.s.\ of \eqref{eq:fTeta band-lim def} is empty), the random waves in \eqref{eq:fTeta band-lim def} coincides with the random spherical harmonics \eqref{eq:Tl spher harm d>2 def}: $$f_{\ell,1}(x) \equiv  H_{\ell}(x).$$ More generally, for $\Mcc=\Sc^{d}$ the random waves \eqref{eq:fTeta band-lim def} are a superposition of the random spherical harmonics $H_{\ell'}$ with $\ell'$ in the given 
energy window (essentially $[\ell-\eta+1,\ell]$ with $\ell\approx T$) with weights prescribed by the normalising factors $\frac{1}{\sqrt{n_{\ell'}}}$ 
in \eqref{eq:Tl spher harm d>2 def}, see \eqref{eq:band-lim sphere} below.

\subsubsection{{\bf Volume-balance at arbitrary levels}}

\begin{definition}[Volume-balanced sequences of functions]
\label{def:sign-balance lev}

Let $\Mcc$ be a smooth manifold, $f:\Mcc\rightarrow\R$ a real-valued measurable function, $u \in \R$ a (fixed) level, and recall that $H(\cdot)$ is the sign function \eqref{eq:H Heaviside}.

\begin{enumerate}[i.] 

\item For $x\in \Mcc$ and $0 < r < \inj(\Mcc)$, the \textit{volume-bias} of $f$ at level $u$, restricted to $B_{r}(x)$, is 
\begin{equation}
\label{eq:defect def lev}
\Dc_{u}(x;r)=\Dc_{f;u}(x;r) := \frac{1}{|B_{r}(x)|}\int\limits_{B_{r}(x)} \left(H(f(y) - u) -  \tau(u)\right)  dy \in  [-2,2],
\end{equation}
with 
\begin{equation}
\label{eq:tau mean bias def}
\tau(u):=1 - 2 \Phi(u)= \E[H(Z-u)],
\end{equation}
where $Z$ a standard Gaussian random variable and $\Phi$ is the Gaussian cdf. We denote
\begin{equation}
\label{eq:uncentr vol bias def}
\widetilde{\Dc}_u(x;r)=\widetilde{\Dc}_{f;u}(x;r) := \frac{1}{|B_{r}(x)|}\int\limits_{B_{r}(x)} H(f(y) - u) dy \in  [-1,1],
\end{equation}
the uncentred variant of $\Dc_u(x;r)$.

\item Let $r_{j}\subseteq \R_{>0}$ be a sequence of positive numbers, and $f_{j}:\Mcc\rightarrow\R$ a sequence of measurable functions. The \textit{volume-imbalance} of $f_{j}$ at scale $r_{j}$ with respect to the level $u$ is
\begin{equation}
\label{eq:sign-imbalance def lev}
\Bc_u(f_{j};r_{j}):= \sup\limits_{\substack{x\in\Mcc}}|\Dc_u(x;r_{j})| \in [0,2].
\end{equation}
We say that the $f_{j}(\cdot)$ are \textit{volume-balanced} above the scale $r_{j}$ with respect to level $u$ if
\begin{equation}
\label{eq:sign-bal def lev}
\lim\limits_{j\rightarrow \infty} \sup_{r \ge r_j}  \Bc_u(f_{j};r) = 0.
\end{equation}
\end{enumerate}
\end{definition}

When applied to a centred Gaussian random field $f:\Mcc\rightarrow\R$, the term $\tau(u) $ in \eqref{eq:defect def lev} is the natural centering, so that $\E[\Dc_u(x;r)]\equiv 0$.  We observe that the volume-bias \eqref{eq:defect def lev} (resp.\ volume-imbalance \eqref{eq:sign-imbalance def lev}) at level $u=0$ coincides with the defect \eqref{eq:defect def} (resp.\ sign-imbalance \eqref{eq:sign-imbalance def}).

\subsubsection{{\bf Statement of principal result II: Volume-balance for random waves on manifolds}}

For $T>0$, $\eta \in (0,T]$, $0<r<\inj(\Mcc)$, and $u \in \R$, we define the \textit{volume-imbalance} of the random wave $f_{T,\eta}$ at level $u$ to be
\begin{equation}
\label{eq:sign-imbal def waves}
\Bc_{T,u}(r)=\Bc_{T,u;\eta}(r) := \Bc_u(f_{T,\eta};r),
\end{equation}
cf.\ \eqref{eq:sign-imbal spher harm}. As announced, our next result determines a scale $r =\bar{r}_T$ above which the volume-imbalance $\Bc_{T,u}(r)$ of $f_{T,\eta}$ at arbitrary level vanishes in probability as $T \to \infty$, and shows that the scale is optimal for non-zero levels $u \neq 0$.

\begin{theorem}
\label{thm:sign bal rand wav}
Let $(\Mcc,g)$ be a smooth Riemannian $d$-manifold, and $\eta=\eta(T)\in (0,T]$ satisfying either (a) $\eta(T)\rightarrow \infty$ or (b) $\Mcc=\Sc^{d}$. Let $u \in \R$, and define $\Bc_{T,u}(r)$ as in \eqref{eq:sign-imbal def waves}. Set 
\begin{equation}
\label{eq:rT crit rad rand wav}
\bar{r}_{T} =  \frac{\min\left\{ (\log{T})^{\frac{1}{d-1}} , \left(\frac{T\log{T}}{\eta}\right)^{\frac{1}{d}}  \right\}}{T} \quad \quad  \ubar{r}_{T} =  \frac{\min\left\{ (\log{T})^{\frac{1}{2(d-1)}} , \left(\frac{T\log{T}}{\eta}\right)^{\frac{1}{2d}}  \right\}}{T}  .
\end{equation}  
Then, as $T \to \infty$, the following hold:
\begin{enumerate}[i.]
\item There exists a number $\mu>0$ sufficiently large, only depending on $\Mcc$, so that
\begin{equation}
\label{eq:balance -> 0 prob}
 \sup_{r \ge \mu \cdot \bar{r}_{T} }  \Bc_{T,u}(r) \xrightarrow{P} 0.
\end{equation}
\item Assume that either (a') there exists a number $\delta_{0}>0$ so that $\eta(T)>T^{\delta_{0}}$ or (b') $\Mcc=\Sc^{d}$. Then there exists a number $\mu>0$ sufficiently small, only depending on $\Mcc$, so that:
\begin{itemize}
\item If $u \neq 0$, $\left\{\Bc_{T,u}(\mu\cdot \bar{r}_{T})\right\}_{T\ge 1}$ is bounded away from zero in probability.
\item If $u = 0$, $\left\{\Bc_{T,u}(\mu\cdot \ubar{r}_{T})\right\}_{T\ge 1}$ is bounded away from zero in probability.
\end{itemize}
\end{enumerate}
\end{theorem}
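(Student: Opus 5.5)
We analyse $\Dc_u(x;r)$ through its Hermite (Wiener chaos) expansion. Since $\var(f_T(y))\to 1$ uniformly (Proposition \ref{prop:asymp covar waves}), we may normalise $f_T$ to unit variance up to negligible errors. We then write
\[ H(f(y)-u)-\tau(u)=\sum_{q\ge 1} \frac{c_q(u)}{q!} H_q(f(y)), \qquad c_q(u)=2\phi(u)H_{q-1}(u). \]
In particular $c_1(u)=2\phi(u)\ne 0$ for every $u$, while $c_2(u)=2u\phi(u)$ vanishes exactly when $u=0$; at $u=0$ the first even term vanishes and the next nonzero term after $q=1$ is $q=3$.

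The key input is the covariance kernel $K_T(x,y)$. In the band-limited regime it behaves like $\gamma_d(T\|x-y\|)$ at distances $\lesssim 1/\eta$, so it decays like $(T\rho)^{-(d-1)/2}$. Beyond the scale $1/\eta$ the window averaging gives extra decay. The variance of the linear term $\Dc^{(1)}(x;r)=\frac{c_1}{|B_r|}\int_{B_r}f$ is the mean of $K_T$ over $B_r\times B_r$. We would compute it spectrally as $\sum_j |\hat{\id}_{B_r}(\lambda_j)|^2/(N|B_r|^2)$, using the local Weyl law and the asymptotics of Fourier transforms of balls. This gives $\var \Dc^{(1)}\asymp \max\{(Tr)^{-(d+1)},\,(\eta r)^{-1}(Tr)^{-d}\cdot(\text{const})\}$ in the relevant regimes. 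Up to constants, the scale $\bar r_T$ is exactly where $\var\Dc^{(1)}\approx 1/\log T$. Similarly, $\var \Dc^{(q)}$ for $q\ge2$ is $\asymp \frac{1}{|B_r|^2}\iint K_T^q$, which for $q\ge 3$ (and for $q=2$ when $d\ge 3$ or off-diagonal) is of order $(Tr)^{-d}$ up to logarithms.

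\textbf{Upper bound (i).} First discretise: cover $\Mcc$ by a net of $O((r/\eps)^{-d})=T^{O(1)}$ points. For $|x-x'|\le \eps r$ the defects differ by $O(\eps)$, deterministically, from ball-volume comparison. Monotonicity of $\Dc$ in $r$ up to an $O(\eps)$ error similarly lets us handle $\sup_{r\ge\mu\bar r_T}$ over a geometric grid of radii. For each fixed $(x,r)$, the linear term is Gaussian with variance $\le C/(\mu^{d+1}\log T)$, or the analogous $\eta$-version. A Gaussian tail bound plus a union over $T^{O(1)}$ points then gives $\sup|\Dc^{(1)}|<\eps$ w.h.p. once $\mu$ is large. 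For the nonlinear remainder $\Dc-\Dc^{(1)}$, we need tail bounds beating $T^{-C}$. We would get these by hypercontractivity on each finite chaos, giving moments $\E|X|^{p}\le (p-1)^{qp/2}\|X\|_2^p$. We would control the infinite tail via the $L^2$ bound $\sum_{q>Q}$ with a truncation. Alternatively, we could apply the Gaussian concentration inequality to a Lipschitz smoothing of $H$. The smoothing costs $O(\eps)$ by a bound on the expected volume of $\{|f-u|<\delta\}$ (Kac–Rice/small-ball), which would itself need to hold uniformly over balls. We expect this uniform control of the nonlinear part to be the main obstacle.

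\textbf{Lower bound (ii).} We choose $\asymp (Tr)^{d}\cdot(\text{poly}\log)^{-1}$-many, i.e. $T^{c}$, well-separated centres $x_i$, with separation $\gg r$ so that the linear terms $\Dc^{(1)}(x_i;r)$ are nearly independent. Near-independence uses the kernel decay; this is where $\eta>T^{\delta_0}$ or the sphere case is needed, so that the number of weakly correlated balls is a power of $T$. The maximum of $T^{c}$ nearly independent Gaussians of standard deviation $\sigma$ is $\approx \sigma\sqrt{2c\log T}$, which we would justify by a Sudakov/Slepian-type comparison or a second-moment count. At $r=\mu\bar r_T$ we have $\sigma^2\asymp \mu^{-(d+1)}/\log T$, so the max of $\Dc^{(1)}$ exceeds a fixed $\eps_0$ w.h.p. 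For $u\ne0$ we must show that the nonlinear remainder is $o(1)$ at the maximising balls. This is plausible because its variance is $(Tr)^{-d}\ll 1/\log T$, but a union bound fails. Instead we would bound the remainder at the single selected ball, conditionally: decompose $f=\Dc^{(1)}$-direction plus an independent part, and use that conditioning on one Gaussian coordinate shifts the field only by a small multiple of the smooth averaged kernel. For $u=0$, the $q=2$ term is absent, but the $q=3$ term has variance of order $(Tr)^{-d}$ with heavier tails, which forces the smaller scale $\ubar r_T$. At that scale, $\sigma^2\log T\to\infty$ for the linear term, making the maximum large, while the conditional remainder is controlled as before. The theorem on spheres is then the case $\eta\equiv1$.
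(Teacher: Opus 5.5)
\textbf{There is a genuine gap in both parts.} It starts with the variance bookkeeping, which gets the roles of the chaoses backwards.

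\textbf{Which chaos governs $\bar r_T$.} The first chaos satisfies $\var\Dc^{(1)}(x;r)\asymp (rT)^{-(d+1)}$, independently of $\eta$. For random spherical harmonics, Funk--Hecke gives it exactly as $c\,\lambda_\ell(r)|B_r|^{-1}H_\ell(x)$, with $|\lambda_\ell(r)|/|B_r|\lesssim (r\ell)^{-(d+1)/2}$. The second chaos, which is present for $u\neq0$, is much larger in every dimension: $|B_r|^{-2}\iint_{B_r\times B_r}K_T^2\asymp[(rT)^{d-1}\max\{1,r\eta\}]^{-1}$. The reason is that $K_T^2\approx (T\rho)^{-(d-1)}\cos^2(\cdot)$ up to distance $1/\eta$, and this integrates against $\rho^{d-1}d\rho$ to $T^{-(d-1)}\min\{r,1/\eta\}$; this is the $(r\ell)^{-(d-1)}$ in \eqref{e:var}. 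It is this second-chaos variance that equals $1/\log T$ at $\bar r_T$. The linear term reaches $1/\log T$ only at $r\approx(\log T)^{1/(d+1)}/T$.

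\textbf{Consequence for (i).} Your discretisation over centres and a geometric grid of radii is fine and matches the paper's use of Lemma~\ref{l:stability}. The problem is the single-ball tail bound. Your ``remainder'' contains a second chaos $X_2$ with $\|X_2\|_2^2\asymp \mu^{-O(1)}/\log T$ at $r=\mu\bar r_T$. Hypercontractivity then gives only $\Pb(|X_2|>\eps)\le e^{-c\eps/\|X_2\|_2}=T^{-o(1)}$, which cannot absorb a union bound over $T^{O(1)}$ balls. In the monochromatic regime it would yield balance only above roughly $(\log T)^{2/(d-1)}/T$.
\begin{itemize}
\item The exponent you need is $(rT)^{d-1}\max\{1,r\eta\}$. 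This is an effective number of degrees of freedom, not $\|X_2\|_2^{-1}$.
\item In the paper it comes from the operator norm $I_B$ of $\Hc_B\hookrightarrow L^2(B)$, together with the spectral-cluster restriction bound $I_B^2\lesssim \min\{1,r\eta\}/N(T,\eta)$ of Proposition~\ref{prop:IB dep B}.
\item Your ``alternative'' of concentrating a Lipschitz smoothing of $H$ is the right idea; it is essentially Proposition~\ref{prop:conc upper bnd gen}. But it only works once you show the Cameron--Martin Lipschitz constant of $f\mapsto|B|^{-1}\int_B G_\delta(f-u)$ is $\delta^{-1}I_B|B|^{-1/2}$, with that bound on $I_B$. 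The trivial bound gives no decay at all.
\item The small-ball volume is then controlled by the same concentration applied to a tent function, not by Kac--Rice.
\end{itemize}
As written, this key input is missing.

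\textbf{The lower bound fails more seriously.} Since $\bar r_T T\ge(\log T)^{1/d}$ in every regime of $\eta$, at $r=\mu\bar r_T$ you have $\sigma^2\log T\lesssim\mu^{-(d+1)}(\log T)^{-1/d}\to0$. So the maximum of the linear terms over $T^{c}$ balls tends to zero. At $\mu\ubar r_\ell$ with $u=0$ and spherical harmonics on $\Sc^2$, $\sigma^2\log\ell\lesssim\mu^{-3}(\log\ell)^{-1/2}\to0$ as well.

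More fundamentally, your conditioning step is wrong. Forcing a linear statistic to its $T^{-c}$-quantile is a Cameron--Martin shift by $\sqrt{2c\log T}\,\hat g$, where $\|\hat g\|_{\Hc}=1$ and $\hat g$ is of order one on the ball. For spherical harmonics $\hat g=\pm K_\ell(x,\cdot)$ exactly, so the shift at the centre is $\pm\sqrt{2c\log T}$. The shift is therefore large, the nonlinear part at the selected ball is of order one rather than $o(1)$, and $H$ responds to the sign pattern of $\hat g$, not to its small mean.
\begin{itemize}
\item For $u\neq0$ this nonlinear response is exactly what creates the bias. A balanced oscillation of amplitude $\gg|u|$ occupies volume fraction $\approx\frac12$ above level $u$, instead of the typical $1-\Phi(u)$. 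This is the paper's level-barrier, Proposition~\ref{prop:lev-bar exist man}.
\item For $u=0$ the kernel is sign-balanced and produces no defect. This is why the paper must build the sign-barrier of Proposition~\ref{prop:sign-bar exist man}: three kernels centred at distance $\widetilde r\asymp r^2T$, mimicking the Euclidean $w_{t_0}$.
\item The Cameron--Martin cost of that barrier, $\|h\|_{\Hc}^2\asymp(rT)^{2(d-1)}\max\{1,r^2T\eta\}$, is what defines $\ubar r_T$. The paper then decouples $T^{\delta_1/4}$ separated balls by sprinkling.
\end{itemize}
Your proposal has no substitute for these barriers, and no chaos-variance computation produces $\ubar r_T$.
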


\vspace{2mm}

As mentioned in \S\ref{sec:rand waves}, Theorem \ref{thm:sign bal rand wav} allows for $\Mcc=\Sc^{d}$ and $\eta \equiv 1$, and so contains Theorem \ref{thm:sign-bal spher harm} as a particular case. See Corollary \ref{cor:as} below for an almost sure version of Theorem \ref{thm:sign bal rand wav} (i.).
\vspace{2mm}

If $\eta$ satisfies $\eta(T)=o(T)$, the random waves \eqref{eq:fTeta band-lim def} are called {\em monochromatic}, closest to `pure' eigenstates. Assuming in addition that $\eta(T) =o\big(T \cdot (\log T)^{-\frac{1}{d-1}}\big)$, the scales \eqref{eq:rT crit rad rand wav} in the monochromatic regime are 
\begin{equation}
\label{eq:rT crit monochrome}
\bar{r}_{T} = \frac{(\log{T})^{\frac{1}{d-1}}}{T} \quad \text{and}\quad \ubar{r}_{T} = \frac{(\log{T})^{\frac{1}{2(d-1)}}}{T},
\end{equation}
coinciding with the scales \eqref{eq:rl cric rad spher harm} for random spherical harmonics (with $T\sim \ell$). 

\vspace{2mm}
In general, the scales \eqref{eq:rT crit rad rand wav} {\em interpolate} between the monochromatic scales \eqref{eq:rT crit monochrome} and the scales 
\begin{equation*}
\bar{r}_{T}\asymp \frac{(\log{T})^{\frac{1}{d}}}{T} \quad \quad \ubar{r}_{T}\asymp \frac{(\log{T})^{\frac{1}{2d}}}{T}
\end{equation*} 
which are relevant in the regime of positively-banded waves satisfying $\eta(T) = c_{0}\cdot T$ with $0<c_{0}\le 1$ constant (alternatively $c_{0}=c_{0}(T)$ bounded away from $0$). The {\em crossover} between these two regimes occurs for {\em barely monochromatic} waves for which $\eta(T)  \asymp  T (\log{T})^{-\gamma}$ for some $0<\gamma < \frac{1}{d-1}$, where one has
\begin{equation*}
\bar{r}_{T} \asymp \frac{(\log{T})^{\frac{\gamma+1}{d}}}{T} \quad \quad \ubar{r}_{T} \asymp \frac{(\log{T})^{\frac{\gamma+1}{2d}}}{T}.
\end{equation*}
Equivalently to \eqref{eq:rT crit rad rand wav}, the scales $\bar{r}_{T}$ and $\ubar{r}_T$ may be defined implicitly as the respective solutions to the equations 
\begin{equation}
\label{eq:bar(r) implicit equation}
(\bar{r}_{T}T)^{d-1} \max\{1, \bar{r}_{T}\cdot \eta  \} = \log{T} \quad \quad  (\ubar{r}_{T}T)^{2(d-1)} \max\{1, \ubar{r}_{T}^2\cdot T \eta  \} = \log{T} ,
\end{equation} 
more natural in the proofs (see, e.g., \eqref{e:dev}). They are related by $\bar{r}_{T} \cdot T = (\ubar{r}_{T}\cdot T)^2$.

\subsection{Volume-bias concentration}

Theorem \ref{thm:sign bal rand wav} (and Theorem \ref{thm:sign-bal spher harm} as a particular case of Theorem \ref{thm:sign bal rand wav}) will follow from the concentration of the volume-bias of the restriction of $f_{T}(\cdot)$ 
to geodesic balls centred at a {\em fixed} point $x\in \Mcc$ of arbitrary radius. We believe these concentration results to be of independent interest. 

\vspace{2mm}
Recall that $(\Mcc,g)$ is a closed Riemannian $d$-manifold, and $\Dc_{T,u}(x;r)$ is the volume-bias defined in \eqref{eq:defect def lev} for the random wave $f_{T,\eta}(\cdot)$. In particular $\Dc_{T}(x;r)  = \Dc_{T,0}(x;r)$ is the defect. First, we present an upper bound for the defect concentration that will be used to infer Theorem \ref{thm:sign bal rand wav}(i.):

\begin{theorem}[Volume-bias concentration upper bound]
\label{t:cub}
There exists a number $C=C(\Mcc)>0$ sufficiently large, and for every $\eps >0$ a number $c=c(\Mcc,\eps) > 0$ sufficiently small, such that the following holds. For all $T \ge 1$ and $r \ge 1/T$ such that either (a) $\eta \in [C,T]$ or (b) $\Mcc = \Sc^{d}$ and $\eta \in [1,T]$, 
one has:
\begin{equation}
\label{eq:conc upper bnd}
\Pb( | \Dc_{T,u}(x;r) | > \eps ) <  e^{- c (r T)^{d-1} \max\{1, r \eta\}  } .
\end{equation}
uniformly w.r.t.\ $x\in\Mcc$ and $u \in \R$.
\end{theorem}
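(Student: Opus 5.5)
The plan is to prove \eqref{eq:conc upper bnd} by Gaussian concentration for Lipschitz functionals of the i.i.d.\ coefficients $a=(a_j)$ in \eqref{eq:fTeta band-lim def}. The difficulty is that the Heaviside function $H$ is not Lipschitz, so I will first smooth it at a scale $\delta=\delta(\eps)>0$ and control the error separately. Write $B=B_r(x)$. Let $h_\delta$ be a $\frac1\delta$-Lipschitz function with $h_\delta(t)=H(t)$ for $|t|\ge\delta$ and $|h_\delta|\le 1$, and let $\psi_\delta$ be a $\frac1\delta$-Lipschitz bump with $\id_{[-\delta,\delta]}\le \psi_\delta\le \id_{[-2\delta,2\delta]}$. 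Then
\[ |\Dc_{T,u}(x;r)| \le \Big|\tfrac{1}{|B|}\textstyle\int_B h_\delta(f_T-u) - \tau(u)\Big| + \tfrac{2}{|B|}\textstyle\int_B \psi_\delta(f_T-u). \]

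\textbf{Means.} By Proposition \ref{prop:asymp covar waves}, $\var(f_T(y))=1+o(1)$ uniformly; in case (a) this needs $\eta\ge C$ with $C$ large, and on the sphere the variance is exactly $1$. Since the standard Gaussian density is bounded, we get $|\E h_\delta(f_T(y)-u)-\tau(u)|\le C_0\delta$ and $\E\psi_\delta(f_T(y)-u)\le C_0\delta$, uniformly in $y$ and $u$. Choosing $\delta=\eps/(8C_0)$, it then suffices to show that each of the two functionals $F_1(a)=\frac{1}{|B|}\int_B h_\delta(f_T-u)$ and $F_2(a)=\frac{1}{|B|}\int_B\psi_\delta(f_T-u)$ deviates from its mean by more than $\eps/4$ with probability at most $e^{-c(rT)^{d-1}\max\{1,r\eta\}}$.

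\textbf{Lipschitz constants.} By Cauchy--Schwarz,
\[ |F_i(a)-F_i(a')|\le \frac{1}{\delta|B|}\int_B|f_T^a-f_T^{a'}| \le \frac{1}{\delta|B|^{1/2}}\|f_T^{a-a'}\|_{L^2(B)} \le \frac{\sigma}{\delta}\,|a-a'|, \]
where
\[ \sigma^2:=\frac{|\Mcc|}{N|B|}\,\big\|\big(\textstyle\int_B\varphi_i\varphi_j\big)_{i,j}\big\|_{op}=\frac{|\Mcc|}{N|B|}\,\|\id_B\Pi_{T,\eta}\id_B\|_{op}, \]
and $\Pi_{T,\eta}$ is the spectral projector onto $[T-\eta,T]$. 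The Gaussian concentration inequality then gives tail $2e^{-\eps^2\delta^2/(32\sigma^2)}$. The proof therefore reduces to the estimate
\[ \sigma^2\lesssim \big((rT)^{d-1}\max\{1,r\eta\}\big)^{-1}, \]
uniformly in $x$.

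\textbf{Bounding $\sigma^2$.} Recall $N\asymp T^{d-1}\eta$ and $|B|\asymp r^d$. If $r\eta\ge1$, the trivial bound $\|\id_B\Pi\id_B\|\le1$ already gives $\sigma^2\lesssim (T^{d-1}\eta r^d)^{-1}$, which is what we need. If $r\eta<1$, we need the genuinely local estimate $\|\id_B\Pi_{T,\eta}\id_B\|\lesssim \eta r$ for $r\ge 1/T$, and I expect this to be the main obstacle. My first attempt would be to dominate $\Pi_{T,\eta}$ by $\chi\big(\eta^{-1}(\sqrt{-\Delta}-T)\big)^2$ with $\chi$ Schwartz and $\hat\chi$ compactly supported. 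I would then write $\id_B\Pi\id_B\le \id_B\rho(\sqrt{-\Delta})\id_B$, where $\rho$ has Fourier transform supported in $|t|\lesssim r$ and height $\asymp \eta r$ near $T$; this is admissible because windows narrower than $1/r$ are indistinguishable on $B$. Then:
\begin{itemize}
\item use the Hörmander short-time wave parametrix, for times below $\inj(\Mcc)$, to show that the kernel of $\rho(\sqrt{-\Delta})$ at scale $r$ satisfies $|K(y,z)|\lesssim \eta r\,T^{d-1}\min\{1,(T|y-z|)^{-(d-1)/2}\}\cdot(\text{cut-off at }|y-z|\lesssim r)$;
\item bound its operator norm on $L^2(B)$ by $\eta r\,\|\id_B\Pi_{[T-1/r,T]}\id_B\|\le \eta r$, using that a window of width $1/r$ is again a bounded projection.
\end{itemize}
In the spherical case (b) with $\eta=1$, the same bound $\|\id_B\Pi_\ell\id_B\|\lesssim r$ can instead be obtained by comparing $\Pi_\ell$ with the sum of $\Pi_{\ell'}$ over $|\ell'-\ell|\le 1/r$, using the explicit Jacobi-polynomial asymptotics. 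Putting this together gives $\sigma^2\lesssim (rT)^{-(d-1)}$ when $r\eta<1$, and hence \eqref{eq:conc upper bnd} with $c\asymp\eps^4$, uniformly in $x$ and $u$.
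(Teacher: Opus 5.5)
Your overall architecture is correct, and on the probabilistic side it takes a genuinely different route from the paper's. The paper proves the abstract bound of Proposition~\ref{prop:conc upper bnd gen} directly from the Gaussian isoperimetric inequality in the RKHS norm. It shows that the ``unstable'' event, that $\{|f|<\eps/4\}$ fills more than a $2\eps$-fraction of $\Xc$, is exponentially unlikely. Off this event, any RKHS perturbation of norm $<s$ moves $\Fc$ by at most $3\eps$ (Markov plus $\|h\|_{L^2}\le I\|h\|_{\Hc}$), and the paper then compares median and mean. You instead mollify $H$ at scale $\delta\asymp\eps$ and dominate the mollification error by the second Lipschitz functional $F_2$, whose mean is $O(\delta)$ by the bounded density. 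You then apply Lipschitz Gaussian concentration with constant $\sigma/\delta$, where $\sigma^2=I_B^2/|B|$. Your $F_2$ plays the role of the paper's unstable event, so the mechanism is the same. Your packaging is shorter (no median--mean step, no case split on the size of $\eps$), at the cost of $\eps^4$ instead of $\eps^3$ in the exponent, which is immaterial. Both routes reduce to the same deterministic input $I_B^2\lesssim\min\{1,r\eta\}/N$, i.e.\ Proposition~\ref{prop:IB dep B}.

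The part that does not hold together is your derivation of that input when $r\eta<1$. The inequality $\id_B\Pi_{T,\eta}\id_B\le\id_B\rho(\sqrt{-\Delta})\id_B$ with $\sup\rho\asymp\eta r<1$ is not a consequence of functional calculus, which would force $\rho(\lambda_j)\ge1$ for all $\lambda_j\in[T-\eta,T]$. It is a genuinely local statement, and ``windows narrower than $1/r$ are indistinguishable on $B$'' is exactly what has to be proved. Once it is granted, your bullets add nothing: the second follows from $\sup|\rho|\lesssim\eta r$ alone, and a kernel bound for the time-truncated operator is not what is needed. What is needed is that the part of $\chi(\eta^{-1}(\sqrt{-\Delta}-T))^2$ coming from wave times $r\ll|t|\lesssim1/\eta$ has negligible kernel on $B\times B$. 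This follows by non-stationary phase in the short-time parametrix, since $d(y,z)\le 2r\ll|t|$ there; it is Sogge's argument. The paper does not redo it but imports the bound from \cite{ht20,sogge16}, and you can do the same. The simplest way is Sogge's unit-window estimate $\|\id_{[\lambda-1,\lambda)}(\sqrt{-\Delta})\,\id_{B_r(x)}\|^2_{L^2\to L^2}\lesssim r$ for $1/\lambda\le r<\inj(\Mcc)$. For $\lambda<1/r$ the trivial bound $|B|\sup_y\sum_{\lambda_j\in[\lambda-1,\lambda)}\varphi_j(y)^2\lesssim r^d(1+\lambda)^{d-1}\lesssim r$ suffices. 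Summing orthogonally over the $\lceil\eta\rceil$ unit windows covering $[T-\eta,T]$ then gives $\|\id_B\Pi_{T,\eta}\id_B\|\lesssim\eta r$. This also covers the sphere with $\eta=1$ directly, so no Jacobi-polynomial comparison is needed.

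One further point your proof shares with the paper's. In case (a) one only has $\var f_T(y)=1+O(1/\eta)$, so $\E H(f_T(y)-u)$ differs from $\tau(u)$ by an extra $O(1/\eta)$, uniformly in $u$. Your ``Means'' step drops this, as does the paper's identification of $\E[\Fc]$ with $\tau(u)$. It is harmless when $\eta\to\infty$, as in all applications, or if $C$ is allowed to depend on $\eps$.
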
 

To infer Theorem \ref{thm:sign bal rand wav}(ii.) we need the corresponding {\em lower} bound for the defect concentration. It will be more natural to work with the {\em uncentred} volume-bias \eqref{eq:uncentr vol bias def},
one benefit of which is that $u \mapsto \widetilde{\Dc}_{f,u}(x;r)$ is non-increasing. We infer Theorem \ref{thm:sign bal rand wav}(ii.) in the case of non-zero level $u \neq 0$ from the following bound:

\begin{theorem}[Volume-bias concentration lower bound I]
\label{t:clb}
For every $\eps > 0$ there exist a number $C = C(\Mcc,\eps) > 0$ sufficiently large such that, for all $T \ge 1$, $r \ge \frac{C}{T}$, such that either (a) $ \min\left\{ \eta, \frac{1}{r} \right\} > C \cdot (rT)^{(d-1)/2}$ or (b) $\Mcc = \Sc^d$ and $r < 1/C$, one has:
\[   \Pb(  \widetilde{\Dc}_{T,u}(x;r)  > -\eps   ) >   e^{- C (1+u^2) (r T)^{d-1}  \max\{1, r \eta\}  } . \]
 uniformly w.r.t.\ $x\in\Mcc$ and $u \in \R$.
 \end{theorem}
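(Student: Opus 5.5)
Since $\widetilde{\Dc}_{T,u}(x;r) > -\eps$ means that $f_T \ge u$ on at least a proportion $(1-\eps)/2$ of $B_r(x)$, the plan is a Cameron--Martin (change of measure) argument. We shift $f_T$ by a deterministic function $h$ in its reproducing kernel Hilbert space $\Hc_T$, chosen so that $h \ge u + A$ on most of $B_r(x)$, with $A$ a large constant. We then pay the Gaussian price $e^{-\|h\|_{\Hc_T}^2/2}$ (up to constants) for $f_T$ to look like $h + (\text{noise})$. The exponent $(rT)^{d-1}\max\{1,r\eta\}$ should emerge as the minimal RKHS norm squared of a function which is of order one on $B_r(x)$. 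It is comparable to the "number of effective degrees of freedom" of $\Hc_T$ localised to $B_r(x)$: roughly $(rT)^{d-1}$ for a single frequency shell, thickened by $r\eta$ when the band is wider than $1/r$.

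\textbf{Step 1.} Construct $h = h_{x,r}$ in the span of $\{\varphi_j : \lambda_j \in [T-\eta,T]\}$ such that $h \ge 1$ on $B_{(1-\delta)r}(x)$ outside a set of relative volume $\le \eps/4$, and
\[ \|h\|_{\Hc_T}^2 \le C (rT)^{d-1}\max\{1,r\eta\}. \]
A natural choice is to project onto $\Hc_T$ a smooth bump $\chi_r$ equal to $1$ on $B_r(x)$. The RKHS norm is $\frac{N}{|\Mcc|}\sum_j |\langle \chi_r,\varphi_j\rangle|^2$ for the projection $P\chi_r$. By local Weyl law / spectral projector bounds (the asymptotics of Proposition~\ref{prop:asymp covar waves} and its kernel version), the window $[T-\eta,T]$ captures an $L^2$ mass of $\chi_r$ of order $r^{d}\cdot \frac{\min\{\eta, 1/r\}}{T}\cdot (rT)^{-1}\cdots$. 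This is where the hypothesis $\min\{\eta,1/r\} > C(rT)^{(d-1)/2}$ enters.

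A projected bump is small, however, so it is cleaner to take $h(y) = \int_{B_r(x)} K_T(y,z)\,dz / \kappa$, normalised by its typical value. Then $\|h\|^2 = \kappa^{-2}\iint_{B_r\times B_r} K_T$, and $\E[\Dc]$-type variance computations give $\iint K_T \asymp r^{2d}/((rT)^{d-1}\max\{1,r\eta\})$. On the other hand, $h(y)$ for $y \in B_r$ equals $\kappa^{-1}\int_{B_r}K_T(y,z)\,dz$, which in general oscillates. The sign-balance obstruction is exactly that such integrals are only $\gtrsim$ average on part of the ball. To handle this, I would instead work on a mesh: partition $B_r$ into $\asymp (rT)^{d}$ cells of size $1/T$, and take $h$ as a sum of localized kernel peaks $K_T(\cdot,z_i)$ with $\pm$-corrections chosen by least squares (dual problem). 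The norm bound then follows from a lower bound on the smallest eigenvalue of the relevant Gram matrix. Getting the degree-of-freedom count $(rT)^{d-1}\max\{1,r\eta\}$ here, rather than the trivial $(rT)^d$, is the main obstacle. I would try first to compute $h$ as $\chi_r$ convolved against the inverse of the spectral density restricted to the window, using the explicit Bessel-type asymptotics of $K_T$ at scale $r$ (or the Gegenbauer asymptotics on $\Sc^d$, case (b)).

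\textbf{Step 2.} Given $h$, set $g = (|u|+A)h$. Then $f_T - g$ has the law of $f_T$. By Cameron--Martin and Cauchy--Schwarz (or the standard inequality $\Pb(f\in E) \ge \Pb(f - g\in E')\,e^{-\|g\|^2/2 - \sqrt{2\|g\|^2\log(2/p)}}$ with $p = \Pb(f_T - g \in E')$), it suffices to show that with probability at least $1/2$ the unshifted field satisfies $f_T \ge -A$ on all but an $\eps/4$-fraction of $B_r(x)$. This follows from Markov's inequality, since $\E|\{y\in B_r : f_T(y) < -A\}| = \Phi(-A)|B_r| \le \eps|B_r|/8$ for $A$ large, using $\var f_T \sim 1$ (Proposition~\ref{prop:asymp covar waves}). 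On the intersection of the two events, $f_T + g \ge u$ on a fraction at least $1-\eps/2$ of the ball, hence $\widetilde{\Dc}_{T,u} > -\eps$. The cost is $e^{-C(1+u^2)\|h\|^2}$, which yields the claim. Uniformity in $x$ and $u$ is automatic from the uniform kernel asymptotics.
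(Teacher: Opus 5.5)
Your Step 2 is sound. It is the paper's mechanism: shift by an RKHS element (via Cameron--Martin, or via the decomposition $f_T=Z\widehat h+\tilde f$) and control the noise by Markov's inequality. Step 1, however, is a genuine gap, and not merely an unfinished computation: you ask for far more than the statement needs. You correctly note that $\widetilde{\Dc}_{T,u}(x;r)>-\eps$ only requires $f_T\ge u$ on a proportion slightly above $(1-\eps)/2$ of $B_r(x)$. Yet you then demand $h\ge 1$ on all but an $\eps/4$-fraction of the ball, with $\|h\|_{\Hc}^2\le C(rT)^{d-1}\max\{1,r\eta\}$.

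Since $h$ does not depend on $u$, your Step 2 with $u=0$ would give $\Pb(\widetilde{\Dc}_{T,0}(x;r)\ge 1-\eps)\ge e^{-C(rT)^{d-1}\max\{1,r\eta\}}$. That is a sign-barrier of optimal norm. Fed into the decoupling argument of \S~\ref{sec:main res proofs}, it would show the defect is unbalanced already at scale $\mu\bar r_T$, closing the gap between $\ubar r_T$ and $\bar r_T$ that the paper explicitly leaves open. The best sign-barrier the paper can build (three kernels, Proposition~\ref{prop:sign-bar exist man}) has norm squared $(rT)^{2(d-1)}\max\{1,r^2T\eta\}$. The authors moreover expect the defect to concentrate more strongly than in Theorem~\ref{t:cub}, which would rule out your $h$ altogether. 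None of your candidate constructions addresses this.

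Your heuristic $\iint_{B_r\times B_r}K_T\asymp r^{2d}/((rT)^{d-1}\max\{1,r\eta\})$ is also off. It transplants the scaling of the nonlinear statistic $\Dc_u$ ($u\neq0$) onto a linear one. For a narrow band, the linear average of $f_T$ over $B_r$ has variance of order at most $(rT)^{-(d+1)}$. Frequency-$T$ functions have almost vanishing local averages, which is exactly why near-total positivity on $B_r$ is expensive.

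The missing idea is to use the slack of one half. Take a single rescaled kernel with a narrowed band, $h=c_d^{-1}(rT)^{(d-1)/2}K_{T,\eta'}(x,\cdot)$ with $\eta'=\frac{1}{C_0}\min\{\eta,1/r\}$. By Proposition~\ref{prop:asymp covar waves}(ii.), on $B_r(x)$ away from a negligible neighbourhood of $x$, $h(y)$ equals $(r/d(x,y))^{(d-1)/2}\ge 1$ times $\cos(T\,d(x,y)+\gamma_d)+O(1/C_0)$. The errors $\eta' d(x,y)$ and $1/(d(x,y)T)$ are small by the choice of $\eta'$ and by $r\ge C/T$. The error $(rT)^{(d-1)/2}/\eta'$ is small exactly by hypothesis (a). On $\Sc^d$, Proposition~\ref{prop:asymp covar spher} has no such term, which is why (b) only needs $r<1/C$.

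The radial cosine exceeds a small level $w=w(\eps)>0$ on a proportion close to $\arccos(w)/\pi$, just below one half, so $\widetilde{\Dc}_{h;w}(x;r)>-\eps/2$. By Weyl's law, $\|h\|_{\Hc}^2\lesssim (rT)^{d-1}N(T,\eta)/N(T,\eta')\asymp(rT)^{d-1}\max\{1,r\eta\}$. Your Step 2 then goes through verbatim with $g=(|u|+A)h/w$, at cost $e^{-C(1+u^2)(rT)^{d-1}\max\{1,r\eta\}}$. This is precisely the paper's level-barrier (Proposition~\ref{prop:lev-bar exist man}).
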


Observe that, for $u > 0$, $\E[\widetilde{\Dc}_{f;u}(x;r)]=\tau(u) > 0$. Hence, one could choose $\eps \in (0, \tau(u)) $ sufficiently small and apply Theorem \ref{t:clb} to obtain a lower bound on $\Pb( |\Dc_{T,u}(x;r)| > \eps )$ that matches the order of the upper bound in Theorem \ref{t:cub}. By contrast, for the defect ($u=0$), one has $\tau(u) = 0$, and thus Theorem \ref{t:clb} only asserts a lower bound on $\Pb( |\Dc_{T,u}(x;r)| < \eps )$, useless for the purpose of proving Theorem \ref{thm:sign bal rand wav}(ii.). To handle this regime we establish the following bound which is of smaller order and applies to a narrower range of parameters:

\begin{theorem}[Volume-bias concentration lower bound II]
\label{t:clb2}
There is an absolute constant $\eps > 0$ and a number $C = C(\Mcc) > 0$ so that, for all $T \ge 1$, $r \ge \frac{C}{T}$, such that either (a) $ \min\{ \eta, 1/  (r^2 T) \} > C (rT)^{d-1}$ or (b) $\Mcc = \Sc^d$ and $r < 1/ ( C \sqrt{T})$, one has:
\[   \Pb( \widetilde{\Dc}_{T,u}(x;r)   > \eps   ) >  e^{- C (1+u^2) (r T)^{2(d-1)} \max\{ 1, r^2 T \eta \} }   \]
 uniformly w.r.t.\ $x\in\Mcc$ and $u \in \R$. 
 \end{theorem}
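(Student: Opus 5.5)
Throughout, write $f = f_{T,\eta}$ and $B = B_r(x)$. The strategy is a Gaussian change of measure (Cameron--Martin): find a deterministic function $h$ in the Cameron--Martin space of $f$ for which $f+h$ has $\widetilde{\Dc}_{u}(x;r) > \eps$ with probability bounded below. The Cameron--Martin formula then gives
\[ \Pb(f \in A) \ge \Pb(f+h\in A)\, e^{-\|h\|_{H}^2/2 - O(\|h\|_H)} . \]
So the claim reduces to two things: a bound $\|h\|_H^2 \lesssim (1+u^2)(rT)^{2(d-1)}\max\{1,r^2T\eta\}$, and a proof that the shifted field is positive on more than a $(1+\eps)/2$ fraction of $B$ with probability bounded below.

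\textbf{Why the shift is small.} Take the $u=0$ case first. The point is that $h$ need \emph{not} be large on $B$. It suffices that $h$ is comparable to $1$ on a positive fraction of $B$: shifting up by a bounded amount tilts the sign distribution of an order-one Gaussian. The natural candidate is $h = a\, K(\cdot, x)$, the covariance kernel itself. Then $\|h\|_H^2 = a^2 K(x,x) \asymp a^2$, which is cheap, but $K(\cdot,x)$ decays like $(T d(\cdot,x))^{-(d-1)/2}$ and oscillates. More efficient is a kernel average
\[ h = a \int_{B_{\rho}(x)} K(\cdot,y)\,dy \Big/ \Big(\int\!\!\int K\Big)^{1/2} \cdot (\dots) . \]
Its $H$-norm is controlled by the double integral $\int_{B}\int_{B} K(y,z)\,dy\,dz$, i.e. by the variance of $\int_B f$.

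\textbf{Key computation.} Using the asymptotics of the covariance (Proposition \ref{prop:asymp covar waves}, with the sphere handled via Hilb-type asymptotics), compute $\var(\int_B f)$. Because of the oscillation, its size relative to $|B|^2$ is about $(rT)^{-(d-1)}\max\{1,r\eta\}^{-1}$, the same quantity driving Theorem \ref{t:cub}. Choose $h = c\,\E[f \mid \int_B f = s]$ with $s$ picked so that the conditional mean makes $h$ of order $1$ on average over $B$. That requires $s \asymp |B|$, and the cost is then
\[ \|h\|_H^2 = \frac{s^2}{\var(\int_B f)} \asymp (rT)^{d-1}\max\{1,r\eta\} . \]
This is the exponent in Theorem \ref{t:clb}.

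For Theorem \ref{t:clb2} the rate is squared, which signals that the mean of $h$ over $B$ being order one is not enough: $h$ oscillates, and its oscillation does not produce net positivity once passed through the nonlinear sign function. I would therefore aim instead for $h \ge 1$ pointwise on a fixed fraction of $B$. The plan is to pick $h = \sum_i c_i K(\cdot,y_i)$ over a net of $\asymp (rT)^{d}$ points and estimate $\|h\|_H^2$ via the Gram matrix. Alternatively, and this is what I would try first, use that the spectral projector onto the window restricted to $B$ has a degrees-of-freedom count $\asymp (rT)^{d-1}\max\{1,r\eta\}$. Approximating the constant function on $B$ by elements of the window, with the conditions $\eta > C(rT)^{d-1}$ and $r^2T<\dots$ ensuring the projector is nearly sharp, yields a cost equal to the square of that count.

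\textbf{Finishing.} Given the shift, the conclusion goes as follows. With $h \ge 1+|u|$ on a subset $B'\subseteq B$ of proportion $p$, we have
\[ \E[\widetilde{\Dc}_u \text{ for } f+h] \ge \tau(u) + p\,c(u) . \]
Theorem \ref{t:cub}, or simply a variance bound for $\widetilde{\Dc}$ of the unshifted field (variance $\to 0$ since $rT \ge C$), then gives concentration of $\widetilde{\Dc}_u$ of $f+h$ about its mean, so the event has probability at least $1/2$. The factor $(1+u^2)$ comes from scaling $h$ by $1+|u|$.

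\textbf{Main obstacle.} The crux is the $H$-norm estimate for a shift that is pointwise large on a positive fraction of $B$. This needs quantitative control of the window's reproducing kernel on $B$ (sharpness of the local Weyl law), and on general manifolds that is where the hypotheses $\eta > C(rT)^{d-1}$ and $1/(r^2T)>C(rT)^{d-1}$ enter.
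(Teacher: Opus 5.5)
Your Cameron--Martin framing is sound and is the same mechanism as the paper's. There $f_T\stackrel{d}{=}Z\widehat h+\widetilde f$ with $\widehat h=h/\|h\|_{\Hc}$, one pays $e^{-c(1+u^2)\|h\|_{\Hc}^2}$ for the event $\{Z\ge 2v\|h\|_{\Hc}\}$, and $\widetilde f$ is controlled by Markov's inequality. The gap is the choice of $h$, which is the step that carries the whole theorem.

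Your finishing inequality $\E[\widetilde\Dc_{f+h;u}(x;r)]\ge\tau(u)+p\,c(u)$ holds only if $h\ge 0$ on $B\setminus B'$, and for an oscillating shift it is false. For example, $h=aK_T(\cdot,x)$ exceeds $1+|u|$ on a positive proportion of $B$. Yet its positive and negative lobes cancel, so $\E[\widetilde\Dc_{f+h;0}(x;r)]=o(1)$ as $rT\to\infty$; this is why the paper calls the reproducing kernel perfectly sign-balanced and useless as a sign-barrier.

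Unless the window reaches down to low frequencies (e.g.\ $\eta\asymp T$), shifts that are nonnegative or nearly constant on $B$ are not available cheaply, and sometimes not at all. In case (b) with $\eta=1$, every $h\in\Hc$ obeys the Funk--Hecke identity $|\partial B_\rho(y)|^{-1}\int_{\partial B_\rho(y)}h=G_{\alpha;\ell}(\cos\rho)\,h(y)$. Choosing $\rho\asymp1/\ell$ with $G_{\alpha;\ell}(\cos\rho)<0$ shows that $h\ge0$ on $B_r(x)$ forces $h\equiv0$. So ``approximating the constant function on $B$ by elements of the window'' cannot be the route. The claimed cost, ``the square of the degrees-of-freedom count'', is also unsupported: it would be $(rT)^{2(d-1)}\max\{1,r\eta\}^2$, not the exponent $(rT)^{2(d-1)}\max\{1,r^2T\eta\}$ in the statement. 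Your Gram-matrix variant only moves the problem. The values you prescribe on the net must themselves form a sign-imbalanced profile that the window can realise cheaply, which is exactly the missing construction.

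What you need is not positivity but a \emph{net} imbalance at a small positive level: $\widetilde\Dc_{h;\eps}(x;r)>\eps$. This means $\{h\ge\eps\}$ occupies more than a $\frac{1+\eps}{2}$-fraction of $B$, while $h$ may be very negative elsewhere. On $\{Z\ge 2v\|h\|_{\Hc}\}$, together with the Markov bound on $\widetilde f$, this gives $\widetilde\Dc_{T,\eps v}(x;r)>\eps/2$.

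The paper builds such an $h$ in Proposition \ref{prop:sign-bar exist man}:
\begin{enumerate}
\item It starts from the Euclidean barrier $w_{t_0}$, a superposition of three plane waves along the cube roots of unity with weights $1,t_0,t_0$. Its defect $D(t)$ on the hexagonal period cell satisfies $D(0)=D'(0)=0$ and $D''(0)=2/\pi>0$, by Lemma \ref{lem:defect derivatives}.
\item It realises this in $\Hc$ by three kernels $K_{T,\eta'}$ centred at distance $\widetilde r\asymp r^2T$ from $x$, with $\eta'\asymp\min\{\eta,1/\widetilde r\}$ and amplitude $(\widetilde rT)^{(d-1)/2}$. This distance makes the wavefront-curvature error $r^2/\widetilde r\ll 1/T$, so the kernels act as plane waves on $B_r(x)$.
\item The cost is $(\widetilde rT)^{d-1}\max\{1,\widetilde r\eta\}=(rT)^{2(d-1)}\max\{1,r^2T\eta\}$. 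This is the degrees-of-freedom count at the enlarged scale $r^2T$, and it is the real source of the squared rate.
\end{enumerate}
Hypotheses (a) and (b) are exactly what keep the kernel asymptotics valid at distance $\widetilde r$. On general $\Mcc$ one needs $\eta'\gg(\widetilde rT)^{(d-1)/2}\asymp(rT)^{d-1}$, and on $\Sc^d$ one needs $\widetilde r\lesssim1$.
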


Although Theorem \ref{t:clb2} is applied below for the purpose of studying the defect, the statement for {\em positive} levels $u > 0$ (chosen arbitrarily as $u=1$ within the proof of Theorem \ref{thm:sign bal rand wav}(ii.)) is a stronger result than the same with $u=0$. The stronger result is required, as the level $u$ will be slightly adjusted within the proof of Theorem \ref{thm:sign bal rand wav}(ii.) to exploit a certain `sprinkled decoupling' technique.

\vspace{2mm}
Note that theorems \ref{t:clb} and \ref{t:clb2} are only effective for general manifolds on {\em mesoscopic} scales: in the case of Theorem \ref{t:clb}, on scales smaller than 
$$r = T^{-(d-1)/(d+1)} = T^{2/(d+1)} / T = o(1) , $$ 
and for Theorem \ref{t:clb2}, scales smaller than
 $$ r = T^{-d/(d+1)} = T^{1/(d+1)} / T = o(1) . $$
For the sphere, Theorem \ref{t:clb} is effective on {\em all} mesoscopic scales $r = o(1)$, whereas Theorem \ref{t:clb2} is only effective on {\em mesoscopic} scales $r =  o(T^{1/2}/T)$. 

\vspace{2mm}
While we are particularly interested in the defect, for the volume-bias at non-zero levels $u \neq 0$ theorems \ref{t:cub} and \ref{t:clb} together show that, uniformly over geodesic balls at mesoscopic scales, the volume-bias has (upper) large deviations of order 
\begin{equation}
\label{e:dev}
 - \log \Pb( \Dc_{T,u}(x;r)  > \eps ) \asymp  (rT)^{d-1} \max\{1, r\eta\} .
 \end{equation}
On general manifolds we prove this only for sufficiently small mesoscopic scales, whereas on the sphere this holds at \textit{all} mesoscopic scales.

\smallskip
For example, in the case of unit energy band $\eta \approx 1$ (e.g.\ the random spherical harmonics), the deviations of the volume-bias ($u \neq 0$) are of minimal order 
\[ - \log \Pb( \Dc_{T,u}(x;r)  ) \asymp   (r T)^{d-1} , \]
whereas for the positively-banded case $\eta \asymp T$, the deviations are of maximal order 
\[ - \log \Pb( \Dc_{T,u}(x;r) > \eps ) \asymp   (r T)^d  . \]
In the intermediate monochromatic case $\eta \rightarrow \infty$ but $\eta= o(T)$, the deviations exhibit crossover for radii exceeding $1/\eta$.

\subsubsection{{\bf Almost sure sign-balance}}
\label{sec:almost sure sign bal}

The proof of Theorem \ref{thm:sign bal rand wav}(i.) yields a rate of convergence for $\Bc_{T}\left(\mu \cdot r_{T}\right)$ in \eqref{eq:balance -> 0 prob} that is at least polynomial, of degree that can be made arbitrarily large by taking $\mu$ sufficiently large. Therefore a straightforward application of the Borel-Cantelli lemma allows for upgrading the convergence mode to almost sure convergence, regardless of how the $f_{T}$ are drawn for different $T$ (i.e.\ independently or not):

\begin{corollary}[Almost sure sign-balance]
\label{cor:as}
Recall that for $u \in \R$, the level-imbalance $\Bc_{T,u}(r)$ of the random waves is defined in \eqref{eq:sign-imbal def waves}. Assume that either (a) $\eta(T)\rightarrow \infty$ or (b) $\Mcc=\Sc^{d}$, and let $\bar{r}_T$ be given by \eqref{eq:rT crit rad rand wav}. Then if $\mu > 0$ is sufficiently large, as $T \to \infty$, one has 
\begin{equation}
\label{eq:vol imbal->0 a.s.}
\sup\limits_{r\ge\mu\cdot \bar{r}_{T}}\Bc_{T,u}(r) \to  0   \qquad \text{almost surely.} 
\end{equation}
\end{corollary}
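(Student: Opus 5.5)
The corollary follows from a quantitative form of Theorem \ref{thm:sign bal rand wav}(i.) and the Borel--Cantelli lemma. First we need a polynomial tail bound, and we obtain it from Theorem \ref{t:cub} by a union bound over a net. Fix $\eps>0$. For $r\ge \mu\bar r_T$ we have, by the implicit equation \eqref{eq:bar(r) implicit equation} and monotonicity in $r$, that $(rT)^{d-1}\max\{1,r\eta\}\ge \mu^{d-1}\log T$. Theorem \ref{t:cub} then gives, for each fixed $x$ and $r$,
\[ \Pb(|\Dc_{T,u}(x;r)|>\eps/2)\le T^{-c(\eps/2)\mu^{d-1}}. \]

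To pass to the supremum over $x\in\Mcc$ and $r\ge\mu\bar r_T$, we discretise. Take a $T^{-K}$-net of points $x_i$ and radii $r_k$ in $[\mu\bar r_T,\inj(\Mcc))$, of cardinality $O(T^{K(d+1)})$. For radii $r\ge \inj(\Mcc)$, we replace the balls by balls of a fixed radius, or note that $\Bc$ at macroscopic scales is controlled by finitely many smaller balls. Since $|B_r(x)\triangle B_{r'}(x')|/|B_r(x)|\lesssim (|x-x'|+|r-r'|)/r$ and $r\ge 1/T$, the defect changes by at most $O(T^{1-K})<\eps/2$ between net points. This holds deterministically, because $|H|\le1$. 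Hence
\[ \Pb\Bigl(\sup_{r\ge\mu\bar r_T}\Bc_{T,u}(r)>\eps\Bigr)\le C\,T^{K(d+1)-c(\eps/2)\mu^{d-1}}, \]
which is $\le T^{-2}$ once $\mu$ is large, depending on $\eps$.

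The main obstacle is that $\mu$ in the corollary must not depend on $\eps$, while $c(\eps)\to0$ as $\eps\to0$. We handle this as follows. A fixed $\mu$ yields convergence for a fixed $\eps_0$. For smaller $\eps$, we use larger radii: for $r\ge \mu'\bar r_T$ the bound above holds with $\mu'$. On the intermediate range $\mu\bar r_T\le r\le\mu'\bar r_T$, we apply Theorem \ref{t:cub} directly and note that the exponent still grows like $c(\eps)\mu^{d-1}\log T$. The resolution is to choose $\mu$ large enough that the polynomial rate is summable for the given $\eps$, and to interpret the corollary's ``$\mu$ sufficiently large'' accordingly. Alternatively, one can let $\eps=\eps_T\to0$ slowly, say $c(\eps_T)\mu^{d-1}\ge 2K(d+1)+4$ only eventually. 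If the dependence $c(\eps)\asymp\eps^2$ from the proof of Theorem \ref{t:cub} is available, one takes $\eps_T=(\log\log T)^{-1}$ and absorbs the loss, since the exponent $\eps^2(rT)^{d-1}\max\{1,r\eta\}$ still dominates $\log T$ when $\mu$ grows slightly. We will try this route first.

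Finally, we apply Borel--Cantelli. The parameter $T$ is continuous, so we restrict to the countable set of relevant $T$: spectral values, or the integers $T=n$. Between consecutive integers, monotonicity of the family on the sphere is automatic. In general we take $T_n=n$ and note that $f_T$ is only defined for the given sequence. Summability of $n^{-2}$ then gives that almost surely $\sup_{r\ge\mu\bar r_T}\Bc_{T,u}(r)\le\eps$ for all large $T$. This argument needs no independence between different $T$, because Borel--Cantelli only uses the first moment. Taking $\eps=1/m$ along a countable sequence, with the adjustment above, yields \eqref{eq:vol imbal->0 a.s.}.
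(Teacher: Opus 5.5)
Your overall architecture matches the paper's: a polynomial tail for $\sup_{r\ge\mu\bar r_T}\Bc_{T,u}(r)$ from Theorem \ref{t:cub} plus a union bound over a net, then Borel--Cantelli with no independence across $T$. Your uniform $T^{-K}$-net of size $O(T^{K(d+1)})$ replaces the paper's multiscale $\delta r_i$-nets and Lemma \ref{l:stability}. It is controlled by the elementary bound $|B_r(x)\triangle B_{r'}(x')|\lesssim r^{d-1}(d(x,x')+|r-r'|)$. This is a legitimate substitute and only costs a larger $\mu$.

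The genuine gap is in the Borel--Cantelli step. The limit is over real $T\to\infty$, and $f_{T,\eta(T)}$ changes whenever $T$ or $T-\eta(T)$ crosses an eigenvalue. Restricting to $T=n$, or saying $f_T$ is ``only defined for the given sequence'', proves a different statement in case (a). There is also no monotonicity in $T$ to interpolate between integers, since $\eta(T)$ is arbitrary.

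If you instead sum over all distinct fields, a $T^{-2}$ tail is far too weak. A window $[S,S+1]$ contains $O(S^{d-1})$ eigenvalues for the upper endpoint. Since $\eta$ is uncontrolled, the lower endpoint can sit in any of $O(S^{d})$ gaps. So there are up to $O(S^{2d-1})$ distinct fields with $T\in[S,S+1]$, and $\sum_S S^{2d-1}S^{-2}=\infty$.

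This count is the missing idea. The paper uses the freedom in $\mu$ to make the tail $O(T^{-2d-\xi}\log T)$: it takes $c\mu^{d-1}>3d$ with its $O(T^d\log T)$ net. With your net you would need $c\mu^{d-1}>K(d+1)+2d$. It then union-bounds over the $O(S^{2d-1})$ fields per window, getting $O(S^{-1-\xi}\log S)$, which is summable over integers $S$. Integer indexing is harmless only in case (b), with the convention $T=\ell\in\Z$.

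Your worry that $c=c(\Mcc,\eps)\to0$ as $\eps\to0$ is legitimate, and the paper's proof does not address it either. The $c$ in \eqref{e:claim} is inherited from Theorem \ref{t:cub} and depends on $\eps$. What the argument actually yields is your fallback reading: for every $\eps>0$ there is $\mu(\eps)$ such that, almost surely, $\limsup_{T\to\infty}\sup_{r\ge\mu\bar r_T}\Bc_{T,u}(r)\le\eps$.

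The route you say you will try first does not work. With $\mu$ fixed and $\eps_T\to0$, the exponent at $r=\mu\bar r_T$ is $c(\eps_T)\mu^{d-1}\log T=o(\log T)$. This holds whether $c(\eps)\asymp\eps^2$ or, as the proof of Proposition \ref{prop:conc upper bnd gen} actually gives, $c(\eps)\asymp\eps^3$. Such an exponent cannot beat the polynomial size of any net, let alone the $S^{2d-1}$ count. ``Letting $\mu$ grow slightly'' instead proves a different statement, about $\sup_{r\ge\mu_T\bar r_T}$ with $\mu_T\to\infty$. For the same reason, your final step ``take $\eps=1/m$'' needs one $\mu$ that works for every $m$, and neither your argument nor the paper's supplies it.
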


In fact, by taking a sufficiently tight net $\{u_{j}\}\subseteq \R$, and using the union bound, one could even take a supremum w.r.t.\ the level, i.e.\ the supremum on the l.h.s.\ of \eqref{eq:vol imbal->0 a.s.} could be replaced by 
$\sup\limits_{\substack{r\ge\mu\cdot \bar{r}_{T},\, u\in\R}}\Bc_{T,u}(r) $. The proof of this latter claim is left to the reader.

\subsection{Outline of the rest of the paper}
In \S~\ref{s:outline} we provide more background on our results, and discuss some elements of the proof. In \S~\ref{sec:covar kern} we begin the proof of our results by performing an asymptotic analysis of the covariance (reproducing) kernel of the random waves. In \S~\ref{sec:conc bnd proof 1} we prove the upper bound for the defect concentration in Theorem \ref{t:cub}. In \S~\ref{sec:conc bnd proof} we prove the corresponding lower bounds for the concentration of the volume-bias in theorems \ref{t:clb}-\ref{t:clb2}. In \S~\ref{sec:main res proofs} we complete the proof of the principal result of the manuscript, namely Theorem \ref{thm:sign bal rand wav} (containing Theorem \ref{thm:sign-bal spher harm} as a particular case), as well as prove Corollary \ref{cor:as}. The appendix contains the proof of two results: Proposition \ref{prop:no scale increase} and the auxiliary Lemma~\ref{lem:defect derivatives}.

\subsection*{Conventions}

\begin{enumerate}[\textbullet]

\item All random variables are defined on a common probability space $(\Omega,\Fcc,\prob)$. For random variables $\{X_{n}\}_{n \in \mathbb{N}}$ and $X$, $X_{n} \xrightarrow{P} X$ denotes convergence in probability with respect to $\prob$. Alhough the convergence in probability of Theorem \ref{thm:sign bal rand wav}(i.) is with respect to a continuous parameter $T\rightarrow \infty$, it is locally constant, and only jumps if either $T$ or $T-\eta$ is an energy level for $\Mcc$, hence also falls within the scope of a discrete sequence of random variables.

\item For a closed manifold $(\Mcc,g)$, $\inj(\Mcc)$ will stand for the injectivity radius of $\Mcc$. Given $x\in\Mcc$ and $0<r<\inj(\Mcc)$, we denote $B_{r}(x) := \{ y\in\Mcc:\: d_{g}(x,y)<r\}$ to be the geodesic (metric) ball, where $d_{g}(\cdot,\cdot)$ is the geodesic metric on $\Mcc$. We stress that the radius is always assumed to be smaller than $\inj(\Mcc)$. For a domain $\Dc\subseteq \Mcc$ (or $\Dc\subseteq \R^{d}$) the notation 
$|\Dc|$ will designate the $d$-volume measure of $\Dc$ (resp.\ the Lebesgue measure of $\Dc$).

\item We reserve $c_{d} > 0$ to designate a dimensional constant that, in general, varies throughout the text.

\item Given two positive expressions $A,B$ depending on some parameter (e.g.\ functions of $x$), $A=O(B)$ and $A\ll B$ both mean that there exists some constant $C>0$ so that $A\le C\cdot B$, and `$\asymp$' means that both $\ll$ and $\gg$ hold, i.e.\ $\frac{1}{C}\cdot B<A<C\cdot B$ for $C>0$ sufficiently large.
\end{enumerate}

\subsection*{Acknowledgements}
We are grateful to A. Logunov for sharing the results of the forthcoming work \cite{LoNa}, and to L. Polterovich, P. Sarnak, M. Sodin, and N. Yesha for many stimulating conversations. We deeply appreciate the interest they have all shown in this research. S.M. is supported by the Australian Research Council Future Fellowship FT240100396.

\medskip

\section{Discussion}
\label{s:outline}

\subsection{{\bf Background and significance of the results}}
\label{sec:back sig}

\subsubsection{Limit theory for the defect and volume-bias}
The study of the defect and volume-bias of random Laplace eigenfunctions has previously focused mainly on their limit theory; see \cite{mw11a,mw11b,mw14} for random spherical harmonics on $\Sc^2$, \cite{MaRo15} for generalisations to $d \ge 2$, \cite{kwy21} for random toral eigenfunctions, and \cite{bgs02} for Euclidean waves.
To illustrate what is known, the defect of the random spherical harmonics $H_{\ell}$ in \eqref{eq:Tl spher harm d>2 def} (here $\Mcc = \Sc^d$ with $d \ge 2$) on geodesic balls satisfies the following \cite{mw11a,mw11b,MaRo15}: 
for every fixed $u \in \R$, $x \in \Mcc$, and any sequence of radii $r = r_T$ above Planck scale (that is, satisfying $r \cdot T \to \infty$ as $T \to \infty$), it holds that, as $T \to \infty$
\begin{equation}
\label{e:var}
 \textrm{Var}( \Dc_{\ell,u}(x;r) ) \sim \begin{cases} 
 \frac{c_{d,u}}{(r\ell)^{d-1}} & u \neq 0 ,\\
 \frac{c'_{d,u}}{(r\ell)^{d} }  & u = 0 , \end{cases} 
 \end{equation}
for positive constants $c_{d,u}, c'_{d,u} > 0$, and moreover
\[ \frac{\Dc_{\ell,u}(x;r) }{\sqrt{\textrm{Var}( \Dc_{\ell,u}(x;r) )}} \stackrel{d}{\Longrightarrow} \mathcal{N}(0,1) . \]

One interesting feature of \eqref{e:var} is the phenomenon of \textit{variance reduction} at the nodal level ($u=0$), which can be attributed to the (anti-)symmetry of the sign function, and is analogous to the \textit{Berry cancellation} known to occur for other geometric functionals such as length of the nodal set~\cite{ber02,WigCMP}. From \eqref{e:var} one may infer that, with almost full probability, the volume-bias $\Dc_{\ell,u}(x;r)$ is small around {\em almost} all points $x \in \Mcc$ at {\em any} radius $r$ above Planck scale. More precisely, one can deduce that, for every $\eps>0$,
\begin{equation}
\label{eq:balance almost all}
\lim\limits_{R\rightarrow \infty}\limsup\limits_{\ell\rightarrow\infty} \prob(| \{x\in \Sc^{2}:\: |\Dc_{\ell,u}(x; R \cdot \ell^{-1})| > \eps   \}| > \eps ) = 0.
\end{equation}
However one cannot deduce from \eqref{e:var} that $\Dc_{\ell,u}(x;r) $ is \textit{uniformly} small over all $x \in \Mcc$ with high probability, as required by our definition of sign-balance. Indeed our results demonstrate that in order for the defect to be \textit{uniformly} small, the radius $r = r_\ell$ must be taken to be a logarithm power of the energy above Planck scale. The validity of \eqref{eq:balance almost all} only requires the decay of the covariance kernel, and, accordingly, could be extended to the random waves \eqref{eq:fTeta band-lim def} on smooth manifolds, under very mild assumptions on $\eta$.

\subsubsection{Uniform $L^2$-mass equidistribution}
In terms of \textit{uniform} properties of random Laplace eigenfunctions comparable to our results, the only statistic that has been addressed so far in the literature is the $L^2$-mass equidistribution, motivated by the quantum ergodicity (QE) premise.  Berry~\cite{Berry77} suggested that mass equidistribution should hold for deterministic Laplace eigenfunctions on small scales, i.e.\ every geodesic ball $B_{r}(x)\subseteq \Mcc$ and sequence of radii $r=r_{\lambda_{j}}$ satisfying  $r  \lambda_{j}\rightarrow \infty$ should satisfy
\[ \int\limits_{B_{r}(x)}\varphi_{j}(y)^{2}dy \rightarrow \frac{|B_{r}(x)|}{|\Mcc|} \]
uniformly w.r.t.\ $x\in\Mcc$. For \textit{random} Laplace eigenfunctions, uniform mass equidistribution has been proven to hold with high probability on scales a logarithm power above Planck scale (see \cite{di17} for random spherical harmonics, ~\cite{di21,ht20} for generalisations to manifolds), which is analogous to our results.

\vspace{2mm}

Compared to \cite{di17,di21,ht20}, the results presented in this manuscript are more general in terms of the permitted energy width $\eta$ on general manifolds, and they also establish lower bounds on the radius at which the balance occurs. We believe that an adaptation of our techniques could refine the said results, likely yielding an optimal lower bound, in line with our result for the volume-balance at non-zero levels in Theorem \ref{thm:sign bal rand wav}. We emphasise that the defect and level-balance have some extra challenging aspects compared to the $L^{2}$-mass distribution, not least since these functionals fail to be Lipschitz (in any appropriate function space) due to the discontinuity of the sign function at the origin.

\subsubsection{Deterministic sign-balance}

Our results lead to a natural conjecture on the sign-balance of  {\em deterministic} Laplace eigenfunctions $\varphi_{j}$ at least in the context of a generic scenario (cf.\ the results in ~\cite{NaPoSoAJM05,LoNa} explained in \S~\ref{sec:intro mot} above):

\begin{conjecture}
\label{conj:optimal balance}
Let $\Mcc$ be a chaotic smooth compact $d$-manifold, and $\{(\varphi_{j},\lambda_{j})\}_{j\ge 1}$ the corresponding sequence of Laplace eigenfunctions and eigenvalues. Then, along a density-$1$ subsequence, the $\varphi_{j}$ are sign-balanced  
above the scale $$\frac{(\log{\lambda_{j}})^{\frac{1}{d-1}+o(1)}}{\lambda_{j}}.$$
\end{conjecture}

\vspace{0.2cm}
Interestingly, it has been shown that `flat' toral eigenfunctions satisfy the strongest notion of sign-balance at optimal scales. These are the eigenfunctions on the standard flat $d$-torus $\Mcc=\R^{d}/\Z^{d}$ whose Fourier coefficients are bounded, or at most grow slowly, a setting which benefits from certain {\em number theoretic} methods. Lester-Rudnick ~\cite{LeRu} showed that the deterministic toral eigenfunctions (in $2d$) satisfy the small-scale QE ansatz, all the way down to the Planck scale (up to a sub-polynomial factor), subsequently refined in ~\cite{GrWi}. In~\cite[Theorem 1.1]{kwy21} it was proven that, for a density-$1$ sequence of energies $\lambda_j \in E$, deterministic `flat' eigenfunctions  have small defect around {\em almost} all points at {\em any} radius above Planck scale, in the sense that, for every $\eps > 0$,
\begin{equation*}
\lim\limits_{R\rightarrow \infty}\limsup\limits_{\lambda_j \in E, j \rightarrow \infty} | \{x\in \Sc^{2}:\: |\Dc_{\varphi_j}(x; R \cdot T^{-1})| > \eps   \}|  = 0.
\end{equation*}
Sartori~\cite{Sartori} showed how to modify the argument of ~\cite{kwy21} to upgrade this to \textit{all} points, i.e.\ showing that flat eigenfunctions are sign-balanced above the Planck scale. Since random toral eigenfunctions (`arithmetic random waves') are `flat' with almost full probability, this implies that, for `generic' sequences of energies, one can establish the analogue of Theorem \ref{thm:sign-bal spher harm} for the arithmetic random waves without the extra logarithmic factor (i.e.\ with $\bar{r}_T$ replaced by $R/\lambda_j$ with $R\rightarrow \infty$). 

\vspace{2mm}

Without assuming that $\Mcc$ is chaotic, the statement of Conjecture \ref{conj:optimal balance} fails e.g. for the torus at {\em macroscopic scales} ~\cite[Theorem 1.2]{kwy21}, 
and for the sphere at scale $\frac{1}{\sqrt{\lambda_{j}}}$, see Proposition \ref{prop:sign-bar exist man} below.

\subsection{{\bf Further directions}}

We discuss directions for further research motivated by our results:

\begin{enumerate}[(a)]

\item {\bf Subject 1: The phase transition for uniformity:} 
What is the true scale at which sign-balance occurs for random Laplace eigenfunctions? In other words, does there exist a scale $r_{T}$, so that, in the context of Theorem \ref{thm:sign bal rand wav} at level $u=0$,
\begin{itemize}
\item If $\mu$ is sufficiently large, $\Bc_{T}\left(\mu \cdot r_{T} \right)  \to  0$ in probability,
\item If $\mu$ is sufficiently small, $\Bc_{T}\left(\mu \cdot r_{T} \right)$ is bounded away from zero in probability?
\end{itemize}

\noindent Our results show that such a scale $r_{T}$ must satisfy $\ubar{r}_T \le r_T \le \bar{r}_T$ 
with $\ubar{r}_T ,\bar{r}_T$ given by \eqref{eq:rT crit rad rand wav}. We believe that $r_{T}=  o(\bar{r}_{T})$ is plausible, and also that a stronger concentration than proven in Theorem \ref{t:cub} holds for the defect compared to non-zero levels, by analogy with the variance reduction that is known to occur for the sign defect \eqref{e:var}. 

Provided that the scale $r_{T}$ is found, is there a {\em precise} phase transition at this scale, i.e.\ does there is a second scale $\omega_{T} = o(r_T)$ so that the above conclusions hold for the scale $\mu_{0}\cdot r_{T}+\gamma\cdot \omega_{T}$, with $\mu_{0}>0$ a fixed constant, and $\gamma$ sufficiently large or sufficiently small respectively?

\vspace{2mm}

\item {\bf Subject 2: Bridging the lacunas:}  There are various technical conditions in our results which may not be optimal, most notably the statement of Theorem \ref{thm:sign bal rand wav} does not apply in case $\eta$ stays bounded, and Theorem \ref{t:clb} imposes a stronger condition on $r$ than merely $r=o(1)$. It would be desirable to lift these, at least under some dynamical conditions on the manifold.
\vspace{2mm}

\item {\bf Subject 3: Concentration lower bounds for a larger deviation:} The concentration lower bound in Theorem \ref{t:clb2} assumes that the deviation $\eps>0$  is taken sufficiently small.  Does there exist a number $\eps_0 < 1$ such that the bounds in Theorem \ref{t:clb} no longer hold with $\eps > \eps_0$? If so, what is the correct order of deviations for such $\eps$?

\vspace{2mm}
\item {\bf Subject 4: Generalised notion of balance:} Our techniques are robust, and applicable to a wider class of functionals of random waves (or even general Gaussian random fields). One may consider a generalised notion of balance, given by some function $G:\R\rightarrow\R$ (smooth or not) in place of the sign function as a building block of the defect \eqref{eq:defect def}, and in turn, the corresponding notion of (im)balance \eqref{eq:sign-imbalance def}.  As a concrete example, one may take $G(t)=t^{2}$, associated to the $L^{2}$-mass equidistribution. Broadening slightly to functions $G: \R^k \rightarrow \R$, $k \ge 1$, one could also consider for instance the nodal volume of $f$ restricted to $B_{r}(x)$ (which can be expressed as an integral over a functional $G$ of $(f,\nabla f)$). For a general class of $G$, what is the critical scale above which the relevant ensembles of random fields are balanced, and below which they are not balanced? What properties of $G$ does this scale depend on?
\end{enumerate}

\subsection{On the proofs}
The proof of Theorem \ref{thm:sign bal rand wav} (including Theorem \ref{thm:sign-bal spher harm} as a special case) divides cleanly into an upper bound (Theorem \ref{thm:sign bal rand wav}(i.)) and a lower bound (Theorem \ref{thm:sign bal rand wav}(ii.)), which proceed via disjoint routes.
For the {\em upper bound} we apply the upper concentration estimate (Theorem \ref{t:cub}) to a well-chosen dense net of geodesic balls. We complete the proof by combining the union bound with a certain stability property of the volume-bias. 

The proof of Theorem \ref{t:cub} appeals to L\'{e}vy's concentration of measure principle and the Gaussian isoperimetric inequality. It is inspired by the proof of the exponential concentration of the nodal domain count for random spherical harmonics ~\cite{NaSoAJM}, but the present setting and the analysis are very different. 

Let us point out a curious aspect of our use of the union bound in deducing Theorem \ref{thm:sign bal rand wav}(i.) from Theorem \ref{t:cub}. One may be tempted to think that once the sign-balance of Theorem \ref{t:cub}(i.) has been established at some scale $\mu\cdot \bar{r}_{T}$, the same holds at all scales $r>\mu\cdot \bar{r}_{T}$, at least for $\mu$ sufficiently large. A natural way to argue would be to employ the integral-geometric sandwich approach of Nazarov-Sodin ~\cite{SoSPB}. However, such an argument would require an increase in the admissible scale, i.e.\ would only work for $r$ so that $\frac{r}{\bar{r}_{T}}\rightarrow \infty$. Interestingly, it turns out that there is an unexpected (at least, to the authors) obstruction to this line of argument: for every $r>1$ there exists a sequence of smooth functions on $\R^{2}$ which are sign-balanced at radius $1$ but {\em not} sign-balanced at radius $r$.

\begin{proposition}
\label{prop:no scale increase}
For every $r>1$ there exists a sequence of smooth functions $f_{j}:\R^{2}\rightarrow\R$ with the following properties as $j \to \infty$:
\begin{enumerate}[i.]

\item Uniformly over $x$ in compact subsets of $\R^2$,
\begin{equation}
\label{eq:sign balan rad=1}
\frac{1}{\pi}\int\limits_{B_{1}(x)} H(f_{j}(y))dy \rightarrow 0.
\end{equation}

\item There exists a number $z=z(r) > 0$ so that 
\begin{equation}
\label{eq:sign nonbalan rad>1}
\frac{1}{\pi r^{2}}\int\limits_{B_{r}(0)} H(f_{j}(y))dy \rightarrow z.
\end{equation}

\end{enumerate}
\end{proposition}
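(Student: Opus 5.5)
The plan is to build the counterexample in two stages. First I construct a bounded, smooth "target density" $g:\R^{2}\to[-\delta,\delta]$, with $0<\delta<1$, whose averages over every unit disk vanish but whose average over $B_r(0)$ does not. Then I realise $g$ as the weak-$*$ limit of sign functions $H(f_j)$ of smooth functions, using fast oscillation with a spatially varying duty cycle.

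\emph{Step 1: the target density.} Let $0<j_{1}<j_{2}<\dots$ be the positive zeros of the Bessel function $J_{1}$. For a zero $\rho=j_{k}$, set $g(y)=\pm\delta J_{0}(\rho\|y\|)$. This $g$ is a Helmholtz eigenfunction: $\Delta g+\rho^{2}g=0$. By the mean value property for such eigenfunctions in $\R^{2}$,
\[\frac{1}{\pi}\int_{B_{1}(x)}g(y)\,dy=g(x)\cdot\frac{2J_{1}(\rho)}{\rho}=0\qquad\text{for all } x\in\R^{2}.\]
On the other hand,
\[\frac{1}{\pi r^{2}}\int_{B_{r}(0)}g=\pm\delta\,\frac{2J_{1}(\rho r)}{\rho r}.\]
So it suffices to find some $k$ with $J_{1}(j_{k}r)\neq 0$; the sign $\pm$ is then chosen so that this quantity, which will be $z$, is positive. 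Such a $k$ exists because the zero sets $\{j_{k}\}$ and $\{j_{k}/r\}$ cannot coincide for $r>1$. To see this, use McMahon's asymptotics $j_{k}=\beta_{k}-\frac{3}{8\beta_{k}}+O(\beta_{k}^{-3})$ with $\beta_{k}=\pi(k+\frac14)$. The gaps of $\{r j_{k}\}$ tend to $r\pi$, while the gaps of $\{j_m\}$ tend to $\pi$. If $r j_k=j_{m(k)}$ for all $k$, then $m(k+1)-m(k)\to r$, which forces $r\in\mathbb{N}$. In the integer case, matching the $1/\beta$ correction terms gives $\frac{3r}{8\beta_k}=\frac{3}{8r\beta_k}+o(1/\beta_k)$, which is impossible for $r\neq 1$. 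Alternatively, a plane wave $\delta\cos(\rho y_{1})$ works as well; I would pick whichever makes the verification cleanest.

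\emph{Step 2: realising $g$ by signs.} For $a\in(-1,1)$, the proportion of $t\in[0,1]$ with $\cos(2\pi t)\ge a$ equals $\arccos(a)/\pi$. I want this proportion to equal $(1+g)/2$, which gives $a=-\sin(\pi g/2)$. So define the smooth functions
\[f_{j}(y)=\cos(2\pi j y_{1})+\sin\!\big(\tfrac{\pi}{2}g(y)\big).\]
Since $g$ is smooth with bounded derivatives and $|g|\le\delta<1$, on each strip of width $1/j$ in the $y_1$-direction the set $\{f_j\ge 0\}$ occupies a fraction $(1+g(y))/2+O(1/j)$. Moreover the zero set of $f_j$ has measure zero, because $\partial_{y_1}f_j\neq0$ there once $j$ is large. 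Integrating against test functions then gives $H(f_{j})\rightharpoonup g$ weak-$*$ in $L^{\infty}_{\mathrm{loc}}$.

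\emph{Step 3: uniformity and conclusion.} Convert the weak-$*$ convergence into convergence of disk averages, uniformly over $x$ in a compact set $K$. The family of indicators $\{\id_{B_{1}(x)}:x\in K\}$ is compact in $L^{1}$, and $\|H(f_j)\|_\infty\le1$, so weak-$*$ convergence is uniform over this family. Hence $\frac{1}{\pi}\int_{B_{1}(x)}H(f_{j})\to\frac1\pi\int_{B_1(x)}g=0$ uniformly on $K$, which is \eqref{eq:sign balan rad=1}. Likewise $\frac{1}{\pi r^{2}}\int_{B_{r}(0)}H(f_{j})\to z=\delta\,\frac{2|J_{1}(\rho r)|}{\rho r}>0$, which is \eqref{eq:sign nonbalan rad>1}. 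Alternatively, one can verify the uniform convergence directly by slicing the disk into $1/j$-strips, where the error is $O(1/j)$ uniformly.

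The main obstacle I expect is the number-theoretic point in Step 1: guaranteeing that some zero $j_{k}$ of $J_1$ has $J_{1}(j_{k}r)\neq0$ for every $r>1$, and in particular for integer $r$. I would handle it via the McMahon expansion as sketched. Should that prove delicate, a fallback is to use $g=\delta\,\mathrm{Re}\,\hat\mu$ for a measure $\mu$ on the circle $\{|\xi|=\rho\}$, or a combination of several radii $j_k$, which gives enough freedom to make the $B_r(0)$-average nonzero.
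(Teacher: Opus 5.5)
Your proof is correct, and its skeleton is the paper's. You use the same target density $\pm J_0(\rho\|y\|)$ with $J_1(\rho)=0$ (the paper's $\xi_r$ in \eqref{eq:xi J0 def}), the same McMahon argument as in Lemma~\ref{lem:bess} to guarantee $J_1(\rho r)\neq 0$, and you realise this density as a weak limit of $H(f_j)$.

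The differences lie in the two supporting steps. For the vanishing of all unit-disk averages, the paper argues formally on the Fourier side ($\widehat{\xi}$ lives on the circle $\|\zeta\|=\rho$, where $\widehat{\chi_1}$ vanishes) and omits the mollification needed to justify it. Your Helmholtz mean value identity $\frac{1}{\pi R^2}\int_{B_R(x)}g=g(x)\cdot\frac{2J_1(\rho R)}{\rho R}$ instead yields both parts of \eqref{e:add} rigorously in one line.

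For the realisation, the paper proves a general statement (Lemmas~\ref{lem:red func to sets}--\ref{lem:red comp dens}) by placing small smooth domains of prescribed area in a fine grid of boxes. That route relies on the omitted fact that such a union is the positive set of a smooth function. It also needs a density argument, because $\xi_r(0)=\pm1$ violates its working assumption $\xi\in(-1,1)$, and it gets uniformity from equicontinuity. Your explicit $f_j=\cos(2\pi j y_1)+\sin(\frac{\pi}{2}g)$, together with the factor $\delta<1$, sidesteps all of this. Slicing along $y_1$ then gives an $O(1/j)$ error uniformly over all disks of bounded radius anywhere in $\R^2$, since $\nabla g$ is globally bounded. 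The paper's route buys generality (any continuous density, even realised by a single domain), which is not needed here.

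One minor remark: your fallback in the last paragraph would not bypass the Bessel-zero lemma. A combination $g=\sum_i g_i$ of Helmholtz eigenfunctions with frequencies $\rho_i$ satisfying $J_1(\rho_i)=0$ has $B_r(0)$-average $\sum_i g_i(0)\,\frac{2J_1(\rho_i r)}{\rho_i r}$, which vanishes whenever every $J_1(\rho_i r)=0$. The McMahon argument already settles this point, so nothing is missing.
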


\noindent For completeness we give a proof of Proposition \ref{prop:no scale increase} in Appendix \ref{apx:no scale increase} (easy to generalise to arbitrary dimensions). While the proof constructs a sequence that satisfies the statement for {\em almost} every radius $r>1$ (i.e.\ satisfies \eqref{eq:sign nonbalan rad>1} with $r\in (1,\infty)\setminus S$, where $S$ is a set of isolated points), it is conceivable that one could construct a single sequence that works for {\em every} $r>1$, although we do not address this.

\smallskip
For the {\em lower bound} we apply the lower concentration estimates (theorems \ref{t:clb} and \ref{t:clb2}) to a well-separated collection of geodesic balls, and combine with a certain `sprinkled decoupling' technique that allows to establish the approximate independence of the defect (or volume-bias) on these balls. 
The proof of theorems \ref{t:clb} and \ref{t:clb2} proceed by constructing an exceptional event of sufficiently large probability. The construction in Theorem \ref{t:clb2} is by far the more demanding, since it requires the existence of a `sign-barrier': a sequence of functions in the reproducing kernel Hilbert space of the field that are {\em not} sign-balanced at the required scales. In fact, even constructing a {\em single} (deterministic) function that is not sign-balanced at scales above Planck's scale is a major challenge, since the natural candidate (the reproducing kernel $K_T(\cdot,x)$) turns out to be {\em perfectly} sign-balanced at the theoretical minimum scale, decisively failing to serve as a sign-barrier. 

Instead we construct a sign-barrier involving three superimposed copies of the reproducing kernel, associated to a certain modification of the field $f_T$ with restricted energy levels, centred at three distant points; this is inspired by a similar Euclidean construction in~\cite{kwy21}. This construction is by far the most subtle and difficult argument in the proof. The construction in Theorem \ref{t:clb} is relatively simple, involving only a single copy of the reproducing kernel $K_T(\cdot,x)$. This turns out to be sufficient, since the proof of Theorem \ref{t:clb} only requires the volume-imbalance of the reproducing kernel at some positive level $u > 0 $.

\smallskip
While the notion of sign-balance is most interesting for random waves, our results and methods actually apply to more general Gaussian ensembles than we consider in  Theorem \ref{thm:sign bal rand wav}. In fact, for many ensembles, namely those that have a non-trivial contribution from low eigenmodes, we could actually prove a stronger lower bound compared to Theorem \ref{thm:sign bal rand wav} that replaces the lower scale $\ubar{r}_T$ with the upper scale $\bar{r}_T$ even \textit{at} the nodal level $u=0$, matching our results for non-zero levels $u \neq 0$. Examples of such ensembles are the \textit{fully-banded} random waves (i.e.\ with $\eta = T$), or the Kostlan ensemble of random polynomials on the sphere. For these ensembles one can easily construct an optimal sign-barrier using the low eigenmodes of the field.

\medskip
\section{Asymptotics for the covariance kernel of random waves}
\label{sec:covar kern}

In this section we study the asymptotics of the covariance kernel 
\begin{equation}
\label{eq:K covar def expr}
K_{T}(x,y) = K_{T,\eta}(x,y) =\E\left[ f_{T,\eta}(x)\cdot f_{T,\eta}(y)  \right] = \frac{\left|\Mcc\right|}{N(T,\eta)}\sum\limits_{\lambda_{j}\in [T-\eta,T]} \varphi_{j}(x)\cdot \varphi_{j}(y)
\end{equation}
of the ensemble $\{f_{T,\eta}\}$ of band-limited functions \eqref{eq:fTeta band-lim def}, central to our analysis. The kernel $K_{T}$ coincides with the {\em spectral projector} in $L^{2}(\Mcc)$ onto the space spanned by $\{\varphi_{j}\}_{\lambda_{j}\in [T-\eta,T]}$, and its asymptotic behaviour in various regimes has been extensively studied 
in the microlocal analysis literature.

\subsection{Statement of the asymptotics}
\label{sec:asymp covar stat}

The results given in this section prescribe the leading asymptotics for $K_{T}(x,y)$ in some regimes, both on and off the diagonal. We first discuss the general case of smooth manifolds, before giving some finer results for the round sphere $\Mcc = \Sc^d$.

\subsubsection{General manifolds}

\begin{proposition}
\label{prop:asymp covar waves}
There exists a number $C = C(\Mcc) >0$ such that the following holds:
$\,$

\begin{enumerate}[i.]

\item {\bf Diagonal estimate:}
For every $T\ge 1$, $\eta\in [C,T]$, $x\in\Mcc$, one has
\begin{equation*}
 K_{T,\eta}(x,x) = 1 +  O\left( \frac{1}{\eta}  \right),
\end{equation*}
where the implicit constant in the `$O$'-notation depends only on $\Mcc$.

\item {\bf Off-diagonal estimate:} 
Define 
\begin{equation}
\label{eq:gammad offset def}
\gamma_{d}:= \frac{1}{4}(1-d)\pi \qquad \text{and} \qquad c_{d} = \sqrt{\frac{2}{\pi}}\cdot \frac{(2\pi)^{d/2}}{V_{d}} ,
\end{equation}
with $V_{d}$ the volume of the unit $d$-ball. For $x,y\in\Mcc$ denote $r:=d(x,y)$. Then for every $T\ge 1$, $\eta\in [C,T]$, $x,y\in\Mcc$, one has
\begin{equation}
\label{eq:covar rand wav asymp off}
K_{T,\eta}(x,y) = c_{d}\cdot (rT)^{-\frac{d-1}{2}} \Big(\cos\left( r\cdot T + \gamma_{d} \right) + O \Big(\eta r + \frac{1}{rT} + \frac{(rT)^{\frac{d-1}{2}}}{\eta}  \Big) \Big),
\end{equation}
where the implicit constant in the `$O$'-notation depends only on $\Mcc$.
\end{enumerate}
\end{proposition}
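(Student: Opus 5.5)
The plan is to derive both estimates from the local Weyl law for the spectral function in the form of Hörmander's theorem with a sharp remainder. Let $E(\lambda;x,y)=\sum_{\lambda_j\le\lambda}\varphi_j(x)\varphi_j(y)$. Hörmander's local Weyl law, obtained via the Fourier integral operator parametrix for $e^{it\sqrt{-\Delta}}$ on times $|t|<\inj(\Mcc)$, gives uniformly for $x,y$ with $d(x,y)$ small
\[
E(\lambda;x,y)=\frac{1}{(2\pi)^{d}}\int_{|\xi|_{g_y}\le\lambda} e^{i\langle \exp_y^{-1}x,\xi\rangle}\,\frac{d\xi}{\sqrt{|g(y)|}}\;+\;R(\lambda;x,y),
\]
with $R(\lambda;x,y)=O(\lambda^{d-1})$ uniformly. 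Off the diagonal one also has the refined bound $R=O\big(\lambda^{d-1}\min\{1,(\lambda r)^{-(d-1)/2}\}\cdot(\ldots)\big)$; this will be used in the form $R=O(\lambda^{d-1})$ together with smoothing. We then write $K_{T,\eta}(x,y)=\frac{|\Mcc|}{N}\big(E(T;x,y)-E(T-\eta;x,y)\big)$.

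\textbf{Diagonal estimate.} At $x=y$ the main term is $\frac{V_d}{(2\pi)^d}\lambda^d$. The difference over the window is therefore $\frac{V_d}{(2\pi)^d}\big(T^d-(T-\eta)^d\big)+O(T^{d-1})$. Integrating over $\Mcc$ gives $N=\frac{V_d|\Mcc|}{(2\pi)^d}\big(T^d-(T-\eta)^d\big)+O(T^{d-1})$. Since $T^d-(T-\eta)^d\asymp \eta T^{d-1}$ for $\eta\le T$, the ratio is $1+O(T^{d-1}/(\eta T^{d-1}))=1+O(1/\eta)$. The lower bound $\eta\ge C$ ensures the window is nonempty and that the main term dominates.

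\textbf{Off-diagonal estimate.} Here the main term of $E(\lambda;\cdot)$ is the Euclidean spectral projector, $\frac{1}{(2\pi)^d}\int_{|\xi|\le\lambda}e^{i r\xi_1}d\xi=(2\pi)^{-d/2}\lambda^{d}(\lambda r)^{-d/2}J_{d/2}(\lambda r)$ in normal coordinates, up to metric corrections of relative size $O(r^2)$. Differentiating in $\lambda$, the window term equals $\int_{T-\eta}^{T}\frac{1}{(2\pi)^{d}}\lambda^{d-1}\widehat{\sigma}(\lambda r)\,d\lambda$, where $\widehat\sigma(s)\propto s^{-(d-2)/2}J_{(d-2)/2}(s)$. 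Using Bessel asymptotics $J_\nu(s)=\sqrt{2/(\pi s)}\big(\cos(s-\nu\pi/2-\pi/4)+O(1/s)\big)$, the phase $s-\frac{(d-2)\pi}{4}-\frac\pi4=s+\gamma_d$ emerges. Replacing $\lambda$ by $T$ across the window costs a relative error of $O(\eta r)$ in the phase, since $\cos(\lambda r+\gamma_d)-\cos(Tr+\gamma_d)=O(\eta r)$; the prefactor variation costs only $O(\eta/T)\le O(\eta r)$ as $rT\ge1$. The Bessel remainder contributes $O(1/(rT))$. After dividing by $N\asymp \eta T^{d-1}/|\Mcc|$ and tracking constants, the amplitude is $c_d(rT)^{-(d-1)/2}$. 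The difference of remainders, $O(T^{d-1})/N=O(1/\eta)$, is written relative to the amplitude as $O\big((rT)^{(d-1)/2}/\eta\big)$. For $r$ not small (up to the diameter), the claim is trivial or handled by the same $O(1/\eta)$ remainder bound, since the error term then dominates. The case $rT<1$ is also absorbed, because the $1/(rT)$ term then exceeds $1$ and $|K|\le1+O(1/\eta)$ by Cauchy–Schwarz and the diagonal estimate.

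\textbf{Main obstacle.} The main obstacle is the uniform off-diagonal remainder $R(\lambda;x,y)=O(\lambda^{d-1})$, uniform in $x,y$ on a general smooth manifold with no dynamical assumption. Hörmander's theorem gives exactly this, and it is uniform jointly in $(x,y)$ near the diagonal. The standard argument is to smooth with a Fourier multiplier $\rho(t)$ supported in $|t|<\inj$ and then apply a Tauberian lemma; I would cite it rather than reprove it. Care is also needed so that the metric corrections to the normal-coordinate phase are absorbed into the stated error. These corrections contribute $O(r^2T\cdot r)$-type phase shifts, and I would check that they are dominated by $\eta r+\frac1{rT}$, or else include them implicitly through the Hadamard parametrix amplitude $\Theta(x,y)^{-1/2}=1+O(r^2)$.
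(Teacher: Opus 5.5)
Your proposal is correct and is essentially the paper's proof. Both use a uniform local Weyl law with $O(\lambda^{d-1})$ remainder, then a first-order expansion across the window (your $\int_{T-\eta}^{T}\partial_\lambda$ in place of Lemma~\ref{lem:aux simple Bessel}(iii.)), the asymptotics of $J_{(d-2)/2}$, and division by $N(T,\eta)$ via Weyl's law; with the phase $\langle\exp_y^{-1}x,\xi\rangle$ the main term is exactly $(2\pi)^{-d}\lambda^{d}A_d(\lambda\,d(x,y))$, i.e.\ the paper's \eqref{eq:loc Wey SW}, so the metric corrections you worry about never arise.

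One caution on tracking constants. Since $N(T,\eta)\sim\frac{V_d|\Mcc|}{(2\pi)^{d}}\,d\,\eta T^{d-1}$, the amplitude comes out as $\sqrt{2/\pi}\,(2\pi)^{d/2}/(dV_d)$, which equals $\sqrt{2/\pi}$ for $d=2$, matching $J_0$ and the sphere constant of Proposition~\ref{prop:asymp covar spher}. The stated $c_d$ is therefore missing a factor $1/d$. This is a harmless typo in the statement, not a flaw in your argument: the paper's own computation, carried through, gives the same $dV_d$.
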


Proposition \ref{prop:asymp covar waves} asserts that, in suitable regimes, the properly rescaled random waves are well approximated by the $d$-dimensional monochromatic waves akin to \eqref{eq:Berry's RWM covar}. While this is well-known, we did not find a strong quantitative version of the type \eqref{eq:covar rand wav asymp off} in the literature (it is notably sharper than~\cite[\S~2.1]{SW}). We believe Proposition \ref{prop:asymp covar waves} to be of significant interest for the purpose of other applications on random waves on generic manifolds. 

\vspace{2mm}
Observe that the asymptotic formula \eqref{eq:covar rand wav asymp off} breaks down (i.e.\ the error term blows up), 
unless $rT\rightarrow \infty$, forcing $\eta\rightarrow \infty$, and a forteriori $r\rightarrow 0$. It also breaks down unless $\eta r \cdot \frac{1}{r T} = \frac{\eta}{T} \to 0$, so that $\eta = o(T)$. In other words, as might be expected, the asymptotics are only applicable for monochromatic $f_{T}$. The fact that the said asymptotics break down unless $\eta \to \infty$ is the underlying reason for this extra assumption in Theorem \ref{thm:sign bal rand wav} for generic manifolds. However note that Theorem \ref{thm:sign bal rand wav} does \textit{not} require $f_{T}$ to be monochromatic; this is since we shall apply Proposition \ref{prop:asymp covar waves} only after restricting the energies of $f_T$ to lie in a monochromatic band.

\subsubsection{Round sphere}
We next present stronger results for the round sphere $\Mcc = \Sc^d$. In this case, $K_{T,\eta}(x,y)$ depends only on the spherical distance $\theta=d(x,y)\in [0,\pi]$ (akin to $r$ in \eqref{eq:covar rand wav asymp off}), and has an explicit expression in terms of orthogonal polynomials.

\vspace{2mm}
In what follows it will be convenient to slightly abuse our notation. We assume that $\ell=T$ is a positive integer (in reality, $\ell(\ell+d-1)=T^{2}$ but we will neglect this discrepancy), and $\eta \ge 1$ is an integer, so that the energy window $\Wc:=[\ell-\eta+1,\ell] = [T-\eta+1,T]$ contains the `integer' energies $\ell-\eta+1,\ldots, \ell$ with the corresponding multiplicities $n_{\ell'}$ as in \eqref{eq:n dim spher harm def}, $\ell'\in \Wc$. We then re-define \eqref{eq:fTeta band-lim def} by writing
\begin{equation}
\label{eq:band-lim sphere}
f_{\ell,\eta}(x) = \frac{1}{\sqrt{N(\ell,\eta)}}\sum\limits_{\ell'=\ell-\eta+1}^{\ell}\sqrt{n_{\ell}}\cdot H_{\ell'}(x),
\end{equation}
where 
\begin{equation}
\label{eq:N(ell,eta) def spher}
N(\ell,\eta):=\sum\limits_{\ell'=\ell-\eta+1}^{\ell}n_{\ell'},
\end{equation}
with $n_{\ell'}$ as in \eqref{eq:n dim spher harm def}. Evidently, the random spherical harmonics $f_{\ell,1}(\cdot)\equiv H_{\ell}(\cdot)$ correspond to the `shortest' energy window. 

\vspace{2mm} The random field $f_{\ell,\eta}$ is invariant w.r.t.\ rotations of $\Sc^{d}$, hence the corresponding covariance kernel depends only on $\theta = d(x,y)$: 
\[ K_{\ell,\eta}(\theta)=K_{\ell,\eta}(x,y) = \E\left[f_{\ell,\eta}(x) \cdot f_{\ell,\eta}(y)\right]. \]
In particular, the identity 
\begin{equation}
\label{eq:var ==1 spher band}
\var\left(f_{\ell,\eta}(x)\right) =  K_{\ell,\eta}(0) \equiv   1
\end{equation}
holds by the normalisation 
\[ \E\left[\|f_{T,\eta}\|^{2}\right]\equiv |\Sc^{d}|, \]
 a by-product of the definition \eqref{eq:fTeta band-lim def} (cf.\ \eqref{eq:var==1 spher harm}).

\vspace{2mm}
One may express $K_{\ell,\eta}(\theta)$ in terms of orthogonal polynomials as follows. Denote 
\begin{equation}
\label{eq:N<=ell sphere sum enrg}
N(\ell):= N(\ell,\ell) = \sum\limits_{\ell'=1}^{\ell}n_{\ell'}
\end{equation}
(consistent to the new conventions)
and let 
\begin{equation}
\label{eq:covar full band sphere}
K_{\le\ell}(\theta)=K_{\le\ell}(x,y):= K_{\ell,\ell}(x,y)
\end{equation}
be the covariance kernel of the {\em fully banded} ensemble $f_{\ell,\ell}$, so that
\begin{equation}
\label{eq:covar rec full band spher}
N(\ell,\eta)K_{\ell,\eta}(\theta) = N(\ell)K_{\le\ell}(\theta)- N(\ell-\eta)K_{\le\ell-\eta}(\theta).
\end{equation}
Then one has the following exact formula, an instance of Christoffel-Darboux \cite[Equality (4.5.3)]{szego}:
\begin{equation}
\label{eq:Chris Darb}
K_{\le\ell}(\theta) = \frac{c_{d}}{N(\ell)}  \cdot \frac{  \Gamma(\ell+d) }{ \Gamma(\ell + d/2)   } P_{\ell}^{(d/2,(d-2)/2)}(\cos \theta) ,  
\end{equation}
where, as above, $P_\ell^{(\alpha,\beta)}$ is the Jacobi polynomial, and $c_{d}>0$ is a dimensional constant that, will be eventually recovered by invoking the unit variance constraint $K_{\le\ell}(0)=1$, cf.\ \eqref{eq:var==1 spher harm} or \eqref{eq:var int Mcc}.

\begin{proposition}
\label{prop:asymp covar spher}
For every $\ell\ge 1$ and $\eta\in [1,\ell]$ one has 
\begin{equation}
\label{eq:var spher==1}
K_{\ell,\eta}(0)\equiv 1.
\end{equation}
Further, for every $\ell\ge 1$, $\eta\in [1,\ell]$ and $\theta\in \left[0,\frac{\pi}{2}\right]$ one has
\begin{equation}
\label{eq:covar spher asymp off}
K_{\ell,\eta}(\theta)=c_{d}\cdot  \frac{1}{(\theta\ell)^{\frac{d-1}{2}}} \Big (\cos(\theta\ell+\gamma_{d}) + O\Big(\frac{1}{\ell \theta} + \eta \theta \Big)  \Big),
\end{equation}
where $\gamma_{d}$ is given by \eqref{eq:gammad offset def}, $$c_{d} = \frac{(d-1)!\cdot |\Sc^{d}|}{(2\pi)^{(d+1)/2}}$$ is a dimensional constant, and the constant implicit in the `$O$'-notation only depends on $d$.
\end{proposition}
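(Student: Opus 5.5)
The plan is to derive everything from the Christoffel--Darboux representation \eqref{eq:Chris Darb} combined with \eqref{eq:covar rec full band spher}, together with the classical Darboux/Hilb-type asymptotics for Jacobi polynomials. The identity \eqref{eq:var spher==1} is immediate. By the addition theorem, $\sum_{m} Y_{\ell',m}(x)^2 = n_{\ell'}/|\Sc^d|$ for every $x$. Hence each $H_{\ell'}$ has unit variance, as in \eqref{eq:var==1 spher harm}. Since the $H_{\ell'}$ are independent, the variance of \eqref{eq:band-lim sphere} is $N(\ell,\eta)^{-1}\sum_{\ell'\in\Wc} n_{\ell'}=1$; here the weight is read as $\sqrt{n_{\ell'}}$. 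In particular $K_{\le \ell}(0)=1$, which fixes the constant in \eqref{eq:Chris Darb} through the value $P^{(\alpha,\beta)}_\ell(1)=\binom{\ell+\alpha}{\ell}$.

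For the off-diagonal estimate I would first treat the fully banded kernel. I would use the uniform Szeg\H{o} asymptotics \cite{szego}, valid for $\ell^{-1}\le\theta\le\pi/2$:
\[
P_\ell^{(\alpha,\beta)}(\cos\theta)=\frac{\cos\big(N_\ell\theta-(2\alpha+1)\pi/4\big)}{\sqrt{\pi \ell}\,(\sin\tfrac{\theta}{2})^{\alpha+1/2}(\cos\tfrac{\theta}{2})^{\beta+1/2}}+O\big((\ell\theta)^{-1}\ell^{-1/2}\theta^{-\alpha-1/2}\big),
\]
where $N_\ell=\ell+(\alpha+\beta+1)/2$ and $\alpha=d/2$, $\beta=(d-2)/2$. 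Together with $\Gamma(\ell+d)/\Gamma(\ell+d/2)\sim \ell^{d/2}$ and $N(\ell)\asymp \ell^d$, this gives $K_{\le\ell}(\theta)\approx C\,(\ell\theta)^{-(d+1)/2}\cos(\ell\theta+\text{phase})$. The kernel $K_{\ell,\eta}$ is then obtained by differencing. Rather than subtracting the two terms in \eqref{eq:covar rec full band sphere} directly, I would write $N(\ell,\eta)K_{\ell,\eta}=\sum_{\ell'\in\Wc}\big(N(\ell')K_{\le\ell'}-N(\ell'-1)K_{\le\ell'-1}\big)$. Each summand equals $n_{\ell'}G_{\alpha;\ell'}(\cos\theta)$, and the Gegenbauer polynomial has its own Hilb asymptotic
\[
G_{\alpha;\ell'}(\cos\theta)=c_d(\ell'\theta)^{-(d-1)/2}\Big(\cos(\ell'\theta+\gamma_d)+O\big(\tfrac{1}{\ell'\theta}\big)\Big),
\]
with the $\theta$-dependence of $\sin\theta$ versus $\theta$ absorbed into the error for $\theta\le\pi/2$, or kept exactly. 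Averaging over $\ell'\in\Wc$ with weights $n_{\ell'}/N(\ell,\eta)$, I then replace $\ell'$ by $\ell$. The amplitude change is $O(\eta/\ell)\le O(\eta\theta)$ whenever $\theta\ge 1/\ell$. The phase change satisfies $|\cos(\ell'\theta+\gamma_d)-\cos(\ell\theta+\gamma_d)|\le \eta\theta$. Together these give the error term $O\big(\tfrac{1}{\ell\theta}+\eta\theta\big)$ in \eqref{eq:covar spher asymp off}, and the constant $c_d$ follows from the unit normalisation.

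In the regime $\theta<1/\ell$ the claim is trivial, because the error term $1/(\ell\theta)$ exceeds $1$. The main term and $K_{\ell,\eta}$ are both $O\big((\ell\theta)^{-(d-1)/2}\cdot\tfrac{1}{\ell\theta}\big)$ there: $|K|\le 1$ by Cauchy--Schwarz, and $1\le (\ell\theta)^{-(d+1)/2}$ in this range.

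The main obstacle is obtaining the Hilb-type asymptotic for $G_{\alpha;\ell'}$ with an error that is uniform on the whole range $[1/\ell,\pi/2]$ and carries the correct relative size $1/(\ell\theta)$, rather than a weaker relative error. I would take this from Szeg\H{o}'s Theorem 8.21.13 (Darboux's formula with remainder, uniform on $[c/\ell,\pi-\epsilon]$), applied to $P^{(\alpha-1/2,\alpha-1/2)}_{\ell'}$. I would then carefully track the normalisation $\binom{\ell'+d/2-1}{\ell'}\sim \ell'^{(d-2)/2}/\Gamma(d/2)$ in order to identify $c_d$ and $\gamma_d=(1-d)\pi/4$.
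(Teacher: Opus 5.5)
Your argument is correct, but it takes a different route from the paper's. The paper never works with individual degrees. It starts from the Christoffel--Darboux formula \eqref{eq:Chris Darb} for the fully banded kernel and applies the Hilb-type Bessel asymptotics \cite[(8.21.17)]{szego} to $P_\ell^{(d/2,(d-2)/2)}$ (Lemma~\ref{lem:full band kern asymp}(ii.)). It then differences $N(\ell)K_{\le\ell}-N(\ell-\eta)K_{\le\ell-\eta}$ using the first-order Taylor expansion in $\delta=\eta/M$ of Lemma~\ref{lem:aux simple Bessel}(iii.). This produces the main term $\eta\theta J_{(d-2)/2}(M\theta)$, a Taylor remainder that becomes the $\eta\theta$ error, and a separate correction from $\mu(\ell)$ versus $\mu(\ell-\eta)$. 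That structure deliberately mirrors the general-manifold Proposition~\ref{prop:asymp covar waves}, where only the fully banded local Weyl law is available.

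You instead exploit the exact degeneracy of the sphere: you write $N(\ell,\eta)K_{\ell,\eta}=\sum_{\ell'}n_{\ell'}G_{\alpha;\ell'}(\cos\theta)$ and use the single-degree Darboux formula \cite[Thm.~8.21.13]{szego}. The $\eta\theta$ error then comes from a Lipschitz bound on the cosine, and no error terms are ever differenced. This is more elementary and self-contained. It also yields the constant directly as $2^{(d-1)/2}\Gamma(d/2)/\sqrt{\pi}$, which equals the stated $c_d=(d-1)!\,|\Sc^{d}|/(2\pi)^{(d+1)/2}$ by the duplication formula.

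Two bookkeeping points need fixing.
\begin{itemize}
\item In Szeg\H{o}'s formula the phase is $(\ell'+\frac{d-1}{2})\theta+\gamma_d$ and the amplitude carries $(\sin\theta)^{-(d-1)/2}$. Your per-degree display therefore has an extra relative error $O(\theta)$, which is not $O(1/(\ell'\theta))$ near $\theta\asymp 1$. It is still absorbed into $O(\eta\theta)$, since $\eta\ge 1$.
\item When $\eta>\ell/2$, the window contains degrees with $\ell'\theta<1$, where Darboux's formula is unavailable. For those degrees the amplitude ratio $(\ell/\ell')^{(d-1)/2}$ is also far from $1+O(\eta/\ell)$. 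In this regime, however, $\eta\theta\ge\frac12$ for $\theta\ge 1/\ell$, so the claim reduces to $|K_{\ell,\eta}(\theta)|\ll(\ell\theta)^{-\frac{d-1}{2}}$. This follows from $|G_{\alpha;\ell'}(\cos\theta)|\ll(\ell'\theta)^{-\frac{d-1}{2}}$ (trivial when $\ell'\theta\le 1$) together with $\sum_{\ell'\le\ell}\frac{n_{\ell'}}{N(\ell,\eta)}(\ell'\theta)^{-\frac{d-1}{2}}\ll\frac{\theta^{-(d-1)/2}\ell^{(d+1)/2}}{\eta\,\ell^{d-1}}\ll(\ell\theta)^{-\frac{d-1}{2}}$.
\end{itemize}
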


A comparison between the respective error terms in \eqref{eq:covar rand wav asymp off} and \eqref{eq:covar spher asymp off} reveals that the latter is lacking a term analogous to $O(\eta^{-1} (rT)^{\frac{d-1}{2}})$. This is a manifestation of the fact that, thanks to the spectral degeneracies in the case $\Mcc = \Sc^d$, the width $\eta$ of the energy window does not need to grow for the corresponding random waves to exhibit universality.

\vspace{2mm}
Though we restrict $\theta \le \frac{\pi}{2}$, one may obtain approximate values of $K_{\ell,\eta}(\theta)$ for $\theta\in [\frac{\pi}{2},\pi]$ by using the natural symmetry of the Jacobi polynomials $P_{\ell}^{(d/2,(d-2)/2)}$ (which depends on the parity of $\ell$).

\subsubsection{Uniform off-diagonal decay}

We shall also need the following rougher estimate, providing uniform power-law decay of correlations:

\begin{corollary}
\label{cor:decay correlations}
Assume that either (a) there exists a number $\delta_{0}>0$ so that $\eta(T)>T^{\delta_{0}}$ or (b) $\Mcc=\Sc^{d}$.
There exist numbers $\delta_{1}>0$ sufficiently small and $C_{1}>0$ sufficiently large, so that one has
\begin{equation*}
|K_{T,\eta}(x,y)| \le \frac{C_{1}}{T^{\delta_{1}}},
\end{equation*}
uniformly for every $x,y\in\Mcc$ satisfying $d(x,y) > \frac{1}{T^{1-\delta_{1}}}$, restricted to $d(x,y)\in \left[0,\frac{\pi}{2}\right]$ in case (b).
\end{corollary}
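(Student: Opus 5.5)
We will derive Corollary \ref{cor:decay correlations} directly from the off-diagonal asymptotics in Propositions \ref{prop:asymp covar waves} and \ref{prop:asymp covar spher}. For small distances these give the bound; for macroscopic distances we need a separate argument. Write $r=d(x,y)$, and fix $\delta_1>0$ small, to be chosen in terms of $\delta_0$ and $d$.

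\emph{Case (b), the sphere.} By \eqref{eq:covar spher asymp off}, for $\theta\in[T^{-(1-\delta_1)},\pi/2]$ one has
\[ |K_{\ell,\eta}(\theta)|\ll (\theta\ell)^{-\frac{d-1}{2}}\Big(1+\frac{1}{\ell\theta}+\eta\theta\Big). \]
The first two terms give at most $(\theta\ell)^{-(d-1)/2}\le T^{-\delta_1(d-1)/2}$. The term $\eta\theta(\theta\ell)^{-(d-1)/2}$ is not small for large $\eta\theta$, so the asymptotic formula alone fails. Instead we use the exact formula \eqref{eq:covar rec full band sphere}, writing $N(\ell,\eta)K_{\ell,\eta}$ as a difference of two Christoffel--Darboux kernels \eqref{eq:Chris Darb}. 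Applying the classical uniform bound for Jacobi polynomials, $|P_\ell^{(\alpha,\beta)}(\cos\theta)|\ll \ell^{-1/2}\theta^{-\alpha-1/2}$ on $[1/\ell,\pi/2]$ with $\alpha=d/2$, yields
\[ N(\ell)|K_{\le\ell}(\theta)|\ll \ell^{d/2-1/2}\theta^{-(d+1)/2}. \]
Since $N(\ell,\eta)\asymp \eta\ell^{d-1}$, this gives
\[ |K_{\ell,\eta}(\theta)|\ll \frac{\ell^{(d-1)/2}\theta^{-(d+1)/2}}{\eta\ell^{d-1}}=\frac{(\theta\ell)^{-(d-1)/2}}{\eta\theta}. \]
Taking the minimum of the two estimates, $\min\{1+\eta\theta,(\eta\theta)^{-1}\}\le 2$, so $|K|\ll(\theta\ell)^{-(d-1)/2}\le T^{-\delta_1(d-1)/2}$. (For small $\eta$ we use the first bound, for large $\eta$ the second.)

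\emph{Case (a), general manifolds.} We use \eqref{eq:covar rand wav asymp off}, whose error term is
\[ (rT)^{-\frac{d-1}{2}}\Big(\eta r+\frac{1}{rT}\Big)+\frac{1}{\eta}. \]
Since $\eta>T^{\delta_0}$, the term $1/\eta$ is at most $T^{-\delta_0}$. The main obstacle is again the term $\eta r$ when $r\gg1/\eta$, together with the range $r\gtrsim T^{-\epsilon}$ where \eqref{eq:covar rand wav asymp off} is not designed to be sharp. We handle it by a dyadic/energy-splitting argument: decompose the window $[T-\eta,T]$ into sub-windows of width $\eta'\in[T^{\delta_0/2},\eta]$, chosen according to $r$. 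Then $K_{T,\eta}$ is a weighted average of the kernels $K_{T_i,\eta'}$, with weights $N(T_i,\eta')/N(T,\eta)$. These kernels share the same leading term up to the phase $\cos(rT_i+\gamma_d)$, and averaging the phases over the sub-windows produces cancellation.

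For $r\le T^{-\delta_1'}$ we choose $\eta'=r^{-1}T^{-\delta_1}$, which is at least $T^{\delta_0/2}$ provided $\delta_1'$ is small relative to $\delta_0$. Each sub-kernel is then bounded by
\[ (rT)^{-\frac{d-1}{2}}\big(1+T^{-\delta_1}\big)+\frac{1}{\eta'}, \]
and $1/\eta'=rT^{\delta_1}$ is small in this range.

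For macroscopic $r\ge T^{-\delta_1'}$ we instead invoke the standard Hörmander/Sogge local Weyl law. It gives a pointwise remainder $O(T^{d-1})$ for the spectral function $\sum_{\lambda_j\le T}\varphi_j(x)\varphi_j(y)$ off the diagonal, with the main term bounded by $T^{(d-1)/2}r^{-(d+1)/2}$. Dividing by $N(T,\eta)\gg\eta T^{d-1}$ gives $|K|\ll 1/\eta+T^{-(d+1)/2}r^{-(d+1)/2}\ll T^{-\delta_0}+T^{-c}$.

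Collecting both ranges, choose $\delta_1$ smaller than all the exponents that arose. This gives $|K_{T,\eta}(x,y)|\le C_1T^{-\delta_1}$ for $r>T^{-(1-\delta_1)}$. The step we expect to be delicate is matching the range of validity of \eqref{eq:covar rand wav asymp off} with the macroscopic Weyl-remainder regime, and doing so uniformly in $\eta$.
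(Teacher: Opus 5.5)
Your proof is essentially correct, but it takes a different route from the paper's.

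\textbf{The paper's route.} It handles all distances at once. From the uniform local Weyl law \eqref{eq:loc Wey SW} it writes $c_d'K_{T,\eta}=\frac{g(T)-g(T-\eta)}{\eta T^{d-1}}+O(\eta^{-1})$ with $g(t)=r^{-d/2}t^{d/2}J_{d/2}(rt)$. The Bessel recurrence gives $g'(t)=r^{1-d/2}t^{d/2}J_{(d-2)/2}(rt)$. The Mean Value Theorem with $|J_\nu(t)|\ll t^{-1/2}$ then yields $|K_{T,\eta}|\ll (rT)^{-(d-1)/2}+\eta^{-1}$. The sphere is handled ``along similar lines'' via Lemma \ref{lem:full band kern asymp}(ii.).

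\textbf{Your route.} Rather than estimating $g(T)-g(T-\eta)$ directly, you take the minimum of two bounds. The first is the monochromatic asymptotics, useful when $\eta r\lesssim 1$. The second is a triangle-inequality bound on the two full-band kernels, which gains a factor $(\eta r)^{-1}$ when $\eta r\gtrsim 1$. On the sphere this is clean and needs only Szeg\H{o}'s uniform bound $P_n^{(\alpha,\beta)}(\cos\theta)=O(n^{-1/2}\theta^{-\alpha-1/2})$, plus the trivial bound $O(n^{\alpha})$ when $\theta<c/n$ for the degree $\ell-\eta$ term. Arguably this is more transparent than the paper's one-line reference.

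\textbf{Case (a).} On general manifolds the window splitting is unnecessary, and it is where your write-up is loosest. For macroscopic $r$ you already bound the two Bessel main terms of \eqref{eq:loc Wey SW} separately. That same bound gives $|K_{T,\eta}|\ll \frac{(rT)^{-(d-1)/2}}{\eta r}+\frac{1}{\eta}$ for every $r\ge 1/T$. Combined with Proposition \ref{prop:asymp covar waves}(ii.) for $\eta r\le 1$, this reproduces the paper's bound without any sub-windows.

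If you keep the splitting, four points need fixing.
\begin{enumerate}
\item You bound each sub-window's remainder separately, so the error is $1/\eta'$ rather than $1/\eta$. The constraint you need is therefore $\delta_1'>\delta_1$, which makes $1/\eta'=rT^{\delta_1}\le T^{\delta_1-\delta_1'}$ small. Demanding $\eta'\ge T^{\delta_0/2}$ is unnecessary; it would in fact require $\delta_1'$ large, not small, relative to $\delta_0$.
\item When $\eta$ is comparable to $T$, the lower sub-windows have $T_i\ll T$, even $rT_i<1$. Your per-window bound $(rT)^{-(d-1)/2}$ fails there. You must instead use the weights $N(T_i,\eta')/N(T,\eta)\asymp \eta' T_i^{d-1}/(\eta T^{d-1})$ together with $|K_{T_i,\eta'}|\ll 1$.
\item No phase cancellation is needed, since $K_{T,\eta}$ is a convex combination of the $K_{T_i,\eta'}$.
\item The macroscopic bound should read $\eta^{-1}\bigl(1+T^{-(d-1)/2}r^{-(d+1)/2}\bigr)$. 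This slip is harmless.
\end{enumerate}

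\textbf{A caveat shared with the paper.} Both routes give $|K_{T,\eta}|\ll (rT)^{-(d-1)/2}+\eta^{-1}$. At distances just above $T^{-(1-\delta_1)}$ this is only $T^{-\delta_1(d-1)/2}$, and that is sharp: for example $|P_\ell(\cos\theta)|\asymp(\ell\theta)^{-1/2}$ at its peaks on $\Sc^2$. So for $d=2$ your final step, choosing $\delta_1$ smaller than all the exponents that arose, is circular. The same is true of the final equality in \eqref{eq:g(T)-g(T-eta) est}. What is actually proved is $|K_{T,\eta}|\le C_1 T^{-\delta_1(d-1)/2}$, or equivalently a version with two decoupled exponents. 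That suffices for the later application after shrinking $I$.
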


\noindent In fact, it is possible to establish a stronger estimate of the form $|K_{T,\eta}(x,y)| = O( (rT)^{-\frac{d-1}{2}} )$ via a more technically demanding routine, but it will not be required.

\subsection{Auxiliary lemmas}
Towards the proofs of Propositions \ref{prop:asymp covar waves}-\ref{prop:asymp covar spher} we present two auxiliary lemmas. The first one deals with the asymptotics of the covariance kernel $K_{\le T}$ corresponding to the fully banded regime, separately for the random waves on generic smooth manifolds and the spheres. It is easy to express the covariance kernel $K_{T,\eta}$ in terms of $K_{\le T}$ as follows:
\begin{equation}
\label{eq:covar rec full band gen}
N(T,\eta)K_{T,\eta}(x,y) = N(T)K_{\le T}(x,y)-N(T-\eta)K_{\le T-\eta}(x,y), 
\end{equation}
cf. \eqref{eq:covar rec full band spher}.

\begin{lemma} 
\label{lem:full band kern asymp}
$\, $

\begin{enumerate}[i.] 
 
\item
For every $T\ge 1$ and $x,y\in\Mcc$, one has
\begin{equation*}
K_{\le T}(x,y) =   c_{d}\cdot \frac{T^{d}}{ N(T)}\cdot \frac{J_{d/2}(rT)}{(rT)^{d/2}}  + O\left(T^{-1}\right) = 
\frac{(2\pi)^{d/2}}{V_{d}}\cdot \frac{J_{d/2}(rT)}{(rT)^{d/2}}  + O\left(T^{-1}\right),
\end{equation*}
where $$c_{d} = \frac{|\Mcc|}{(2\pi)^{d/2}}$$
is a dimensional constant, and the constant implicit in the `$O$'-notation only depends on $\Mcc$.

\item Let $\Mcc=\Sc^{d}$ be the round sphere, and recall the notation in \eqref{eq:N<=ell sphere sum enrg} and \eqref{eq:covar full band sphere}. For every $\ell\ge 1$ and $x,y\in\Sc^{d}$ so that $\theta:= d(x,y)\in \left[0,\frac{\pi}{2}\right]$, one has
\begin{equation}
\label{eq:K full band Bessel}
K_{\le\ell}(\theta) = c_{d}\cdot \frac{\ell^{d}}{N(\ell)} \Big( \mu(\ell)\kappa(\theta)J_{d/2}\Big(\Big(\ell+\frac{d}{2} \Big)\theta \Big) + O\Big(\frac{1}{\theta^{(d-1)/2}\ell^{(d+3)/2}} \Big)   \Big),
\end{equation}
where $$c_{d}=\frac{|\Sc^{d}|}{2^{d}\pi^{d/2}}$$ is a dimensional constant,
\begin{equation}
\label{eq:kappa def asymp}
\kappa(\theta) =  ( \sin(\theta/2))^{-d/2} (\cos(\theta/2))^{-(d-2)/2} \Big(\frac{\theta}{\sin{\theta}}\Big)^{1/2}  = 2^{d/2}\cdot \theta^{-d/2}+O_{\theta\rightarrow 0}\left(\theta^{-d/2+1}\right),
\end{equation}
\begin{equation}
\label{eq:mu(ell) def}
\mu(\ell) = \frac{ (\ell + d/2)^{-d/2+1}  (\ell+d-1)! }{ \ell^d \ell !  } =\ell^{-d/2}\cdot \Big(1+ O \Big( \frac{1}{\ell} \Big)\Big),
\end{equation}
and
\begin{equation*}
\frac{\ell^{d}}{N(\ell)} = \frac{(2\pi)^{d}}{|\Sc^{d}|\cdot V_{d}}\cdot \Big(1+ O \Big( \frac{1}{\ell} \Big) \Big),
\end{equation*}
and the constant implicit in the `$O$'-notation only depends on $d$.
\end{enumerate}

\end{lemma}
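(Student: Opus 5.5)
Part (i.) follows from the local Weyl law for the spectral function, written in the Fourier-integral form of Hörmander's theorem. Hörmander's parametrix for $e^{-it\sqrt{-\Delta}}$ at short times $|t|<\inj(\Mcc)$ gives the local Weyl law with sharp remainder. In the form we need, it reads
\[ \sum_{\lambda_j\le T}\varphi_j(x)\varphi_j(y) = \frac{1}{(2\pi)^d}\int_{|\xi|_{g_y}\le T} e^{i\langle \exp_y^{-1}(x),\xi\rangle}\,d\xi + O(T^{d-1}), \]
uniformly in $x,y\in\Mcc$. We apply it inside geodesic normal coordinates, where the phase is $\langle \exp_y^{-1}(x),\xi\rangle$, and note that $|\exp_y^{-1}(x)|=r$. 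The main term is the Fourier transform of the indicator of the ball of radius $T$ in $\R^d$. This is the classical identity
\[ \int_{|\xi|\le T}e^{i\langle v,\xi\rangle}d\xi=(2\pi)^{d/2}T^{d}\frac{J_{d/2}(|v|T)}{(|v|T)^{d/2}}. \]
Multiplying by $|\Mcc|/N(T)$ then gives the first expression, with $c_d=|\Mcc|/(2\pi)^{d/2}$. The remainder becomes $O(T^{d-1}/N(T))=O(T^{-1})$ after we use $N(T)\asymp T^d$. The second expression follows by inserting Weyl's law $N(T)=\frac{V_d|\Mcc|}{(2\pi)^d}T^d+O(T^{d-1})$. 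The discrepancy this produces is $O(T^{-1})\cdot|J_{d/2}(rT)/(rT)^{d/2}|=O(T^{-1})$, and it is absorbed into the error. The one point that needs care, and the place I expect the main difficulty in (i.), is that the remainder is uniform off the diagonal, including for $x,y$ far apart. For $d(x,y)\ge\inj(\Mcc)/2$ we bound the kernel itself by $O(T^{d-1})$, which again follows from Hörmander's theorem (the kernel is smoothed at scale $1/T$ with no singularity away from the diagonal at short times). The Bessel term there is also $O(T^{-(d+1)/2})$, which is $O(T^{-1})$ because $d\ge 1$. So the formula holds trivially in that range as well.

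For (ii.) we start from the exact Christoffel--Darboux formula \eqref{eq:Chris Darb} and apply the Hilb-type (Szeg\H{o} Theorem 8.21.12) asymptotic for Jacobi polynomials. With $N=\ell+(\alpha+\beta+1)/2=\ell+d/2$, $\alpha=d/2$ and $\beta=(d-2)/2$, it reads
\[ (\sin\tfrac\theta2)^{\alpha}(\cos\tfrac\theta2)^{\beta}P^{(\alpha,\beta)}_\ell(\cos\theta) = N^{-\alpha}\frac{\Gamma(\ell+\alpha+1)}{\ell!}\Big(\frac{\theta}{\sin\theta}\Big)^{1/2}J_\alpha(N\theta)+R, \]
valid for $\theta\in[c/\ell,\pi/2]$. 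The remainder is $R=O(\theta^{1/2}\ell^{-3/2})$ in that range; for $\theta\le c/\ell$ it is $R=O(\theta^{\alpha+2}\ell^{\alpha})$. Dividing by $(\sin\frac\theta2)^{\alpha}(\cos\frac\theta2)^{\beta}$ produces the factor $\kappa(\theta)$ in \eqref{eq:kappa def asymp}. Multiplying by $\Gamma(\ell+d)/\Gamma(\ell+d/2)$ and collecting the Gamma factors gives $\mu(\ell)$ exactly as defined in \eqref{eq:mu(ell) def}, up to the normalising $\ell^d$. The remainder becomes $O(\theta^{-(d-1)/2}\ell^{-3/2})\cdot\ell^{d/2}\cdot\ell^{-d}$, which matches the stated $O(\theta^{-(d-1)/2}\ell^{-(d+3)/2})$ after the factor $\ell^d/N(\ell)$ is pulled out. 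In the range $\theta\lesssim1/\ell$ the remainder is even smaller.

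The expansion $\kappa(\theta)=2^{d/2}\theta^{-d/2}(1+O(\theta))$ is a Taylor expansion of the sine and cosine factors. The claim $\mu(\ell)=\ell^{-d/2}(1+O(1/\ell))$ follows from $(\ell+d-1)!/\ell!=\ell^{d-1}(1+O(1/\ell))$. The asymptotic for $\ell^d/N(\ell)$ follows by summing \eqref{eq:n dim spher harm def}: $N(\ell)=\frac{2}{d!}\ell^d(1+O(1/\ell))$, and we then check that $|\Sc^d|V_d\cdot 2/d!=(2\pi)^d/d!\cdot\ldots$ matches the stated constant via $|\Sc^d|=(d+1)V_{d+1}$. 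Finally, the constant $c_d$ is pinned down by the normalisation $K_{\le\ell}(0)=1$, using $J_{d/2}(z)\sim z^{d/2}/(2^{d/2}\Gamma(d/2+1))$. In (ii.) the main obstacle is bookkeeping: the remainders must be uniform across the transition region $\theta\asymp1/\ell$.
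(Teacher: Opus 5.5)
Your proposal is correct and follows essentially the paper's argument. For (i.) you combine a local Weyl law with Bessel main term and Weyl's law. The paper simply cites a version uniform over all $x,y\in\Mcc$ from \cite{SW}, whereas you derive it from H\"ormander's parametrix in normal coordinates and handle $d(x,y)\ge \inj(\Mcc)/2$ separately via the off-diagonal $O(T^{d-1})$ bound. For (ii.) you use the Christoffel--Darboux formula and Szeg\H{o}'s Hilb-type asymptotic (8.21.17), absorb the $\theta\lesssim 1/\ell$ remainder into $O(\theta^{-(d-1)/2}\ell^{-3/2})$, and fix $c_d$ via $K_{\le\ell}(0)=1$; this is exactly as in the paper. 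Your garbled constant check for $\ell^d/N(\ell)$ reduces to the identity $|\Sc^d|\,V_d=2(2\pi)^d/d!$, which does hold.
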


\begin{proof}
First we prove Lemma \ref{lem:full band kern asymp}(i.). Recall Weyl's law (with quantitative error term) for the spectral function \eqref{eq:N(T) spec func}:
\begin{equation}
\label{eq:Weyl quant}
N(T) = \frac{V_{d}\cdot|\Mcc|}{(2\pi)^{d}}\cdot T^{d} + O\left(T^{d-1}\right),
\end{equation}
and its {\em local} version as stated\footnote{The factor of $(2\pi)^{-d/2}$ is missing in \cite{SW}.} ~\cite[Section 2.1]{SW}: uniformly for $T\ge 1$ and $x,y\in\Mcc$, one has
\begin{equation}
\label{eq:loc Wey SW}
\sum\limits_{\lambda_{j}\le T}\varphi_{j}(x)\cdot \varphi_{j}(y) = \frac{T^{d}}{(2\pi)^{d}}\cdot A_{d}(T\cdot d(x,y)) + O(T^{d-1}), 
\end{equation}
where for $t>0$,
\begin{equation}
\label{eq:Ad Fourier Bessel}
A_{d}(t) =  (2\pi)^{d/2}\frac{J_{d/2}(t)}{t^{d/2}}
\end{equation}
is the (radial) Fourier transform of the indicator of the unit $d$-ball, which is bounded (see Lemma \ref{lem:aux simple Bessel}(ii.)). 
Then both estimates of Lemma \ref{lem:full band kern asymp}(i.) follow upon substituting \eqref{eq:Ad Fourier Bessel} into the asymptotics \eqref{eq:loc Wey SW}, dividing the error term by \eqref{eq:Weyl quant}, 
and recalling the definition \eqref{eq:K covar def expr} of the covariance kernel.

\vspace{2mm}

Now we turn to proving Lemma \ref{lem:full band kern asymp}(ii.). Recall the exact expression \eqref{eq:Chris Darb} for $K_{\le\ell}(\theta)$ in terms of the Jacobi polynomial $P_\ell^{(\alpha,\beta)}$, and
denote 
\begin{equation}
\label{eq:M=ell+d/2}
M:=\ell+\frac{d}{2}.
\end{equation}
An application of the general `Hilb'-type asymptotics \cite[Formula (8.21.17)]{szego} for $P_\ell^{(\alpha,\beta)}$ with $\alpha=\frac{d}{2}$ and $\beta = \frac{d}{2}-1$ yields the uniform asymptotics
\begin{equation}
\label{eq:Jacobi pol approx}
\begin{split}
P^{(d/2,(d-2/2))}(\cos{\theta}) &= \sin\left(\frac{\theta}{2}\right)^{-\frac{d}{2}}\cos\left(\frac{\theta}{2}\right)^{-\frac{d-2}{2}}M^{-\frac{d}{2}}\frac{\Gamma(M+1)}{\ell!}
\left(\frac{\theta}{\sin(\theta)}\right)^{1/2}J_{d/2}\left(M\theta\right) + \epsilon_{\ell}(\theta)
\\&= \sin\left(\frac{\theta}{2}\right)^{-\frac{d}{2}}\cos\left(\frac{\theta}{2}\right)^{-\frac{d-2}{2}}M^{-\frac{d}{2}+1}\frac{\Gamma(M)}{\ell!}
\left(\frac{\theta}{\sin(\theta)}\right)^{1/2}J_{d/2}\left(M\theta\right) + \epsilon_{\ell}(\theta),
\end{split}
\end{equation}
where 
\begin{equation*}
\epsilon_{\ell}(\theta) =\begin{cases}
O\left(\frac{1}{\theta^{(d-1)/2}\ell^{3/2}}\right) &\theta>1/\ell , \\
O\left(\theta^{2}\ell^{d/2}\right) &\theta\le 1/\ell.
\end{cases}
\end{equation*}
We observe that, for $\theta \le \frac{1}{\ell}$, the inequality $$\epsilon_{\ell}(\theta) =  O\Big(\frac{1}{\theta^{(d-1)/2}\ell^{3/2}}\Big)$$ holds trivially, since in this range 
$$\theta^{2}\ell^{d/2} \le \ell^{d/2-2}\le \frac{1}{\theta^{(d-1)/2}\ell^{3/2}}.$$ Hence the error term $\epsilon_{\ell}(\theta)$ in \eqref{eq:Jacobi pol approx} may be replaced by $O(\frac{1}{\theta^{(d-1)/2}\ell^{3/2}})$.

Now, since, thanks again to Weyl's law \eqref{eq:Weyl quant}, $$\frac{\Gamma(\ell+d)}{N(\ell)\cdot  \Gamma(\ell+d/2)} \asymp \frac{1}{\ell^{d/2}},$$
we may obtain Lemma \ref{lem:full band kern asymp}(ii.) by 
substituting \eqref{eq:Jacobi pol approx} into \eqref{eq:Chris Darb}. Indeed, the main term of \eqref{eq:K full band Bessel} coincides up to a dimensional constant (on recalling \eqref{eq:M=ell+d/2}), and the error term 
\[ \frac{1}{\ell^{d/2}}\epsilon_{\ell}(\theta) = O\Big(\frac{1}{\theta^{(d-1)/2}\ell^{(d+3)/2}}\Big) \]
 is as claimed. Finally, we may recover the dimensional constant in \eqref{eq:K full band Bessel} via the unit variance constraint $K_{\le \ell}(0)=1$, and Lemma \ref{lem:aux simple Bessel}(ii.) below.
\end{proof}

The next lemma deals with some standard asymptotic expressions for the usual Bessel $J$ functions and their relation to the relevant Jacobi polynomials. This will allow us to pass the asymptotics in Lemma \ref{lem:full band kern asymp} from the fully banded regime to arbitrary bands.

\begin{lemma}
\label{lem:aux simple Bessel}
Suppose $d \ge 0$.
\begin{enumerate}[i.]

\item 
As $t\rightarrow \infty$, one has (with $\gamma_{d}$ as in \eqref{eq:gammad offset def})
 $$J_{d/2}(t) = \sqrt{\frac{2}{\pi}}\cdot \frac{\cos(t+\gamma_{d})}{t^{1/2}} + O\left( t^{-3/2} \right).$$

\item As $t \rightarrow 0$, 
\begin{equation*}
J_{d/2}(t) = c_{d}t^{d/2}+O\left(t^{d/2+1}\right), \qquad  c_{d}=  \frac{V_{d}}{(2\pi)^{d/2}} .
\end{equation*}

\item One has $$J_{d/2}(t) -(1-\delta)^{d/2} J_{d/2}((1-\delta)t) = \delta t\cdot J_{(d-2)/2}(t) + O \big( \delta^{2}t^{3/2}  \big),$$ uniformly for $0\le \delta\le 1$, $t>0$.
\end{enumerate}

\end{lemma}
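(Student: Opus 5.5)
Parts (i.) and (ii.) are classical facts about Bessel functions, and I would only recall them. For (i.), use Hankel's asymptotic expansion $J_\nu(t)=\sqrt{2/(\pi t)}\,\bigl(\cos(t-\tfrac{\nu\pi}{2}-\tfrac{\pi}{4})+O(t^{-1})\bigr)$ with $\nu=d/2$. The phase is $-\tfrac{d\pi}{4}-\tfrac{\pi}{4}$, which differs from $\gamma_d=\tfrac{(1-d)\pi}{4}$ by $-\tfrac{\pi}{2}$. I would check this offset against the convention in \eqref{eq:gammad offset def}; it only matters up to the sign and phase conventions used in Proposition \ref{prop:asymp covar waves}. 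For (ii.), use the power series $J_\nu(t)=\frac{(t/2)^\nu}{\Gamma(\nu+1)}(1+O(t^2))$. Then check that $\frac{2^{-d/2}}{\Gamma(d/2+1)}=\frac{V_d}{(2\pi)^{d/2}}$, which holds because $V_d=\pi^{d/2}/\Gamma(d/2+1)$.

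The substantive part is (iii.). Set $g(s):=s^{d/2}J_{d/2}(st)$ for $s\in[1-\delta,1]$, so the left-hand side equals $g(1)-g(1-\delta)$. The standard identity $\frac{d}{dz}\bigl(z^{\nu}J_\nu(z)\bigr)=z^\nu J_{\nu-1}(z)$ gives
\[ g'(s)=t^{-d/2}\frac{d}{ds}\bigl((st)^{d/2}J_{d/2}(st)\bigr)=t\, s^{d/2}J_{(d-2)/2}(st). \]
Hence $g(1)-g(1-\delta)=\delta\, t J_{(d-2)/2}(t)+R$, where
\[ R=-\int_{1-\delta}^1 (s-(1-\delta))\,g''(s)\,ds. \]
By Taylor's theorem, $|R|\le \tfrac{\delta^2}{2}\sup_{s\in[1-\delta,1]}|g''(s)|$.

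The remaining work is to bound $g''$. Differentiating again gives
\[ g''(s)=t\bigl(\tfrac d2 s^{d/2-1}J_{(d-2)/2}(st)+t s^{d/2}J'_{(d-2)/2}(st)\bigr). \]
By (i.) and the recurrence for $J'_\nu$, one has $|J_\nu(z)|,|J'_\nu(z)|\ll \min(1,z^{-1/2})$ for $\nu\ge 0$ and $z\ge 0$. The case $d=1$, where $\nu=-1/2$ appears, I would treat separately, since then $J_{-1/2}(z)=\sqrt{2/(\pi z)}\cos z$ explicitly; alternatively I would restrict to $d\ge2$, which is all the paper needs.

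The main obstacle is uniformity when $s$ is small, i.e.\ $\delta$ near $1$, because then $st$ need not be large. I would split into two cases.

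\emph{Case $\delta\le 1/2$.} Here $s\ge 1/2$, so $|g''(s)|\ll t+t^2\min(1,(t/2)^{-1/2})$. For $t\ge1$ this is $\ll t^{3/2}$, giving the claimed $O(\delta^2t^{3/2})$. For $t<1$ the bound is $\ll t$, which is not $O(t^{3/2})$. So for small $t$ I would instead use the small-argument behaviour $J_{(d-2)/2}(z)\asymp z^{(d-2)/2}$ together with (ii.). Alternatively, one can note that in this range each term on the left-hand side is individually $O(\delta t^{d/2})$ and compare more carefully. If this small-$t$ regime resists, I would accept it: only the regime $t\gtrsim1$ is used later, with $t=rT$, so I would state the lemma for $t\ge1$.

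\emph{Case $\delta>1/2$.} Here $\delta^2t^{3/2}\asymp t^{3/2}$, so it suffices to bound each of the three terms $J_{d/2}(t)$, $(1-\delta)^{d/2}J_{d/2}((1-\delta)t)$ and $\delta tJ_{(d-2)/2}(t)$ crudely by $O(1+t^{1/2})\ll t^{3/2}$, again for $t\gtrsim1$.
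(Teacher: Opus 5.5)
Your treatment of (iii.) is essentially the paper's argument: a second-order Taylor expansion whose linear term is identified through the recurrence $J'_{d/2}(t)=J_{(d-2)/2}(t)-\frac{d}{2t}J_{d/2}(t)$. Folding $(1-\delta)^{d/2}$ into $g(s)=s^{d/2}J_{d/2}(st)$ is neater than the paper's separate expansions of $(1-\delta)^{d/2}$ and $J_{d/2}((1-\delta)t)$. It is the same device the paper itself uses for $g_r$ in the proof of Corollary \ref{cor:decay correlations}. Your crude bound for $\delta>1/2$ also covers a regime the paper's sketch passes over: its remainder $\delta^2t^2\sup_{\xi\in[(1-\delta)t,t]}|J''_{d/2}(\xi)|$ is $O(\delta^2t^{3/2})$ only for $\delta$ bounded away from $1$.

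Both of your hesitations point to genuine defects of the statement, not of your method.

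In (i.), $\gamma_d$ is the correct phase for $J_{(d-2)/2}$, which is the only index to which the paper actually applies (i.). For $J_{d/2}$ itself one gets $\sqrt{2/\pi}\,\sin(t+\gamma_d)\,t^{-1/2}+O(t^{-3/2})$.

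In (iii.), the small-$t$ regime cannot be rescued when $d=2$. There $g''(s)=t\,(J_0(st)-st\,J_1(st))$, and indeed
\[ J_1(t)-(1-\delta)J_1((1-\delta)t)-\delta t\,J_0(t)=-\tfrac12\delta^2t+O(\delta^2t^3)\qquad(t\to0), \]
which is not $O(\delta^2t^{3/2})$. So restricting to $t\ge1$ is necessary, not merely convenient. It is also sufficient for the paper: there $t=rT$ or $t=M\theta$, and the cases with $t<1$ are handled trivially at the end of the proofs of Propositions \ref{prop:asymp covar waves} and \ref{prop:asymp covar spher}.

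One small correction: $|J'_\nu(z)|\ll\min(1,z^{-1/2})$ fails near $z=0$ for $0<\nu<1$. For example $J'_{1/2}(z)\sim(2\pi z)^{-1/2}$, which is the case $d=3$. You only use the bound at $z=st\ge1/2$, where it is valid, so nothing breaks.
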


\begin{proof}
Lemma \ref{lem:aux simple Bessel}(i.) follows from keeping the leading term in the classical expansion at infinity for the Bessel $J$ functions, see e.g.\ \cite[(1.71.1)]{szego}. Lemma \ref{lem:aux simple Bessel}(ii.) is standard. 
Lemma \ref{lem:aux simple Bessel}(iii.) is derived by writing $$(1-\delta)^{d/2}= 1- \frac{d}{2}\delta +O\left(\delta^{2}\right),$$ Taylor expanding the function $J_{d/2}(\cdot)$ around $t$
$$J_{d/2}((1-\delta)t)  = J_{d/2}(t)-\delta t\cdot J_{d/2}'(t) + O \Big(\delta^{2}t^{2} \sup\limits_{\xi\in [(1-\delta)t,t]}J_{d/2}''(\xi)\Big),$$
and bearing in mind the identity 
\begin{equation}
\label{eq:Bessel der recurr}
J'_{d/2}(t) =J_{(d-2)/2}(t)  - \frac{d}{2t}  J_{d/2}(t) . \qedhere
\end{equation}
\end{proof}

\subsection{Proofs of Propositions \ref{prop:asymp covar waves}-\ref{prop:asymp covar spher} and Corollary \ref{cor:decay correlations}}

\begin{proof}[Proof of Proposition \ref{prop:asymp covar waves}]

First, for the diagonal estimate of Proposition \ref{prop:asymp covar waves}(i.), we employ \eqref{eq:covar rec full band gen}, and substitute the asymptotics of Lemma \ref{lem:full band kern asymp}(i.) with $r=0$,
while bearing in mind Lemma \ref{lem:aux simple Bessel}(ii.). Since for $\eta>0$ one has $$T^{d}-(T-\eta)^d\asymp \eta T,$$ it follows that: 
\begin{equation*}
K_{T,\eta}(x,x) = \frac{T^{d}-(T-\eta)^{d} + O\left(T^{d-1}\right)}{T^{d}-(T-\eta)^{d} + O\left(T^{d-1}\right)} = \frac{\left(T^{d}-(T-\eta)^{d}\right)\left(1+O\left(\frac{1}{\eta}\right)\right)}{\left(T^{d}-(T-\eta)^{d}\right)\left(1+O\left(\frac{1}{\eta}\right)\right)}
=1+O\left(\frac{1}{\eta}\right),
\end{equation*}
provided that we chose $\eta>C$ with $C$ sufficiently large so that the denominator does not blow up.

For the off-diagonal estimate of Proposition \ref{prop:asymp covar waves}(ii.) we recall the tacit assumption, with no loss of generality, that $T-\eta$ is not an eigenvalue. We denote $$\alpha:=\frac{d-2}{2}.$$
Then by \eqref{eq:covar rec full band gen} and Lemma \ref{lem:aux simple Bessel}(iii.), we have
\begin{align*}
N(T,\eta) K_{T,\eta}(x,y) &=  N(T) K_{\le T}(x,y) - N(T-\eta)K_{\le T-\eta}(x,y)   \\
& = \frac{|\Mcc|}{(2\pi)^{d/2}}\cdot T^{d/2} r^{-d/2} \left( J_{d/2}(rT) - (1 - \eta/T)^{d/2} J_{d/2}(( 1 - \eta/T) rT) \right) + O(T^{d-1})
\\&= \frac{|\Mcc|}{(2\pi)^{d/2}}\cdot \eta T^{d/2} r^{-(d-2)/2} J_{\alpha}(rT) + O( T^{(d-1)/2}\eta^{2}r^{-(d-3)/2}  + T^{d-1})  .
\end{align*}
Now we invoke Lemma \ref{lem:aux simple Bessel}(i.) to yield
\begin{equation}
\label{eq:asynp NKTeta}
\begin{split}
&N(T,\eta) K_{T,\eta}(x,y) =\sqrt{\frac{2}{\pi}} \frac{|\Mcc|}{(2\pi)^{d/2}}\cdot \eta T^{(d-1)/2} r^{-(d-1)/2} \Big(\cos\left(rT+\gamma_{d}\right) + O\Big( \frac{1}{rT}  \Big) \Big)
\\& \qquad \qquad \qquad \qquad \qquad + O ( T^{(d-1)/2}\eta^{2}r^{-(d-3)/2}+ T^{d-1}  ) 
\\&= \sqrt{\frac{2}{\pi}} \frac{|\Mcc|}{(2\pi)^{d/2}}\cdot \eta T^{(d-1)/2} r^{-(d-1)/2} \Big(\cos\left(rT+\gamma_{d}\right) + O \Big( \frac{1}{rT} + \eta r + \frac{(rT)^{\frac{d-1}{2}}}{\eta}  \Big) \Big).
\end{split}
\end{equation}

To obtain the statement of Proposition \ref{prop:asymp covar waves}(ii.) it remains to divide \eqref{eq:asynp NKTeta} by $N(T,\eta)$ (or, rather, its asymptotics). By invoking Weyl's law \eqref{eq:Weyl quant}, it is easy to infer
that $$N(T,\eta)= d\eta T^{d-1} +O\left(T^{d-1}\right) \sim d\eta T^{d-1},$$ provided that $\eta\rightarrow \infty$ and 
$\eta = o(T),$ that is not assumed at this stage. Instead, we write
\begin{equation*}
(2\pi)^{d}\frac{N(T,\eta)}{V_{d}|\Mcc|}= d\cdot\eta T^{d-1}\cdot \frac{1}{d} \frac{T}{\eta} \Big(1-\Big(1-\frac{\eta}{T}\Big)^{d}\Big) +O(T^{d-1})   = d\cdot \eta T^{d-1}\cdot h\Big(\frac{\eta}{T}\Big) + O(T^{d-1}), 
\end{equation*}
$$h(\delta):= \frac{1}{d}\cdot \frac{1-(1-\delta)^{d}}{\delta}.$$ We observe that the function $h(\cdot)$ is smooth on $[0,1]$, $h(0)=1$, and it is bounded between two strictly positive constants. It follows that 
$$\frac{1}{h(\delta)} = 1+O(\delta),$$ and, on recalling the assumption $\eta(T)>C$ where we have the freedom to choose $C$ arbitrarily large depending on $d$,
\begin{equation}
\label{eq:1/N(T,eta) asymp}
\frac{V_{d}|\Mcc|}{(2\pi)^{d} N(T,\eta)} = \frac{1}{d\cdot\eta T^{d-1} h(\delta)\big(1+O \big(\frac{1}{\eta} \big)  \big)} = \frac{1}{d\cdot\eta T^{d-1}}\cdot \Big(1+O \Big(\frac{\eta}{T}+\frac{1}{\eta} \Big)  \Big),
\end{equation}
with the constant involved in the `$O$'-notation only depending on $d$. 

We may infer from Proposition \ref{prop:asymp covar waves}(i.) that $K_{T,\eta}(x,y)$ is uniformly bounded by a constant only depending on $d$, via Cauchy-Schwarz. 
Hence, provided that $r\ge \frac{1}{T}$, we have $$\frac{\eta}{T}+\frac{1}{\eta} \le r\eta + \frac{(rT)^{\frac{d-1}{2}}}{\eta},$$
and if otherwise $r \le \frac{1}{T}$, $$K_{T,\eta}(x,y) \ll 1\le \frac{1}{rT}.$$ 
The statement of Proposition \ref{prop:asymp covar waves}(ii.) now follows upon dividing both sides of \eqref{eq:asynp NKTeta} by 
$N(T,\eta)$, and using the asymptotic expression \eqref{eq:1/N(T,eta) asymp} 
for $\frac{1}{N(T,\eta)}$.
\end{proof}

\begin{proof}[Proof of Proposition \ref{prop:asymp covar spher}]

First, \eqref{eq:var spher==1} is a re-iteration of \eqref{eq:var ==1 spher band}. Towards the proof of \eqref{eq:covar spher asymp off},  
recall that one may recover $K_{\ell,\eta}(x,y)$ from $K_{\ell}(x,y)$ via \eqref{eq:covar rec full band spher}, and that $N(\ell,\eta)$ is the number of energy levels \eqref{eq:N(ell,eta) def spher} lying in $(\ell-\eta,\ell]$
(under the same abuse of notation of \S\ref{sec:asymp covar stat}). We invoke Lemma \ref{lem:full band kern asymp}(ii.) to write
\begin{equation}
\label{eq:Nell,eta Kell,eta approx}
\begin{split}
&N(\ell,\eta)\cdot K_{\ell,\eta}(\theta) = c_d \kappa(\theta) \left( \ell^d \mu(\ell) J_{d/2}( M \theta) - (\ell-\eta)^d \mu( \ell - \eta) J_{d/2}( ( M - \eta ) \theta) \right) + O\Big(\frac{\ell^{(d-3)/2}}{\theta^{(d-1)/2}}\Big),   
\end{split}
\end{equation}
with $$c_{d}= \frac{|\Sc^{d}|}{2^{d}\pi^{d/2}},\;\;\; M:=\ell+\frac{d}{2}.$$
Abbreviating $\delta =  \frac{\eta}{M}$, we can rewrite \eqref{eq:Nell,eta Kell,eta approx} as
\begin{equation}
\label{eq:recover band full sphere}
N(\ell,\eta)\cdot K_{\ell,\eta}(\theta) = c_d \kappa(\theta)\ell^{d} \cdot \left( \mu(\ell) J_{d/2}( M \theta) - (1-\delta)^d \mu( \ell - \eta) J_{d/2}( M(1-\delta)\theta) \right) + O \Big(\frac{\ell^{(d-3)/2}}{\theta^{(d-1)/2}}\Big),
\end{equation}
and, further
\begin{equation}
\label{eq:mu J approx}
\begin{split}
&\mu(\ell) J_{d/2}( M \theta) - (1-\delta)^d \mu( \ell - \eta) J_{d/2}( M(1-\delta)\theta) \\
& \qquad =  \mu(\ell)\cdot \left( J_{d/2}( M \theta)-(1-\delta)^{d/2} \cdot J_{d/2}( M(1-\delta)\theta)\right) \\
&  \qquad \qquad + \left(\mu(\ell)(1-\delta)^{d/2}- (1-\delta)^d \mu( \ell - \eta)\right)\cdot J_{d/2}( M(1-\delta)\theta).
\end{split}
\end{equation}

Next, we approximate the former of the two terms on the r.h.s. of \eqref{eq:mu J approx}, and bound above the latter of these terms. Indeed, we denote $\alpha:= \frac{d-2}{2}$, and use Lemma \ref{lem:aux simple Bessel}(iii.),
and, further, Lemma \ref{lem:aux simple Bessel}(i.) with \eqref{eq:mu(ell) def} for approximating $\mu(\ell)\approx \ell^{-d/2}$, to yield
\begin{equation}
\label{eq:first term sphere}
\begin{split}
&\mu(\ell)\cdot \left( J_{d/2}( M \theta)-(1-\delta)^{d/2} \cdot J_{d/2}( M(1-\delta)\theta)\right) = \mu(\ell)\cdot  \Big(\eta \theta \cdot J_{\alpha}(M\theta) +  O \Big( \frac{\eta^{2}\theta^{3/2}}{\ell^{1/2}}   \Big)    \Big)
\\&= \sqrt{\frac{2}{\pi}}\cdot\frac{\eta\theta^{1/2}}{\ell^{(d+1)/2}}\Big(\cos\left(M\theta+\gamma_{d}\right)    +O\Big(\frac{1}{\ell \theta} + \eta\theta   \Big) \Big),
\end{split}
\end{equation} 
for the former term of \eqref{eq:mu J approx} (bearing in mind the obvious inequality $\frac{1}{\ell}\ll \frac{1}{\ell \theta}$). 

For the latter term of \eqref{eq:mu J approx}, we write
\begin{equation}
\label{eq:secondary term simp}
\begin{split}
&\left(\mu(\ell)(1-\delta)^{d/2}- (1-\delta)^d \mu( \ell - \eta)\right)\cdot J_{d/2}( M(1-\delta)\theta) 
\\& \qquad = (1-\delta)^{d/2}J_{d/2}( M(1-\delta)\theta) \cdot \Big(\mu(\ell)- (1-\delta)^{d/2} \mu \Big( M \Big(\frac{\ell}{M}- \delta \Big)\Big)\Big), 
\end{split}
\end{equation}  
and observe that, by interpreting the expression $\frac{(\ell+d-1)!}{\ell!}$ in \eqref{eq:mu(ell) def} as a $(d-1)$-degree polynomial, the function $\mu(\cdot)$ naturally extends to an elementary function of real variable. 
Taylor expanding 
the function $$\delta\mapsto \mu \Big( M \Big(\frac{\ell}{M}- \delta \Big) \Big)$$ around $\delta=0$ we obtain that 
\begin{equation}
\label{eq:Taylor exp mu}
\Big| |\mu(\ell)- (1-\delta)^{d/2} \mu \Big( M \Big(\frac{\ell}{M}- \delta \Big)\Big)\Big| \ll \frac{\delta M}{\ell^{d/2+1}} = \frac{\eta}{\ell^{d/2+1}},
\end{equation}
valid for $\delta<\frac{1}{2}$ (say), and trivial for $\delta>\frac{1}{2}$. We substitute \eqref{eq:Taylor exp mu} into \eqref{eq:secondary term simp}, and use the decay at infinity of the Bessel function of Lemma \ref{lem:aux simple Bessel}(i.) 
to obtain the bound
\begin{equation}
\label{eq:bound 2nd term}
\begin{split}
&\left|\left(\mu(\ell)(1-\delta)^{d/2}- (1-\delta)^d \mu( \ell - \eta)\right)\cdot J_{d/2}( M(1-\delta)\theta)\right| \\& \qquad \ll \frac{(1-\delta)^{d/2}}{(\ell (1-\delta)\theta)^{1/2}}\cdot \frac{\eta}{\ell^{d/2+1}} \ll \frac{\eta}{\ell^{(d+3)/2}\theta^{1/2}}
\end{split}
\end{equation}
for the second term of \eqref{eq:mu J approx}. 

\vspace{2mm}

We consolidate the estimate \eqref{eq:first term sphere} and the bound \eqref{eq:bound 2nd term} (that, for $\theta>\frac{1}{\ell}$ is majorised by the error term in \eqref{eq:first term sphere}), and substitute into \eqref{eq:mu J approx} to yield the estimate
\begin{equation}
\label{eq:intern expr covar}
\begin{split}
&\mu(\ell) J_{d/2}( M \theta) - (1-\delta)^d \mu( \ell - \eta) J_{d/2}( M(1-\delta)\theta) \\& \qquad \ll 
\sqrt{\frac{2}{\pi}}\cdot\frac{\eta\theta^{1/2}}{\ell^{(d+1)/2}}\Big(\cos\left(M\theta+\gamma_{d}\right)    +O\Big(\frac{1}{\ell \theta} + \eta \theta   \Big)\Big).
\end{split}
\end{equation}
It then remains to insert the estimate \eqref{eq:intern expr covar} into \eqref{eq:recover band full sphere}, and multiply it by $\frac{c_{d}\kappa(\theta)\ell^{d}}{N(\ell,\eta)},$ with $\kappa(\theta)$ as in \eqref{eq:kappa def asymp},
and divide the error term in \eqref{eq:recover band full sphere} by $N(\ell,\eta)$. 
To this end, we mind that, using the explicit spectral multiplicities \eqref{eq:n dim spher harm def} of the sphere, we find the asymptotic expression
\begin{equation*}
N(\ell,\eta) = \frac{2}{d!} \left(\ell^{d}-(\ell-\eta)^{d}\right) + O\left(\eta^{2} \ell^{d-2}\right) = \frac{2}{(d-1)!}\eta \ell^{d-1}\left(1+O\left( \frac{\eta}{\ell} \right)\right),
\end{equation*}
while $N(\ell,\eta)\asymp \eta\ell^{d-1}$, cf. \eqref{eq:1/N(T,eta) asymp}. Hence its reciprocal is
\begin{equation*}
\frac{1}{N(\ell,\eta)} = \frac{(d
-1)!}{2\eta \ell^{d-1}} \left(1+O\left( \frac{\eta}{\ell} \right)\right). 
\end{equation*}
Consolidating these estimates, we finally obtain:
\begin{equation*}
\begin{split}
&K_{\ell,\eta}(\theta) = c_d 2^{d/2-1} (d-1)! \sqrt{\frac{2}{\pi}}\cdot\frac{1}{(\theta\ell)^{\frac{d-1}{2}}}\Big(\cos\left(M\theta+\gamma_{d}\right)    +O\Big(\frac{1}{\ell \theta} + \eta \theta+\frac{\eta}{\ell}   \Big)\Big) \\& \qquad \qquad \qquad +  
O\Big(\frac{1}{\eta \ell^{\frac{d+1}{2}}\theta^{\frac{d-1}{2}}}\Big)
\\&= c_d 2^{d/2-1} (d-1)! \sqrt{\frac{2}{\pi}}\cdot\frac{1}{(\theta\ell)^{\frac{d-1}{2}}}\cdot \Big(\cos\left(M\theta+\gamma_{d}\right)    +O\Big(\frac{1}{\ell \theta} + \eta \theta+\frac{\eta}{\ell}  +  \frac{1}{\ell\eta} \Big)\Big).
\end{split}
\end{equation*}
Note that $\frac{1}{\ell\eta} \ll \frac{\eta}{\ell}$, and for $\theta \ge \frac{1}{\ell}$, $\frac{\eta}{\ell} \le \eta\theta$, whereas for $\theta\le \frac{1}{\ell}$, $\frac{1}{\ell \theta}\ge 1$, so that the estimate \eqref{eq:covar spher asymp off} is a tautology, due to the uniform boundedness of $K_{\ell,\eta}$, by Proposition \ref{prop:asymp covar spher}(i.) and Cauchy-Schwarz. 
\end{proof}

\begin{proof}[Proof of Corollary \ref{cor:decay correlations}]

First, we assume case (a), i.e. $\Mcc$ is an arbitrary smooth manifold, $\eta(T)>T^{\delta_{0}}$. We apply Lemma \ref{lem:full band kern asymp}(i.), together with \eqref{eq:covar rec full band gen} and \eqref{eq:1/N(T,eta) asymp}, 
and using the standard decay 
\begin{equation}
\label{eq:Bessel decay 1/sqrt(t)}
|J_{d/2}(t)| \ll \frac{1}{\sqrt{t}}
\end{equation}
of the Bessel function at infinity, to obtain
\begin{equation}
\label{eq:K as Bessel diff}
c_{d}'K_{T,\eta}(x,y) = \frac{\frac{J_{d/2}(rT)}{(rT)^{d/2}}\cdot T^{d} - \frac{J_{d/2}(r(T-\eta))}{(r(T-\eta))^{d/2}}\cdot (T-\eta)^{d}}{\eta\cdot T^{d-1}} + O \Big( \frac{1}{\eta}  \Big),
\end{equation}
with $r=d(x,y)$, for some dimensional constant $c_{d}'>0$. We rewrite \eqref{eq:K as Bessel diff} as
\begin{equation}
\label{eq:K as diff g}
c_{d}' K_{T,\eta}(x,y) = \frac{g(T)-g(T-\eta)}{\eta\cdot T^{d-1}}+O\Big( \frac{1}{\eta}  \Big),
\end{equation}
with 
\begin{equation}
g(t)=g_{r}(t) = \frac{J_{d/2}(rt)}{(rt)^{d/2}}\cdot t^{d/2} =r^{-d/2}\cdot t^{d/2}J_{d/2}(rt)  .
\end{equation}
Next, we bound the function $g(T)-g(T-\eta)$ in the denominator of \eqref{eq:K as diff g}. We differentiate:
\begin{equation*}
g'(t) = r^{-d/2}\cdot \Big(\frac{d}{2}t^{d/2-1}J_{d/2}(rt) +t^{d/2}J'_{d/2}(rt)\cdot r   \Big),
\end{equation*}
which, using the recurrence relation \eqref{eq:Bessel der recurr}, simplifies to the neat expression
\begin{equation*}
g'(t) = r^{1-\frac{d}{2}}t^{d/2}J_{(d-2)/2}(rt).
\end{equation*}
We now apply the Mean Value theorem to the main term of \eqref{eq:K as diff g}, to obtain the estimate
\begin{equation}
\label{eq:main term K MV}
\frac{g(T)-g(T-\eta)}{\eta\cdot T^{d-1}} = \frac{\eta\cdot r^{1-\frac{d}{2}}S^{d/2}J_{(d-2)/2}(rS)}{\eta T^{d-1}} = \frac{r^{1-\frac{d}{2}}S^{d/2}J_{(d-2)/2}(rS)}{T^{d-1}},
\end{equation} 
with some $S\in [T-\eta,T]$. We bound \eqref{eq:main term K MV} from above, on reusing \eqref{eq:Bessel decay 1/sqrt(t)}:
\begin{equation}
\label{eq:g(T)-g(T-eta) est}
\begin{split}
\frac{|g(T)-g(T-\eta)|}{\eta\cdot T^{d-1}} &= O \Big( \frac{r^{1-\frac{d}{2}}S^{d/2}}{(rS)^{1/2}T^{d-1}} \Big) = O \Big( \frac{r^{1-\frac{d}{2}}S^{d/2}}{(rS)^{1/2}T^{d-1}} \Big)
= O \Big( \frac{S^{\frac{d-1}{2}}}{r^{\frac{d-1}{2}}T^{d-1}} \Big) \\&= O\big( (rT)^{-\frac{d-1}{2}} \big) =O \big( T^{-\delta_{1}}\big),
\end{split}
\end{equation}
by the assumption $r=d(x,y) > \frac{1}{T^{1-\delta_{1}}}$, and since $S<T$. 

\vspace{2mm}

The statement of Corollary \ref{cor:decay correlations} (case (a)), with arbitrary $\delta_{1}\le \delta_{0}$, follows upon substituting the estimate \eqref{eq:g(T)-g(T-eta) est} into \eqref{eq:K as diff g}, and recalling that $\eta(T)>T^{\delta_{0}}$ by assumption. The proof for case (b) follows along similar lines, except appealing to Lemma \ref{lem:full band kern asymp}(ii.) instead of Lemma \ref{lem:full band kern asymp}(i.).
\end{proof}

\medskip

\section{Upper bound for defect concentration: Proof of Theorem \ref{t:cub}}
\label{sec:conc bnd proof 1}

\subsection{General upper bound for defect concentration}

In this section we establish an upper bound for the defect concentration that is applicable to general random fields. It will be applied to the random waves in \S~\ref{sec:appl gen bnd upper conc} below. Ahead of stating the general result, we explain the abstract setting.

\vspace{2mm}

In this section we assume that $\Xc\subseteq \Mcc$ is a compact subdomain of $\Mcc$ of volume $|\Xc|$, 
and $(f(x))_{x\in \Xc}$ is a Gaussian random field on $\Xc$, a.s.\ continuous but not necessarily centred. Let $\Hc$ be the reproducing kernel Hilbert space (RKHS) associated to $f(\cdot)$, and $i:\Hc\rightarrow L^{2}(\Xc)$ the canonical inclusion map. We denote the usual operator norm $\|i\|_{\Hc\rightarrow L^{2}}$ of $i$: 
\begin{equation}
\label{eq:I def norm inclusion}
I := \|i\|_{\Hc\rightarrow L^{2}} :=\sup\left\{\|h\|_{L^{2}(\Xc)}:\: h\in \Hc,\, \|h\|_{\Hc}=1 \right\} .
\end{equation}
Recall that $H(\cdot)$ is the sign function \eqref{eq:H Heaviside}, and define the volume-bias of $f(\cdot)$ on $\Xc$:
\begin{equation}
\label{eq:Fc def int H}
\Fc = \Fc(f) :=\frac{1}{|\Xc|} \int\limits_{\Xc} H(f(x) ) dx  .  
\end{equation}
We are now ready to state the abstract upper bound for the defect concentration:

\begin{proposition}
\label{prop:conc upper bnd gen}
Let $\Xc$, $f(\cdot)$, $\Fc$, and $I$ be as above, and further assume that $f(\cdot)$ satisfies $$\var(f(x))\ge 1$$ for every $x\in\Xc$.  
Then, for every $\varepsilon>0$, there exists a number $\delta>0$, only depending on $\varepsilon$, so that 
\begin{equation}
\label{eq:gen upper conc ineq}
\prob(\Fc> \E[\Fc] + \varepsilon) < 3\cdot e^{-\delta |\Xc|/I^{2}}.
\end{equation}

\end{proposition}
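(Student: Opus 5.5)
The plan is to use Gaussian concentration in the Cameron--Martin space $\Hc$ together with the Gaussian isoperimetric inequality. The sign functional $\Fc$ is not Lipschitz, so the main step is to show it is ``Lipschitz on a large set''. Write $f = m + \sum_k \xi_k h_k$, where $m=\E f$, $\{h_k\}$ is an orthonormal basis of $\Hc$, and the $\xi_k$ are i.i.d.\ standard Gaussians. For any $A$ with $\prob(f\in A)\ge 1/4$, the isoperimetric inequality gives
\[ \prob(f\notin A + t B_{\Hc}) \le 1-\Phi(\Phi^{-1}(1/4)+t) \le e^{-(t-c_0)^2/2}, \]
where $B_{\Hc}$ is the unit ball of $\Hc$ and $c_0$ is absolute.

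\textbf{Step 1: stability of $\Fc$ under $\Hc$-perturbations.} Let $g\in\Hc$ with $\|g\|_{\Hc}\le t$ and let $s>0$. The signs of $f$ and $f+g$ can differ at $x$ only if $|f(x)|\le s$ or $|g(x)|>s$. Hence
\[ |\Fc(f+g)-\Fc(f)| \le \frac{2}{|\Xc|}\Big(|\{|f|\le s\}| + |\{|g|>s\}|\Big). \]
By Chebyshev and the definition \eqref{eq:I def norm inclusion} of $I$, we have $|\{|g|>s\}|\le \|g\|_{L^2}^2/s^2\le I^2t^2/s^2$.

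\textbf{Step 2: the small-values set.} Each $f(x)$ is Gaussian with variance at least $1$ (and arbitrary mean), so its density is at most $(2\pi)^{-1/2}$. Therefore $\prob(|f(x)|\le s)\le s$ for every $x$, and by Fubini $\E|\{|f|\le s\}|\le s|\Xc|$. Choose $s=\varepsilon/64$. By Markov's inequality, the event $E_s=\{|\{|f|\le s\}|\le \varepsilon|\Xc|/16\}$ then has probability at least $3/4$; this is the only place where $\var f(x)\ge 1$ is used.

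\textbf{Step 3: concentration around the median.} Let $M$ be a median of $\Fc$ and set $A=\{\Fc\le M\}\cap E_s$, so that $\prob(A)\ge 1/4$. On $A+tB_{\Hc}$, Steps 1--2 give $\Fc\le M+\varepsilon/8+2I^2t^2/(s^2|\Xc|)$. Take $t^2 = \varepsilon s^2|\Xc|/(32 I^2)$, so that $t^2 = c_1\varepsilon^3|\Xc|/I^2$ and the last term is at most $\varepsilon/16$. We then obtain $\prob(\Fc>M+\varepsilon/4)\le e^{-(t-c_0)^2/2}$, and symmetrically (with $\{\Fc\ge M\}\cap E_s$) $\prob(\Fc<M-\varepsilon/4)\le e^{-(t-c_0)^2/2}$.

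\textbf{Step 4: from median to mean, and bookkeeping.} Since $|\Fc|\le 1$, we have $|\E\Fc-M|\le \varepsilon/4+4e^{-(t-c_0)^2/2}$. Split into cases according to the size of $|\Xc|/I^2$.
\begin{itemize}
\item If $|\Xc|/I^2$ is below a threshold depending on $\varepsilon$, then choosing $\delta$ small makes $3e^{-\delta|\Xc|/I^2}\ge 1$, so \eqref{eq:gen upper conc ineq} is trivial.
\item Otherwise $t$ is large, $(t-c_0)^2/2\ge t^2/4$, and $4e^{-t^2/4}\le\varepsilon/4$. Hence $\prob(\Fc>\E\Fc+\varepsilon)\le\prob(\Fc>M+\varepsilon/2)\le e^{-\delta|\Xc|/I^2}$ with $\delta\asymp\varepsilon^3$.
\end{itemize}
The constant $3$ in the statement absorbs this case split.

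I expect the main obstacle to be Step 2, namely controlling the discontinuity of $H$ by the measure of $\{|f|\le s\}$. It is handled only with constant probability via Markov rather than exponentially; this suffices because the isoperimetric argument needs $\prob(A)$ bounded below only by a constant.
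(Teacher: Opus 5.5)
Your proposal is correct and follows essentially the same route as the paper. Both arguments use Gaussian isoperimetry in $\Hc$, stability of $\Fc$ under $\Hc$-perturbations off the small-value set $\{|f|\le s\}$ (with Chebyshev and the operator norm $I$ controlling $\{|g|>s\}$), concentration about the median, and then passage to the mean.

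The differences are only streamlinings. You bound the small-value set directly by Markov, whereas the paper proves an exponential bound for its unstable event $B$ via a second isoperimetric step but then only uses $\prob(B)\le 1/4$. You also establish both tails about the median, which is what the bound $\E\Fc\ge M-\varepsilon/2$ actually requires. Your cruder median-to-mean step costs something: the final constant is $\delta\asymp\varepsilon^3/\log(1/\varepsilon)$ rather than $\varepsilon^3$, since your threshold case forces $\delta\lesssim\varepsilon^3/\log(1/\varepsilon)$. This is harmless, because the statement only asks for some $\delta$ depending on $\varepsilon$.
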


Though in what follows we will only require the {\em existence} of a number $\delta=\delta(\varepsilon)$ with properties as in Proposition \ref{prop:conc upper bnd gen}, the proof of Proposition \ref{prop:conc upper bnd gen} yields 
the quantitative dependency $\delta= c_{0}\cdot \varepsilon^{3}$, with some absolute $c_{0}>0$. By a suitable approximation, it would also be possible to prove the analogue of Proposition \ref{prop:conc upper bnd gen} with an arbitrary bounded measurable functional $\Xi(\cdot)$ in place of the sign function $H(\cdot )$ in the definition \eqref{eq:Fc def int H} of $\Fc$.

\begin{proof}

First, we assume that 
\begin{equation}
\label{eq:eps>=I^2/3/X^1/3}
\varepsilon \ge c_{0} \cdot \frac{I^{2/3}}{|\Xc|^{1/3}}
\end{equation}
with an arbitrary fixed number $c_{0}>0$. Indeed, if, otherwise, $\varepsilon < c_{0} \cdot \frac{I^{2/3}}{|\Xc|^{1/3}}$, then the inequality 
\eqref{eq:gen upper conc ineq} is a tautology with 
$\delta = \frac{\varepsilon^{3}}{c_{0}^{3}}$, since, in this case, the r.h.s. of \eqref{eq:gen upper conc ineq} is $>3\cdot e^{-1}>1$. 

\vspace{2mm}

Now we recall the Gaussian isoperimetric inequality (see e.g. ~\cite[Chapter 1]{Ledoux}). Let $(\Hc,\Fcc, \prob)$ be the probability space associated to $f(\cdot)$, and $\Phi$ be the standard Gaussian cdf. Then, for every event $A\in\Fcc$ and $s>0$, one has
\begin{equation*}
\prob(A^{+s}) \ge \Phi\left(\Phi^{-1}(\prob(A))+s\right), 
\end{equation*}
where $A^{+s}$ is the $s$-neighbourhood in $\Hc$-norm (and $\prob(A):=\prob(f\in A)$). In particular, if for some $s>0$ one has $\prob(A^{+s})\le \frac{1}{2}$, then we may infer that 
\begin{equation}
\label{eq:isop ineq upper}
\prob(A) \le \Phi(-s) \le e^{-s^{2}/2}.
\end{equation}
Similarly, one may show the `converse' statement: if $\prob(A)\ge \frac{1}{4}$, then
\begin{equation}
\label{eq:isop ineq upper perturb}
\prob\left(A^{+s}\right) \ge 1- e^{-s^{2}/4}.
\end{equation}

\vspace{2mm}

Let $B$ be the `unstable' event 
\begin{equation*}
B:= \left\{\vol\left(\left\{x\in\Xc:\: |f(x)|<\frac{\varepsilon}{4}\right\}\right)> 2\varepsilon\cdot |\Xc|\right\}.
\end{equation*}
We aim to bound the probability of $B$ from above, by applying the inequality \eqref{eq:isop ineq upper}. We observe that, by the definition \eqref{eq:I def norm inclusion} 
of $I$ as the operator norm of $i$, one has for every $h\in\Hc$,
\begin{equation*}
\|h\|_{L^{2}(\Xc)}^{2} \le I^{2}\cdot \|h\|^{2}_{\Hc},
\end{equation*}
and therefore,
\begin{equation}
\label{eq:vol h large norm}
\vol\left(\left\{x\in\Xc:\: |h(x)|>\frac{\varepsilon}{4}\right\}\right) \le \frac{\|h\|_{L^{2}(\Xc)}^{2}}{(\varepsilon/4)^{2}} \le  \frac{I^{2}\cdot \|h\|^{2}_{\Hc}}{(\varepsilon/4)^{2}},
\end{equation}
by Markov's inequality. Therefore, if, for some $f\in B$ and $h\in\Hc$ one has $\|h\|_{\Hc}<s$ with $s$ given by 
\begin{equation}
\label{eq:s perturb def}
\frac{I^{2}\cdot s^{2}}{(\varepsilon/4)^{2}}=\varepsilon\cdot |\Xc|,
\end{equation}
then it follows that 
\begin{equation}
\label{eq:C def s-neigh}
f+h \in C:=  \left\{\vol\left(\left\{x\in\Xc:\: |f(x)|<\frac{\varepsilon}{2}\right\}\right) >\varepsilon\cdot |\Xc|\right\}.
\end{equation}
To put it differently, with the choice of $s$ as in \eqref{eq:s perturb def}, one has 
\begin{equation}
\label{eq:B+s <= C}
B^{+s}\subseteq C.
\end{equation}
However, since, by the assumptions of Proposition \ref{prop:conc upper bnd gen}, for all $x\in\Xc$ we have $\var\left(f(x)\right) \ge 1$, the expectation of the volume of the set within the definition of
$C$ satisfies
\begin{equation*}
\E\left[  \vol\left(\left\{x\in\Xc:\: |f(x)|<\frac{\varepsilon}{2}\right\}\right)\right] \le \sup\limits_{x\in\Xc} \prob\left(|f(x)| < \frac{\varepsilon}{2}   \right) \cdot |\Xc| \le \frac{\varepsilon}{\sqrt{2\pi}}\cdot |\Xc| < \frac{\varepsilon\cdot |\Xc|}{2},
\end{equation*}
since the pdf of a centred Gaussian attains its maximum at the origin. Hence, by \eqref{eq:B+s <= C} and Markov's inequality, we may bound the probability of $B^{+s}$ (for the particular choice \eqref{eq:s perturb def} for $s$) by
\begin{equation}
\label{eq:prob(B+s)<=1/2}
\prob(B^{+s})\le \prob(C) \le \frac{1}{2}.
\end{equation}
Invoking \eqref{eq:isop ineq upper} with \eqref{eq:prob(B+s)<=1/2} yields
\begin{equation}
\label{eq:prob(B)<=e^-eps^{3}}
\prob(B) \le e^{-s^{2}/2} = e^{-\frac{1}{32}\varepsilon^{3}|\Xc|/I^{2}}< e^{-\frac{1}{64}\varepsilon^{3}|\Xc|/I^{2}},
\end{equation}
on substituting the value of $s$ as in \eqref{eq:s perturb def}.

\vspace{2mm}
Now let $\med(\Fc)$ be the defect median, and define the event $$D:=\{\Fc \le \med(\Fc)\}\setminus B.$$
Recall the assumption \eqref{eq:eps>=I^2/3/X^1/3} (where we have the freedom to choose the constant $c_{0}>0$), 
so that \eqref{eq:prob(B)<=e^-eps^{3}} reads 
$\prob(B)\le \frac{1}{4} ,$ provided that $c_{0}$ is sufficiently large. Therefore, $$\prob(D)\ge \prob\left(\{\Fc \le \med(\Fc)\}\right) - \prob(B) \ge \frac{1}{2}-\frac{1}{4}=\frac{1}{4},$$ meaning that
the underlying assumptions of the inequality \eqref{eq:isop ineq upper perturb}, which we aim to apply with $A:=D$, are satisfied. An application of \eqref{eq:isop ineq upper perturb} then yields the inequality 
\begin{equation}
\label{eq:Ds prob large}
\prob(D^{+s}) \ge 1-e^{-s^{2}/4}.
\end{equation}

Let $s$ be given by \eqref{eq:s perturb def}, and assume that $f\in D$, and for some $h\in \Hc$ one has $\|h\|_{\Hc}<s$, then, on reusing \eqref{eq:vol h large norm} with the given $s$, the defect of $f+h$ on $\Xc$ is
\begin{equation*}
\begin{split}
\Fc({f+h})&=\frac{1}{|\Xc|}\int\limits_{\Xc}H(f(x)+h(x))dx \\&\le \Fc + \frac{\vol\left(\left\{x\in \Xc:\: |f(x)|<\frac{\varepsilon}{4} \right\}\right)}{|\Xc|} + 
\frac{\vol\left(\left\{x\in \Xc:\: |h(x)|>\frac{\varepsilon}{4} \right\}\right)}{|\Xc|}\\&\le \med(\Fc) + 2\varepsilon + \varepsilon = \med(\Fc)+3\varepsilon .
\end{split}
\end{equation*}
Hence, in this case, $$f+h\notin E:=\{\Fc > \med(\Fc)+3\varepsilon\}.$$ 
That is, $D^{+s}\cap E=\varnothing$, i.e. $D^{+s}$ does not intersect $E$. 
Thus, 
\begin{equation}
\label{eq:def>med+3eps}
\prob\left(  \{\Fc > \med(\Fc)+3\varepsilon\}\right) = \prob(E) \le 1-\prob(D^{+s}) \le e^{-s^{2}/4} < e^{-\frac{1}{128}\varepsilon^{3}|\Xc|/I^{2}},
\end{equation}
by \eqref{eq:Ds prob large} and \eqref{eq:s perturb def} (cf.\ \eqref{eq:prob(B)<=e^-eps^{3}}). Since \eqref{eq:def>med+3eps} holds true for all $\varepsilon>c_{0} \cdot \frac{I^{2/3}}{|\Xc|^{1/3}} $ in accordance to \eqref{eq:eps>=I^2/3/X^1/3}, 
choosing $c_{0}$ sufficiently large and integrating w.r.t. $\varepsilon$ shows that 
\begin{equation}
\label{eq:F-med(F) small}
\E[\Fc] \le \med(\Fc) + 4\varepsilon.
\end{equation}
At last, we consolidate the estimates \eqref{eq:def>med+3eps} and \eqref{eq:F-med(F) small}, to obtain
\begin{equation*}
\prob(\Fc\ge 7\varepsilon) \le \prob\left(  \{\Fc > \med(\Fc)+3\varepsilon\}\right) <e^{-\frac{1}{128}\varepsilon^{3}|\Xc|/I^{2}} ,
\end{equation*}
and the statement of Proposition \ref{prop:conc upper bnd gen} finally follows upon replacing $\varepsilon$ by $\frac{\varepsilon}{7}$.
\end{proof}

\subsection{Application of the abstract upper bound to random waves}
\label{sec:appl gen bnd upper conc}

Let $f_{T,\eta}:\Mcc\rightarrow\R$ be the random waves \eqref{eq:fTeta band-lim def}, $K_{T,\eta}:\Mcc\times\Mcc\rightarrow\R$ the reproducing (covariance) kernel \eqref{eq:K covar def expr} 
of $f_{T,\eta}$ and $\Hc=\Hc(T,\eta)$ the corresponding reproducing kernel Hilbert space. One has that
\begin{equation*}
\left\|f_{T,\eta}\right\|_{\Hc} = \|{\bf a}\|_{2},
\end{equation*}
where ${\bf a}\in \R^{N(T,\eta)}$ is the vector ${\bf a} = (a_{j})_{\lambda_{j}\in [T-\eta,T]}$ with the $a_{j}$ as in \eqref{eq:fTeta band-lim def}, $N(T,\eta)$ is the number \eqref{eq:N(T,eta) def} of energy levels in $[T-\eta,T]$, and 
$\|{\bf a}\|_{2}$ is the Euclidean norm of ${\bf a}$. 

\begin{definition}[Restriction of $\Hc$ and the operator norm]
Let $B\subseteq \Mcc$ be a closed subdomain of $\Mcc$.

\begin{enumerate}[i.]
\item Denote the restriction of $\Hc$ to $B$, i.e. an element of $\Hc_{B}$ is a restriction
$h|_{B}$ of some $h\in\Hc$, and, by definition, $$\left\|h|_{B}\right\|_{\Hc_{B}}:= \|h\|_{\Hc}.$$

\item Denote $I_{B}=I_{B}(T,\eta)$ to be the operator norm \eqref{eq:I def norm inclusion} of the natural inclusion $i_{B}:\Hc_{B}\rightarrow L^{2}(B)$.

\end{enumerate}
\end{definition}

Note the slight ambiguity of the norm $\|p\|_{\Hc_{B}}$ for $p\in \Hc_{B}$, in case $p=h|_{B}=\widetilde{h}|_{B}$ for some distinct elements $h,\widetilde{h}\in\Hc$ of the ambient RKHS. However, it is easy to see that this problem cannot occur in our settings, as a (finite) linear combination of Laplace eigenfunctions on $\Mcc$ vanishing on some domain $B$ forces all the coefficients to vanish, a by-product of the Aronszajn unique continuation principle, hence we will ignore this possibility. 

\vspace{2mm}

We will need to determine the dependence of $I_{B}(T,\eta)$ on the parameters. The following result is contained, in essence, in ~\cite{sogge16,ht20}, which we explicate for the purpose of giving it a form suited to the subsequent applications.

\begin{proposition}[Cf. {~\cite[Lemma 4.2]{ht20}}]
\label{prop:IB dep B}
There exists a constant $c=c(\Mcc)$, depending only on $\Mcc$, such that, for every $T\ge 1$, $\eta\in [1,T]$, $r \ge \frac{1}{T}$, and a geodesic ball $B=B_{r}(x)$, one has
\begin{equation}
\label{eq:IB depend}
I_{B}^{2} \le \zeta(T,\eta;r):=  c\cdot\frac{\min\{1, r \eta\} } {N(T,\eta)} .
 \end{equation}

\end{proposition}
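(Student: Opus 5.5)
The plan is to identify $I_B^2$ as the top eigenvalue of a spectral-cluster concentration operator and bound it by a $TT^*$ argument. Since $\Hc$ is spanned by $\{\varphi_j\}_{\lambda_j\in[T-\eta,T]}$ with $f_{T,\eta}=\sqrt{|\Mcc|/N}\sum a_j\varphi_j$ and $\|f\|_{\Hc}=\|{\bf a}\|_2$, a general element is $h=\sqrt{|\Mcc|/N}\sum a_j\varphi_j$. Hence
\[ I_B^2=\frac{|\Mcc|}{N(T,\eta)}\sup_{\|{\bf a}\|_2=1}\int_B\Big|\sum_{\lambda_j\in[T-\eta,T]} a_j\varphi_j\Big|^2 , \]
that is, $\frac{|\Mcc|}{N}\|\chi_B\Pi\|^2_{L^2\to L^2}$, with $\Pi$ the spectral projector onto $[T-\eta,T]$. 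It therefore suffices to show $\|\Pi\chi_B\Pi\|_{L^2\to L^2}\ll \min\{1,r\eta\}$ for $r\ge 1/T$.

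The bound $\le 1$ is trivial, since $\|\chi_B\Pi\|\le 1$. So the content is the case $r\eta\le 1$, where we need $\|\chi_B\Pi\|^2\ll r\eta$. By $TT^*$, $\|\chi_B\Pi\|^2=\|\chi_B\Pi\chi_B\|$, which is the norm of the integral operator on $L^2(B)$ with kernel $\frac{N}{|\Mcc|}K_{T,\eta}(x,y)$, $x,y\in B$. I would bound this operator norm by Schur's test, so it suffices to show
\[ \sup_{x\in B}\int_B\sum_{\lambda_j\in[T-\eta,T]}|\varphi_j(x)\varphi_j(y)|\,dy\ll r\eta . \]
The absolute value sits outside the sum, so in practice I would use the kernel directly rather than a sum of absolute values.

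To estimate $\int_B |\Pi(x,y)|\,dy$, I would split according to $\rho=d(x,y)$. For $\rho\le 1/T$ use $|\Pi(x,y)|\le \Pi(x,x)\asymp \eta T^{d-1}$, which is the local Weyl law behind Proposition \ref{prop:asymp covar waves}(i.) and holds even for $\eta\ge1$ bounded, via Hörmander's $O(T^{d-1})$ remainder. This contributes $\eta T^{d-1}T^{-d}=\eta/T\le r\eta$. For $1/T\le\rho\le r\le 1/\eta$ the cluster is effectively monochromatic, and I need $|\Pi(x,y)|\ll \eta T^{d-1}(\rho T)^{-(d-1)/2}$. This is the dispersive bound for the band-limited spectral projector on scales below $1/\eta$. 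Integrating it gives $\eta T^{(d-1)/2}\int_0^r\rho^{(d-1)/2}\,d\rho\asymp \eta T^{(d-1)/2}r^{(d+1)/2}$. That is too large when $rT\gg1$, so Schur with absolute values loses: the oscillation of $\cos(\rho T+\gamma_d)$ has to be used.

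The better route is the $L^2$ local bound of Sogge (\cite{sogge16}, as used in \cite[Lemma 4.2]{ht20}): $\int_{B_r(x)}|u|^2\ll r\|u\|_2^2$ for $u$ in the range of the unit-width spectral projector $\chi_{[\lambda,\lambda+1]}$, valid for $r\ge1/\lambda$. This follows from the parametrix $e^{it\sqrt{-\Delta}}$ for $|t|\le \inj(\Mcc)$, a smoothed projector $\rho(\sqrt{-\Delta}-\lambda)$, and the fact that restriction to a ball of radius $r$ costs a factor $r$ (an $L^2$ trace bound from $T^*T$ with kernel $\hat\rho$ localized). Taking this as the key input, I would cover $[T-\eta,T]$ by $O(\eta)$ unit windows. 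For $\|{\bf a}\|_2=1$ write $u=\sum_k u_k$ with orthogonal pieces. Cauchy–Schwarz gives $\int_B|u|^2\le \eta\sum_k\int_B|u_k|^2\ll \eta r\sum_k\|u_k\|^2=\eta r$. Combined with the trivial bound this yields $\|\chi_B\Pi\|^2\ll\min\{1,r\eta\}$, and multiplying by $|\Mcc|/N$ gives \eqref{eq:IB depend}.

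The main obstacle is the unit-window estimate $\|\chi_{B_r}\chi_{[\lambda,\lambda+1]}\|^2\ll r$ uniformly for $r\ge1/\lambda$ on a general smooth manifold. Proving it requires the short-time wave parametrix and stationary phase, with uniformity in $x$. I would cite it from \cite{sogge16,ht20} rather than rederive it.
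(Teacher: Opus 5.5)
Your proposal is correct and follows the paper's proof: both write $I_B^2=\frac{|\Mcc|}{N(T,\eta)}\sup_{\|{\bf a}\|_2=1}\|\sum a_j\varphi_j\|_{L^2(B)}^2$. Both then bound the latter by $\min\{1,c\, r\eta\}$, using the trivial bound $1$ together with Sogge's unit-window local $L^2$ estimate (as used in \cite[Lemma 4.2]{ht20}) and Cauchy--Schwarz over the $O(\eta)$ orthogonal unit-window pieces. The Schur-test detour you discard is unnecessary, and low windows with $r\lambda<1$ cause no trouble, since there the sup-norm bound gives $r^d\lambda^{d-1}\le r$.
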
 

\begin{proof}
The operator norm $I_{B}$ is given explicitly by
\begin{equation}
\label{eq:IB expl}
I_{B}^{2} = \sup\limits_{\|{\bf a}\|_{2}=1} \left\| \sqrt{\frac{|\Mcc|}{N(T,\eta)}}\sum\limits_{\lambda_{j}\in [T-\eta,T]}a_{j}\varphi_{j}  \right\|^{2}_{L^{2}(B)}.
\end{equation}
On the other hand, a straightforward application of \cite[Lemma 4.2]{ht20} (appealing to ~\cite[Eq. (4.1)]{sogge16} and a Cauchy-Schwarz argument) on $$u=\sum\limits_{\lambda_{j}\in [T-\eta,T]}a_{j}\varphi_{j},$$
of norm $\|u\|_{L^{2}(\Mcc)} = \|{\bf a}\|_{2}$, reads 
\begin{equation*}
\|u\|_{L^{2}(B)}^{2} \le \upsilon(T,\eta;r) \cdot \|u\|_{L^{2}(\Mcc)}^{2} =  \upsilon(T,\eta;r) \cdot \|{\bf a}\|_{2}^{2} ,
\end{equation*}
with $$\upsilon(T,\eta;r) := \begin{cases} c\cdot r\cdot\eta &\frac{1}{T}\le r\le \frac{1}{\eta} ,\\
1 &\frac{1}{\eta}<r < \inj(\Mcc).
\end{cases}$$
Hence,
\begin{equation*}
\left\|\sqrt{\frac{|\Mcc|}{N(T,\eta)}}u   \right\|_{L^{2}(B)}^{2} = \frac{|\Mcc|}{N(T,\eta)} \left\| u\right\|_{L^{2}(B)}^{2} \le \frac{|\Mcc|}{N(T,\eta)} \upsilon(T,\eta;r)\cdot \|{\bf a}\|_{2},
\end{equation*}
which, in light of \eqref{eq:IB expl}, is \eqref{eq:IB depend}, with 
\begin{equation*}
\zeta(T,\eta;r) = \frac{|\Mcc|}{N(T,\eta)} \cdot\upsilon(T,\eta;r). \qedhere
\end{equation*}
\end{proof}

We are now in a position to give a proof for Theorem \ref{t:cub}:

\begin{proof}[Proof of Theorem \ref{t:cub}]
We first address case (a) of Theorem \ref{t:cub}, and assume that $u\in\R$ is fixed. By the symmetry of $f_{T,\eta}(\cdot)$ with respect to negation, one has that
\begin{equation*}
\prob( \Dc_{T,u}(x;r) <  -\eps) = \prob( \Dc_{T,-u}(x;r) > \eps).
\end{equation*}
Therefore, it is sufficient to prove that $\prob( \Dc_{T,u}(x;r) >  \eps)$ is bounded by the r.h.s.\ of \eqref{eq:conc upper bnd}, and decrease the constant $c>0$ to absorb the incurred multiplicative factor of $2$.

\vspace{2mm}

We aim to apply Proposition \ref{prop:conc upper bnd gen} to (a rescaled version of) $f_{T,\eta}$ restricted to the ball $B_x(r)$. We may invoke Proposition \ref{prop:asymp covar waves}(i.) to infer that $$K_{T,\eta}(x,x)>\frac{1}{2}$$ for all $x\in\Mcc$, provided that $\eta>C$ with $C>0$ sufficiently large, and $T$ is sufficiently large. Then the random field 
\begin{equation}
\label{eq:g=rescaled f}
g : B_x(r) \to \R , \qquad g(x)=g_{u;T,\eta}(x):=  \sqrt{2}\cdot (f_{T,\eta}(x) - u)
\end{equation}
satisfies the assumption postulated by Proposition \ref{prop:conc upper bnd gen}, 
i.e.\ that $\var(g(x))\ge 1$. Hence, Proposition \ref{prop:conc upper bnd gen} yields the bound 
\begin{equation}
\label{eq:prob Def>eps<exp}
\prob( \Dc_{T,u}(x;r) >  \eps) < e^{- c_1\delta |B|/I_{B}^{2}},
\end{equation}
where  $c_1 > 0$ is an absolute constant (incurred when passing from $f$ to $g$), $I_{B}$ is the operator norm of the inclusion $i:\Hc_{B}\rightarrow L^{2}(B)$ as in \eqref{eq:I def norm inclusion}, and $\delta=\delta(\eps)$ is as prescribed by Proposition~\ref{prop:conc upper bnd gen}.

We now evaluate the exponent $|B|/I_{B}^{2}$ that appears on the r.h.s.\ of \eqref{eq:prob Def>eps<exp}. First, by Weyl's law \eqref{eq:Weyl quant}, we have that $$N(T,\eta)\asymp \eta\cdot T^{d-1},$$ with the involved constants depending only on $\Mcc$. Further, for $x\in\Mcc$, $0<r<\inj(\Mcc)$, one has $$|B_{r}(x)| \ge c_{1}r^{d},$$ for some $c_{1} > 0$ only depending on $\Mcc$. Hence, on taking into account Proposition \ref{prop:IB dep B},
\begin{equation*}
\frac{|B|}{I^{2}} \ge \frac{r^{d}}{\zeta(T,\eta;r)} \ge c_{2}\cdot \begin{cases}
(rT)^{d-1} &\frac{1}{T}\le r\le \frac{1}{\eta}\\ r^{d}\eta T^{d-1} , &  \frac{1}{\eta}<r<\inj(\Mcc) .
\end{cases}
\end{equation*}
for some $c_{2}>0$ only depending on $\Mcc$. 

\vspace{2mm}

This concludes the proof in the general case (a) of Theorem \ref{t:cub} (arbitrary manifold, $\eta \ge C$), with $c= c_1 \cdot \delta\cdot c_{2}$. Case (b) (round sphere, with arbitrary $\eta\ge 1$) follows along the same lines, this time invoking Proposition \ref{prop:asymp covar spher}(i.) in place of Proposition \ref{prop:asymp covar waves}(i.), except that, while passing from $f_{T,\eta}$ to $g$ as in \eqref{eq:g=rescaled f}, 
there is no need to scale it up by $\sqrt{2}$ to increase the variance to $\ge 1$. 
\end{proof}

\medskip

\section{Lower bound for volume-bias concentration: Proof of theorems \ref{t:clb}-\ref{t:clb2}}
\label{sec:conc bnd proof}

In this section we prove the lower bounds for the volume-bias concentration in theorems \ref{t:clb} and \ref{t:clb2}. The proofs of both results involve the construction of a certain event on which $f_{T}(\cdot)=f_{T,\eta}(\cdot)$ is not sign-balanced or volume-balanced on the geodesic ball $B_x(r) \subseteq \Mcc$. In turn, this involves constructing a deterministic `barrier' function, belonging to the support of the probability measure corresponding to $f_{T}(\cdot)$ that has a large defect/volume-bias on $B_x(r)$, akin to the Nazarov-Sodin's {\em barrier method} ~\cite{NaSoAJM}, introduced for the purpose of the study of the nodal domain count. Constraining $f_{T}(\cdot)$ to be `close' to the barrier provides us with the desired event.

\smallskip
The proofs of theorems \ref{t:clb} and \ref{t:clb2} make use of distinct barriers. For the former, tailored to the volume-bias at non-zero levels, the barrier is relatively simple: we take it to be a (suitably rescaled) reproducing kernel $K_T(x,\cdot)$ of $f_T$ centred at the given point $x \in \Mcc$. On the other hand, for the latter, tailored to the defect, the construction involves a superposition of \textit{three} reproducing kernels associated to \textit{modifications} of $f_T$ with restricted energy levels; the details of this take up the following two subsections. We conclude the proofs of theorems \ref{t:clb}-\ref{t:clb2} in \S~\ref{sec:lower defect-concentration proof}.

\subsection{Existence of Euclidean sign-barrier}
\label{sec:Euclid barrier}
We now begin our construction of a suitable sign-barrier function to be used within the proof of Theorem \ref{t:clb2}, inspired by a the construction of a sign-imbalanced {\em toral} eigenfunction ~\cite[\S 5]{kwy21}.  At a first stage we will exhibit a {\em Euclidean} sign-barrier, a {\em periodic} (rather, invariant) function on $\R^{d}$ that has a large defect.

\vspace{2mm}
For $d\ge 2$ we will find a sign-barrier function $w:\R^{d}\rightarrow \R$ within the $1$-parameter family
\begin{equation}
\label{eq:wt barrier def}
w_{t}(x)=w_{t}^{d}(x) := w_{0}(x)+t\cdot p(x),  \qquad t \ge 0 
\end{equation}
where $w_{0}$ and $p$ are defined as follows. Let $v_{1}:=1$, $v_{2}:=\omega=e^{2\pi i/3}$, and $v_{3}:=\omega^{2}$ be the three roots of unity in $\C\cong \R^{2}$. We view these as vectors in $\R^{2}$, and then embed them in $\R^{d}$ by adding zero components: $v_{1}=(1,0,\ldots, 0) \in \R^{d}$, and $v_{2,3} = (-\frac{1}{2},\pm\frac{\sqrt{3}}{2},0,\ldots, 0 )\in\R^{d}$. We set
\begin{equation}
\label{eq:w0 def}
w_{0}(x):=-\cos\left(2\pi \left\langle x,v_{1}\right\rangle\right)=-\cos(2\pi x_{1}),
\end{equation}
and 
\begin{equation}
\label{eq:p def}
p(x):=-\cos\left(2\pi \left\langle x,v_{2}\right\rangle\right)-\cos\left(2\pi \left\langle x,v_{3}\right\rangle\right).
\end{equation}
The negative sign of $w_{0}$ and $p$ was added so that, as we will show, the defect of $w_{t}$ will be {\em positive}.

\vspace{2mm}
We recall that, given a lattice $\Lambda$ in $\R^{d}$, its dual lattice is defined as $$\Lambda^{*}:=\left\{y\in \R^{d}:\: \forall x\in\Lambda.\: \langle x,\, y\rangle \in \Z\right\}.$$ The following lemma, whose proof is omitted, 
collects some well-known facts on the hexagonal `honeycomb' lattice $\Lambda\subseteq\C\cong\R^{2}$ generated by the two complex numbers $v_{1}=1,v_{2}=\omega$.

\begin{lemma}
\label{lem:Lambda lattice}
Let $\Lambda$ be the lattice $$\Lambda = \left\langle 1, \omega \right\rangle \subseteq\C$$ generated by the two complex numbers $1,\omega$.

\begin{enumerate}[i.]

\item The lattice $\Lambda$ gives rise to a tiling of $\R^{2}$ by equilateral triangles. It acts by translations on $\R^{2}$, and its action has a fundamental domains that is a regular hexagon of area $\frac{\sqrt{3}}{2}$, that tessellates~$\R^{2}$. 

\item The dual lattice to $\Lambda$ is the hexagonal lattice $\Lambda^{*}= \langle 2, 1-\frac{1}{\sqrt{3}}i\rangle$, and one has $2\Lambda\subsetneq \Lambda^{*}$.

\item For every $t>0$, the function $w_{t}$ is invariant w.r.t.\ the action of $\Lambda^{*}$ on $\C$ by translations. 

\item The group action of $\Lambda^{*}$ has a fundamental domain $\Pi$ that is a regular hexagon of side length $\frac{2}{3}$ and area $\frac{2}{\sqrt{3}}$, illustrated in Figure \ref{f}.

\end{enumerate}

\end{lemma}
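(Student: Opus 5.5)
The plan is to verify each item by explicit linear algebra in the plane $\R^{2}\cong\C$, writing $1=(1,0)$ and $\omega=(-\tfrac12,\tfrac{\sqrt3}{2})$. For $d>2$ the statements are understood with $\Lambda^{*}$ acting on the first two coordinates, i.e.\ through $\Lambda^{*}\times\{0\}\subseteq\R^{d}$. This is harmless because $w_{t}$ depends only on $(x_{1},x_{2})$.

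For (i), the covolume of $\Lambda$ is $|\det(1,\omega)|=\tfrac{\sqrt3}{2}$. Since $\omega^{2}=-1-\omega\in\Lambda$, the lines through lattice points in the three directions $1,\omega,\omega^{2}$ cut $\R^{2}$ into equilateral triangles of side $1$ and area $\tfrac{\sqrt3}{4}$, with two triangles per fundamental parallelogram. The Voronoi cell of $\Lambda$ at $0$ is a fundamental domain that tessellates the plane. It is cut out by the perpendicular bisectors towards the six nearest neighbours $\pm1,\pm\omega,\pm\omega^{2}$, hence it is a regular hexagon, and its area equals the covolume $\tfrac{\sqrt3}{2}$.

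For (ii), I will solve for $y=(a,b)$ with $\langle y,1\rangle=a\in\Z$ and $\langle y,\omega\rangle=-\tfrac a2+\tfrac{\sqrt3}{2}b\in\Z$. Both proposed generators pass this test: $\langle 2,1\rangle=2$, $\langle 2,\omega\rangle=-1$, and $\langle 1-\tfrac{i}{\sqrt3},1\rangle=1$, $\langle 1-\tfrac{i}{\sqrt3},\omega\rangle=-1$. The determinant of these two generators is $\tfrac{2}{\sqrt3}$, which equals $1/\mathrm{covol}(\Lambda)=\mathrm{covol}(\Lambda^{*})$, so they generate all of $\Lambda^{*}$. Next, $2\in\Lambda^{*}$, and $2\omega=(-1,\sqrt3)$ satisfies $\langle2\omega,1\rangle=-1$ and $\langle 2\omega,\omega\rangle=2$, so $2\Lambda\subseteq\Lambda^{*}$. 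The inclusion is strict since $\mathrm{covol}(2\Lambda)=2\sqrt3\neq\tfrac{2}{\sqrt3}$. Alternatively, $1-\tfrac{i}{\sqrt3}\notin 2\Lambda$ because $2\Lambda$ has no nonzero vector of length less than $2$. Hexagonality follows because the three shortest vectors $\pm(1-\tfrac{i}{\sqrt3})$, $\pm\tfrac{2i}{\sqrt3}$, $\pm(1+\tfrac{i}{\sqrt3})$ all have length $\tfrac{2}{\sqrt3}$ and are at $60^{\circ}$ to each other; indeed $\Lambda^{*}=\tfrac{2}{\sqrt3}e^{-i\pi/6}\Lambda$.

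For (iii), each summand $-\cos(2\pi\langle x,v_{k}\rangle)$ is invariant under translation by any $y$ with $\langle y,v_{k}\rangle\in\Z$. Here $v_{1}=1$, $v_{2}=\omega$ and $v_{3}=\omega^{2}=-1-\omega$ all lie in $\Lambda$, so by definition of the dual lattice every $y\in\Lambda^{*}$ qualifies, for all three summands and every $t$. For (iv), I take $\Pi$ to be the Voronoi cell of $\Lambda^{*}$. By the similarity noted in (ii) it is a regular hexagon with inradius equal to half the minimal length, namely $\tfrac{1}{\sqrt3}$. Its side length is therefore $\tfrac{2}{\sqrt3}\cdot\tfrac1{\sqrt3}=\tfrac23$, and its area is $\tfrac{3\sqrt3}{2}\cdot\tfrac49=\tfrac{2}{\sqrt3}$, consistent with the covolume.

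Everything here is elementary. The only points needing care are confirming that the two listed vectors generate all of $\Lambda^{*}$, which the determinant comparison settles, and fixing the shortest vectors of $\Lambda^{*}$ correctly so that the side length $\tfrac23$ comes out right.
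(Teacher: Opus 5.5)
Your verification is correct in every item. The paper omits the proof of this lemma as a collection of well-known facts, and your check is exactly the routine argument it leaves to the reader: the dual-basis test on the two generators plus the covolume comparison, the similarity $\Lambda^{*}=\tfrac{2}{\sqrt3}e^{-i\pi/6}\Lambda$, invariance of $w_t$ from $v_1,v_2,v_3\in\Lambda$, and the inradius, side and area of the Voronoi cell. One point is worth recording. Your $\Pi$, the Voronoi cell of $\Lambda^{*}$, has horizontal edges on $x_2=\pm\tfrac{1}{\sqrt3}$ and vertices $(\pm\tfrac23,0)$, $(\pm\tfrac13,\pm\tfrac{1}{\sqrt3})$. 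This is the orientation the proof of Proposition~\ref{prop:Euclid sign-bar} relies on when it intersects $\Pi$ with the strip $|x_1|<\tfrac14$ and obtains a rectangle.
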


\begin{figure}[htp]

\centering
\includegraphics[width=.33\textwidth]{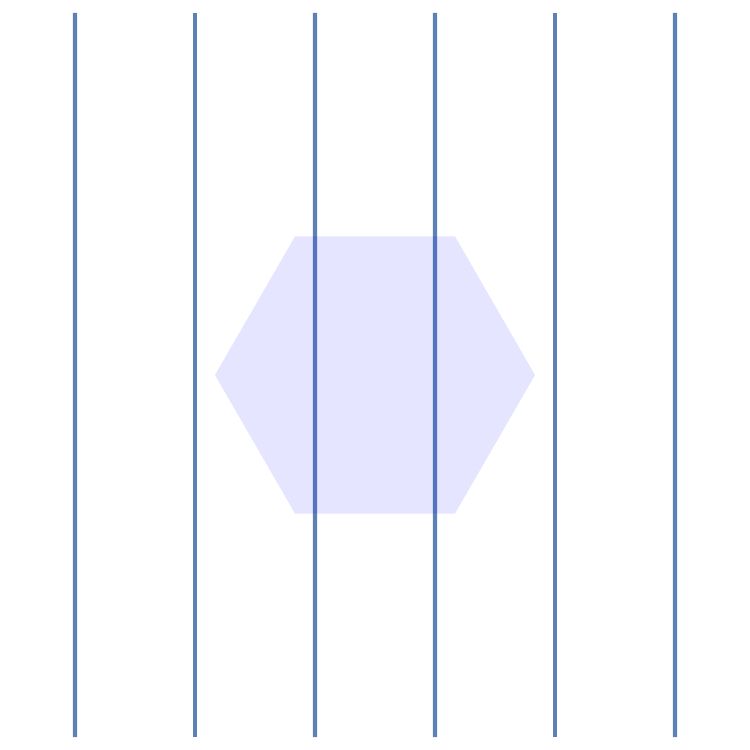}\hfill
\includegraphics[width=.33\textwidth]{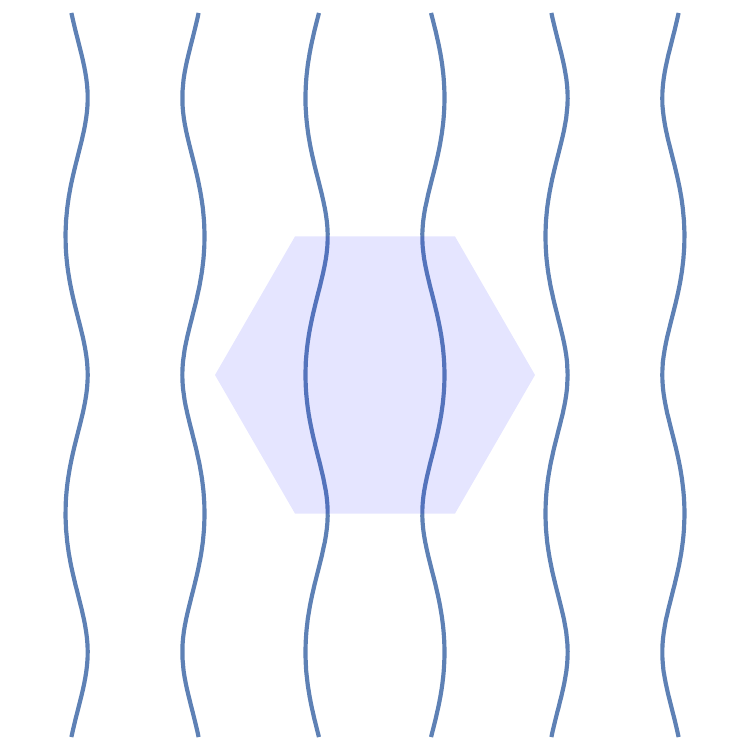}\hfill
\includegraphics[width=.33\textwidth]{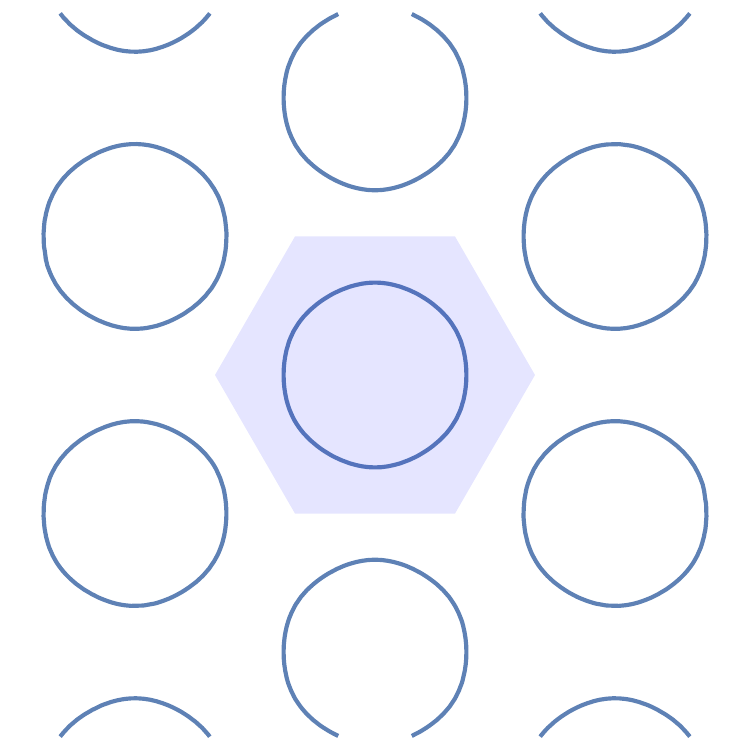}

\caption{The nodal set of $w_t$ in $d=2$, for $t = 0$ (left), $t=0.2$ (centre), $t=1$ (right). The fundamental domain is shaded. Proposition \ref{prop:Euclid sign-bar} asserts that $w_t$ 
has a positive defect on its fundamental domain for sufficiently small $t > 0$.}
\label{f}
\end{figure}

\begin{proposition}[Euclidean sign-barrier]
\label{prop:Euclid sign-bar}
Let $d\ge 2$, and for $t>0$ let $w_{t}:\R^{d}\rightarrow \R$ be given by \eqref{eq:wt barrier def}. There exist absolute constants $\varepsilon=\varepsilon_{0}>0$, $t=t_{0}\in (0,1)$, and $r_{0}=r_{0}(d)$ with the following property: for every $r>r_{0}$, one has
\begin{equation*}
\widetilde{\Dc}_{w_t,\varepsilon}(0;r) > \varepsilon,
\end{equation*}
recalling that $\widetilde{\Dc}_{f,u}(x;r)$ is the uncentred volume-bias defined in \eqref{eq:uncentr vol bias def}.
\end{proposition}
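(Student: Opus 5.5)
The proof reduces the statement about large balls to a statement about a single fundamental domain, and then handles that by a perturbation computation in $t$.

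\textbf{Reduction to a fundamental domain.} The function $w_t$ depends only on $(x_1,x_2)$ and is $\Lambda^*$-periodic in those variables (Lemma \ref{lem:Lambda lattice}(iii.)). For $r\to\infty$ the ball $B_r(0)\subseteq\R^d$ is, up to a boundary layer of relative volume $O(1/r)$, a union of cells $\Pi\times[\text{interval}]$ in the extra coordinates. Hence
\[
\widetilde{\Dc}_{w_t,\varepsilon}(0;r) = \frac{1}{|\Pi|}\int_{\Pi} H(w_t(y)-\varepsilon)\,dy + O(1/r).
\]
Thus it suffices to find $t_0,\varepsilon_0$ such that the planar average $A(t,\varepsilon):=\frac{1}{|\Pi|}\int_\Pi H(w_t-\varepsilon)$ satisfies $A(t_0,\varepsilon_0)>2\varepsilon_0$. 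Then $r_0(d)$ is chosen so that the boundary error is below $\varepsilon_0$.

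\textbf{Perturbative computation.} At $t=0$ the function $w_0=-\cos(2\pi x_1)$ is exactly balanced, so $A(0,0)=0$. Its nodal set consists of the lines $x_1\in \tfrac14+\tfrac12\Z$, where $|\nabla w_0|=2\pi$ is bounded away from zero. For small $t$ the sign of $w_t$ differs from that of $w_0$ only in a strip of width $O(t)$ around these lines. A first-order coarea expansion gives
\[
A(t,0)=\frac{2t}{|\Pi|}\int_{\{w_0=0\}\cap\Pi}\frac{p}{|\nabla w_0|}\,d\sigma + O(t^2).
\]
On the nodal lines, $\cos(2\pi x_1)=0$ with $x_1=\pm\tfrac14$, so $\langle x,v_{2,3}\rangle=-x_1/2\pm\tfrac{\sqrt3}{2}x_2$. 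Then
\[
p=-2\cos(\pi x_1)\cos(\sqrt3\pi x_2)=-\sqrt2\cos(\sqrt3\pi x_2),
\]
which integrates to zero over a full period in $x_2$. The first-order term therefore vanishes (reflecting the symmetry $x_2$-translation), and the leading term is second order.

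\textbf{Second-order term (main obstacle).} The core of the proof is computing the $t^2$ coefficient
\[
A(t,0)= t^2\kappa+O(t^3),
\]
where $\kappa$ collects the second-order correction to the nodal displacement $\delta=tp/|\nabla w_0|+t^2(\dots)$. This correction involves $p^2$, the curvature term from $\partial^2_{x_1}w_0$, and the normal derivative of $p$, all integrated along the nodal lines. I expect $\kappa>0$: the sign conventions (the minus signs in \eqref{eq:w0 def} and \eqref{eq:p def}) are chosen precisely to make the positive set grow, because near $x_1=\tfrac14$ the curvature of $w_0$ pushes the zero set toward the negative region where $p^2$ contributes. Concretely, I would solve $w_t(x_1,x_2)=0$ for $x_1=\tfrac14+\delta(x_2)$ to second order and integrate $\delta$ along $x_2$. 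If the algebra becomes unwieldy, the fallback is to verify $A(t_0,0)>0$ for an explicit $t_0$ such as $0.2$ rigorously by a direct estimate on the hexagon, as suggested by Figure \ref{f}.

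\textbf{Conclusion.} Fix $t_0$ with $A(t_0,0)=a>0$. Since $w_{t_0}$ is a trigonometric polynomial whose critical level sets have measure zero, $\varepsilon\mapsto A(t_0,\varepsilon)$ is continuous at $0$. Choosing $\varepsilon_0<a/3$ small enough gives $A(t_0,\varepsilon_0)>2\varepsilon_0$. Finally take $r_0$ so that the boundary error from the reduction step is less than $\varepsilon_0$; then $\widetilde{\Dc}_{w_{t_0},\varepsilon_0}(0;r)>\varepsilon_0$ for all $r>r_0$, as claimed.
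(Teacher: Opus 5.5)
Your outline follows the paper's proof step by step: reduction to the fundamental domain $\Pi$ by periodicity and independence from $x_3,\dots,x_d$, then $D(0)=0$, then $D'(0)=0$ from $p=-\sqrt2\cos(\sqrt3\pi x_2)$ on the nodal lines, then continuity in $\varepsilon$ and tiling. \textbf{The gap is the one step that carries all the content: you never show that the second-order coefficient is strictly positive.} Writing ``I expect $\kappa>0$'' is not an argument. The symmetry that killed the zeroth- and first-order terms could a priori kill the second as well; the paper notes that a single kernel $K_T(x,\cdot)$ is perfectly sign-balanced, so exact balance is a real possibility here. Your fallback, checking $t_0=0.2$ ``by a direct estimate'', is also left undone.

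The mechanism you propose is also wrong. On the nodal lines $x_1=\pm\frac14$ one has $\partial_{x_1}^2 w_0=4\pi^2\cos(2\pi x_1)=0$, and the lines are straight. So neither the curvature of $w_0$ nor the curvature of the nodal set contributes anything. In the second-variation formula of Lemma \ref{lem:defect derivatives}, only the cross term $-2p\langle\nabla w_0,\nabla p\rangle/\|\nabla w_0\|^3$ survives. On $x_1=\pm\frac14$ we have $\langle\nabla w_0,\nabla p\rangle=2\sqrt2\pi^2\cos(\sqrt3\pi x_2)$, so this integrand equals $\frac1\pi\cos^2(\sqrt3\pi x_2)\ge 0$, and integrating gives $D''(0)=2/\pi$.

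The correct heuristic is that $|p|$ decreases in the direction of $\nabla w_0$, i.e.\ $p\,\partial_N p<0$. Where $tp>0$, the zero set moves into the negative region, where $|p|$ is larger, so it moves further. Where $tp<0$, it moves into the positive region, where $|p|$ is smaller, so it moves less. The net effect is a gain for the positive set.

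If you prefer to avoid the variation lemma, the computation can be done in closed form. Since $(1,1/\sqrt3)$ and $(0,2/\sqrt3)$ form a basis of $\Lambda^*$, the rectangle $[-\frac12,\frac12)\times[0,\frac{2}{\sqrt3})$ is a fundamental domain. Put $u=\cos(\pi x_1)\in[0,1]$ and $s=t\cos(\sqrt3\pi x_2)$. Then $w_t=1-2u^2-2su$, so $w_t\ge 0$ if and only if $u\le u_+(s):=\frac12\bigl(-s+\sqrt{s^2+2}\bigr)$. For fixed $x_2$, the proportion of $x_1$ with $w_t\ge0$ is therefore
\[
1-\frac{2}{\pi}\arccos u_+(s)=\frac12-\frac{\sqrt2}{\pi}\,s+\frac{s^2}{\pi}+O(s^3).
\]
Averaging over $x_2$ gives $D(t)=t^2/\pi+O(t^3)$. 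One of these two computations has to be included; without it the proposal proves nothing beyond $D(0)=D'(0)=0$.
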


Towards the proof of Proposition \ref{prop:Euclid sign-bar} we will require the following general lemma that facilitates ruling out that the defect of a $1$-parameter family of smooth functions vanishes identically on the fundamental domain $\Pi$, so that a sign-barrier will be found inside the said family of functions. Although only stated for $d=2$ (as used below), it is easy to generalise it to arbitrary dimensions.

\begin{lemma}
\label{lem:defect derivatives}
Let $\phi_{0},\psi:\Gamma\rightarrow \R$ be two smooth functions defined on an open domain $\Gamma\subseteq\R^{2}$, and $\Pi\subseteq \Gamma$ a compact domain of area $|\Pi|=\area(\Pi)\in (0,\infty)$, 
so that $\phi_{0}$ has no critical zeros on $\Pi$. For $t\in \R$ define 
 \[ \phi_{t}(x):=\phi_{0}(x)+t\cdot \psi(x) \qquad \text{and} \qquad D(t):= \frac{1}{|\Pi|} \int\limits_{\Pi} H(\phi_{t}(x)) dx. \]
 Then
\[  D'(0) = \frac{2}{|\Pi|}\int\limits_{\phi_{0}^{-1}(0)\cap \Pi} \frac{\psi(x)}{\|\nabla  \phi_{0}(x)\|} dx \]
and
\begin{equation}
\label{eq:D''(0) expr int}
\begin{split}
D''(0) &= \frac{2}{|\Pi|}\int\limits_{\phi_{0}^{-1}(0)\cap \Pi} 
\bigg(-2\frac{\psi(x)\cdot\left\langle \nabla\phi_{0}(x),\nabla\psi(x)\right\rangle}{\|\nabla\phi_{0}(x)\|^{3}}dx + \frac{\psi(x)^{2}}{\|\nabla\phi_{0}(x)\|^{4}}\cdot \partial_{\nabla\phi_{0}(x)}\left[\left\|\nabla\phi_{0}(x)\right\|\right] 
\\& \qquad \qquad \qquad \qquad \qquad \qquad + \frac{\psi(x)^{2}}{\|\nabla  \phi_{0}(x)\|^{2}} \cdot \kappa(x)\bigg)dx,
\end{split}
\end{equation}
where $\partial_v$ denotes the directional derivative and $\kappa(x)$ is the curvature of a point $x \in \phi^{-1}(0)$, signed in accordance to the orientation defined by the unit normal $\vec{N}(x) = \frac{\nabla \phi_{0}(x)}{\|\nabla \phi_{0}(x)\|}$.
\end{lemma}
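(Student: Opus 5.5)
The plan is to write $D(t)$ as an integral of a smooth cutoff of the Heaviside function and differentiate under the integral sign. Equivalently, one can use the coarea formula to turn volume derivatives into integrals over the moving level set. Since $H(s)=2\id_{s\ge 0}-1$, we have
\[ D(t)=\frac{2}{|\Pi|}\,\bigl|\{x\in\Pi:\ \phi_t(x)\ge 0\}\bigr| - 1 . \]
So it suffices to compute the first two $t$-derivatives of the volume $V(t)=|\{\phi_t\ge 0\}\cap\Pi|$ at $t=0$.

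Because $\phi_0$ has no critical zeros on the compact set $\Pi$, the implicit function theorem gives $\delta>0$ with $\|\nabla\phi_t\|\ge c>0$ near $\phi_t^{-1}(0)\cap\Pi$ for $|t|<\delta$. I tacitly assume, as in the intended application, that the zero set meets $\partial\Pi$ transversally or not at all. In the periodic setting the boundary contributions cancel by the $\Lambda^*$-invariance, so I will treat the boundary terms as absent and not dwell on them.

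\textbf{First derivative.} Use the coarea formula:
\[ V(t)=\int_{0}^{\infty}\int_{\{\phi_t=s\}\cap\Pi}\frac{d\sigma}{\|\nabla\phi_t\|}\,ds . \]
Alternatively, write $V(t)=\lim_{\epsilon\to0}\int_\Pi \chi_\epsilon(\phi_t)$ with $\chi_\epsilon$ a smoothed indicator. Differentiating gives
\[ V'(t)=\int_\Pi \chi_\epsilon'(\phi_t)\,\psi\,dx\ \longrightarrow\ \int_{\phi_t^{-1}(0)\cap\Pi}\frac{\psi}{\|\nabla\phi_t\|}\,d\sigma , \]
with the limit justified by the coarea formula and the nondegeneracy of $\nabla\phi_t$. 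Setting $t=0$ and multiplying by $2/|\Pi|$ gives $D'(0)$.

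\textbf{Second derivative.} Define
\[ G(t)=\int_{\phi_t^{-1}(0)\cap\Pi} g_t\,d\sigma , \qquad g_t=\frac{\psi}{\|\nabla\phi_t\|} . \]
Differentiate $G(t)$ at $t=0$ by parametrising $\phi_t^{-1}(0)$ as a normal graph over $\phi_0^{-1}(0)$: $x\mapsto x+n(x,t)\vec N(x)$. From $\phi_0(x+n\vec N)+t\psi(x+n\vec N)=0$ we get
\[ \partial_t n\big|_{t=0}=-\frac{\psi}{\|\nabla\phi_0\|} . \]
The variation of an integral over a moving curve has three pieces:
\begin{enumerate}[(a)]
\item the explicit $t$-derivative of the integrand,
\[ \partial_t g_t\big|_{t=0}=-\frac{\psi\,\langle\nabla\phi_0,\nabla\psi\rangle}{\|\nabla\phi_0\|^3} ; \]
\item the normal derivative of the integrand times the normal velocity, $(\partial_{\vec N}g_0)\cdot\partial_t n$;
\item the change of arc length, which equals the curvature times the normal velocity, $\kappa\, g_0\,\partial_t n$, with the sign fixed by the orientation $\vec N$.
\end{enumerate}
Now expand term (b) using
\[ \partial_{\vec N}g_0=\frac{\partial_{\vec N}\psi}{\|\nabla\phi_0\|}-\frac{\psi\,\partial_{\vec N}\|\nabla\phi_0\|}{\|\nabla\phi_0\|^2} . \]
The first part contributes a second copy of the $\langle\nabla\phi_0,\nabla\psi\rangle$ term, which accounts for the coefficient $-2$. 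The second part gives the $\psi^2\,\partial_{\nabla\phi_0}\|\nabla\phi_0\|/\|\nabla\phi_0\|^4$ term; note that $\partial_{\vec N}=\|\nabla\phi_0\|^{-1}\partial_{\nabla\phi_0}$. Term (c) gives the curvature term $\psi^2\kappa/\|\nabla\phi_0\|^2$. Collecting the three pieces and multiplying by $2/|\Pi|$ yields \eqref{eq:D''(0) expr int}.

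\textbf{Main obstacle.} The main difficulty is bookkeeping the signs, namely the orientation of $\kappa$ relative to $\vec N$ and the sign of $\partial_t n$, together with justifying the first variation formula for the moving curve. I would verify the signs on a model case: $\phi_0(x)=x_1$ and $\psi(x)=x_2^2$. Here the $\kappa$ and gradient-variation terms vanish, and $D''(0)$ can be computed directly.
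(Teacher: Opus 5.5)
Your proposal is correct and is essentially the paper's argument. The paper also reduces to $D'(t)=\frac{2}{|\Pi|}\int_{\phi_t^{-1}(0)\cap\Pi}\psi/\|\nabla\phi_t\|$, obtaining it from the normal displacement $r(x;t)=-t\psi/\|\nabla\phi_0\|+O(t^2)$ of the nodal boundaries rather than from a smoothed Heaviside, and then splits $D''(0)$ into the explicit $t$-derivative of the integrand plus a moving-curve term computed with the arclength factor $1-\kappa r$, which is exactly your (a)+(b)+(c).

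Two small points. With the stated convention ($\kappa>0$ when the curve bends towards $\vec N$), your term (c) is $-\kappa g_0\,\partial_t n=+\kappa g_0^2$, not $\kappa g_0\,\partial_t n$ as written. Your model case $\phi_0=x_1$, $\psi=x_2^2$ cannot detect such sign errors, since all three terms vanish there; the case $\phi_0=R-\|x\|$, $\psi\equiv 1$, with $V(t)=\pi(R+t)^2$, does.

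Your boundary caveat is genuinely needed, and it is more accurate than the paper's claim that a density argument removes intersections with $\partial\Pi$. If the nodal line crosses $\partial\Pi$ obliquely, endpoint terms appear. For example, $\Pi=\{0\le x_1\le 1,\ bx_1\le x_2\le 1\}$, $\phi_0=x_1-\frac12$, $\psi\equiv 1$ gives $D''(0)=2b/|\Pi|$, while the formula gives $0$. So the lemma should be read with the nodal line avoiding $\partial\Pi$, meeting it orthogonally, or in the periodic setting, where one may work on the torus. Each of these covers the application, where the segments $x_1=\pm\frac14$ meet the horizontal edges of the hexagon at right angles.
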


The proof of Lemma \ref{lem:defect derivatives} will be given in Appendix \ref{sec:defect derivatives}. We are now ready to give a proof for Proposition \ref{prop:Euclid sign-bar}.

\begin{proof}[Proof of Proposition \ref{prop:Euclid sign-bar}]

Recall that $\Pi$ is a hexagonal fundamental domain of the group action of $\Lambda^{*}$ on $\R^{2}$, preserving $w_{t}$ for every $t$, in the $d=2$ context (see Lemma \ref{lem:Lambda lattice}). First, we claim that it is sufficient to show that the defect 
\begin{equation*}
D(t) := \frac{1}{|\Pi|}\int\limits_{\Pi} H(\omega_{t}(x)) dx
\end{equation*}
of $\omega_{t}$ on $\Pi\subseteq \R^{2}$ does not vanish identically as a function of $t>0$, numerically verified ~\cite[\S~4]{kwy21} for the choice $t=1$. 

Indeed, suppose this were true. Then since $w_{t}$ is independent of the last $(d-2)$ coordinates, we may assume that $d=2$, and by the continuity of the function 
$$s\mapsto \widetilde{D}_{s}(t) := \frac{1}{|\Pi|} \int\limits_{\Pi} H(\omega_{t}(x) - s) dx,$$ it follows that 
if, for some $t>0$, one has $D(t)>0$, then that would imply that $\widetilde{D}_{\varepsilon}(t)  > \frac{1}{2}D(t) > 0$ with $\varepsilon>0$ sufficiently small. Now choose $r_{0}$ sufficiently large, and tile a large ball of radius $r>r_{0}$ by translates of $\Pi$, save for a small corridor around the boundary. Then, for $w=w_{t_{0}}$, as $r \to \infty$
$$\widetilde{\Dc}_{w,\varepsilon}(0,r) = \widetilde{D}_{\varepsilon}(t_{0}) + o(1),$$ 
so that for $r_{0}$ sufficiently large and $r>r_{0}$, $\widetilde{\Dc}_{w,\varepsilon}(0;r)$ is positive and bounded away from $0$. Thus, further decreasing $\varepsilon$ will complete the proof.

\vspace{2mm}
It remains to prove that $D(t)$ does not vanish identically. For $t=0$, we have $w_{0}(x)=-\cos(2\pi x_{1})$, and by the elementary geometry involved, the nodal set
\begin{equation}
\label{eq:nodal line w0 vectical}
\Pi\cap w_{0}^{-1}(-\infty,0) = \Pi\cap \Big\{x\in \R^{2}:\: x_{1}\in \Big(-\frac{1}{4},\frac{1}{4} \Big) \Big\} =\Big\{ \pm \frac{1}{4}  \Big\}\times \Big[-\frac{1}{\sqrt{3}},\frac{1}{\sqrt{3}}  \Big]
\end{equation}
is a rectangle whose side are of length $\frac{1}{2}$ (horizontal), and $2\cdot \frac{2}{3}\sin(\pi/3)= \frac{2}{\sqrt{3}}$ (vertical), 
of area $$\area\left( \left\{x\in\Pi :\: w_{0}(x)<0\right\}\right) =\frac{1}{\sqrt{3}}=\frac{1}{2}\cdot \area(\Pi).$$ Therefore, 
$$D(0) = \frac{1}{\area(\Pi)}\cdot \left(\area\left( \left\{x\in\Pi :\: w_{0}(x)>0\right\}\right)-\area \left(\left\{x\in\Pi :\: w_{0}(x)<0\right\}\right)\right) = 0.$$

\vspace{2mm}

Next, we evaluate the first two derivatives of $D(t)$ at $t=0$, with the help of Lemma \ref{lem:defect derivatives} with $\phi_{0}(x):=w_{0}(x)$ as in \eqref{eq:w0 def}, and $\psi(x):=p(x)$ as in \eqref{eq:p def} (and $\Pi$ as in Lemma \ref{lem:Lambda lattice}(iv.)). We will see that $D'(0)=0$, but $D''(0)>0$, implying that the assertion of Proposition \ref{prop:Euclid sign-bar} follows for $t_{0}>0$ sufficiently small.

First, we apply Lemma \ref{lem:defect derivatives}(i.) to evaluate $D'(0)$. The nodal line $\phi_{0}^{-1}(0) = w_{0}^{-1}(0)$ consists of two straight vertical segments \eqref{eq:nodal line w0 vectical}.  
Then, on the nodal line, we substitute $x_{1}=\pm \frac{1}{4}$ into \eqref{eq:p def} to yield
\begin{equation}
\label{eq:p(x) nodal line}
p(x) = -\sqrt{2}\cos\big(\sqrt{3}\pi x_{2}\big).
\end{equation}
It is also straightforward to compute $\|\nabla w_{0}(x)\| \equiv 2\pi$ for $x \in w_{0}^{-1}(0)$. Hence, on $x\in w_{0}^{-1}(0)$, 
$$\frac{p(x)}{\|\nabla w_{0}(x)\|} = \frac{1}{\sqrt{2}\pi}\cos\big(\sqrt{3}\pi x_{2}\big), $$ which is a periodic function of $x_{2}$ of period $\frac{2}{\sqrt{3}}$. Since the length of each of the vertical lines of 
\eqref{eq:nodal line w0 vectical} is also $\frac{2}{\sqrt{3}}$, the expression $\frac{p(x)}{\|\nabla w_{0}(x)\|}$ integrates to $0$, confirming that $D'(0)=0$, by Lemma \ref{lem:defect derivatives}(i.).

\vspace{2mm}
Next, we apply Lemma \ref{lem:defect derivatives}(ii.) to evaluate $D''(0)$. First, since the curvature of the vertical lines \eqref{eq:nodal line w0 vectical} vanishes, we may ignore the $3$rd term on the r.h.s.\ of \eqref{eq:D''(0) expr int}. It is straightforward to evaluate the gradient of $w_{0}(x)$ to be $\nabla w_{0}(x) = \left(2\pi \sin(2\pi x), 0 \right)$, hence $$\|\nabla w_{0}(x)\| = 2\pi |\sin(2\pi x)| = \pm 2\pi \sin(2\pi x).$$
Therefore, on the nodal line, $$\partial_{\nabla w_{0}(x)} \left[\|\nabla w_{0}(x)\|\right] = \pm \frac{\partial}{\partial x} \left[\|\nabla w_{0}(x)\|\right] = \pm 4\pi^{2}\cos(2\pi x)\big|_{x=\pm 1/4} = 0,$$
showing that the $2$nd term on the r.h.s. of \eqref{eq:D''(0) expr int} vanishes too. 

It then remains to evaluate the $1$st term on the r.h.s. of \eqref{eq:D''(0) expr int}. On $x\in w_{0}^{-1}(0)$ we have
\begin{equation*}
\begin{split}
\left\langle  \nabla\phi_{0}(x),\nabla\psi(x)   \right\rangle &= 2\pi^{2}\big\langle (1,0),  \big(\sqrt{2}\cos\big(\sqrt{3}\pi x_{2}\big),\sqrt{6}\cos\big(\sqrt{3}\pi x_{2}\big) \big)  \big\rangle
\\&=2\sqrt{2}\pi^{2}\cos\big(\sqrt{3}\pi x_{2}\big),
\end{split}
\end{equation*}
and thus, on recalling \eqref{eq:p(x) nodal line}, here $$\frac{\psi(x)\cdot\left\langle \nabla\phi_{0}(x),\nabla\psi(x)\right\rangle}{\|\nabla\phi_{0}(x)\|^{3}} = -\frac{4\pi^{2} \cos\left(\sqrt{3}\pi x_{2}\right)^{2}}{8\pi^{3}} = 
-\frac{1}{2\pi} \cos\big(\sqrt{3}\pi x_{2}\big)^{2} .$$ Integrating that expression along the nodal line \eqref{eq:nodal line w0 vectical}, and substituting into \eqref{eq:D''(0) expr int} finally yields
\begin{equation*}
D''(0) = \frac{4}{|\Pi|}\cdot \frac{1}{2\pi \cdot 2}\cdot 2\cdot\frac{2}{\sqrt{3}} = \frac{4}{\sqrt{3}\pi |\Pi|} = \frac{2}{\pi} >0 \qedhere
\end{equation*} 
\end{proof}

\subsection{Existence of sign-barrier on $\Mcc$}

Recall that $\Hc=\Hc(T,\eta)$ is the RKHS corresponding to $f_{T,\eta}$ of \eqref{eq:fTeta band-lim def}, as in \S~\ref{sec:appl gen bnd upper conc}. The following proposition asserts the existence of a sign-barrier, with properties analogous to the Euclidean sign-barrier of Proposition \ref{prop:Euclid sign-bar}, around every reference point $x\in\Mcc$. 

\begin{proposition}[Sign-barrier]
\label{prop:sign-bar exist man}
There exists a constant $\eps = \varepsilon_{1}>0$, and constants $C>0$, $T_{0}>0$ sufficiently large, only depending on $\Mcc$, with the following property. Let $T>T_{0}$, $\eta\in [1,T]$, $r>0$ satisfy either (a) $\frac{C}{T}<r<\frac{1}{C} \cdot \frac{1}{T^{\frac{d}{d+1}}}$ and $\eta> C(rT)^{d-1}$, or (b) $\Mcc=\Sc^{d}$, and $\frac{C}{T}<r < \frac{1}{C\sqrt{T}}$. Then, for every $x\in\Mcc$, there exists an element $h=h_{x,r}\in\Hc$ of the RKHS of norm
\begin{equation}
\label{eq:sign bar norm bnd}
\|h\|^2_{\Hc} < C\cdot \max\{ 1,r^2 T \cdot\eta  \}\cdot (rT)^{2(d-1)},
\end{equation}
satisfying
\begin{equation}
\label{eq:sign bar pos bias}
\widetilde{\Dc}_{h;\eps}(x;r) > \varepsilon.
\end{equation}
\end{proposition}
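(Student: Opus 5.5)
The plan is to transplant the Euclidean sign-barrier $w_{t_0}$ of Proposition \ref{prop:Euclid sign-bar} onto $B_r(x)$, building it from three reproducing kernels centred at distant points. Fix geodesic normal coordinates at $x$. Choose a \emph{restricted} sub-band $[S-\eta',S]\subseteq[T-\eta,T]$. The band-width $\eta'$ is taken so that on the ball of radius $r$ the kernel of this sub-band is well approximated by a plane wave, which requires roughly $\eta' r\ll 1$; we use Proposition \ref{prop:asymp covar waves}(ii.), or Proposition \ref{prop:asymp covar spher} on the sphere. Denote the corresponding RKHS by $\Hc'\subseteq\Hc$. Its norm is related to that of $\Hc$ by the normalisation factor $N(T,\eta)/N(S,\eta')$.

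For $j=1,2,3$ pick points $y_j\in\Mcc$ at distance $R$ from $x$ in the directions $-v_j$, with $rT\ll RS$ but $R$ still small enough that the asymptotic expansion of the kernel is valid. Near $x$ the phase $d(y_j,z)\,S$ is then $RS+S\langle z-x,v_j\rangle+O(S r^2/R)$. By \eqref{eq:covar rand wav asymp off}, a suitably rescaled kernel $K'(y_j,\cdot)$ thus equals $c(RS)^{-(d-1)/2}\cos(S\langle z-x,v_j\rangle+\phi_j)$ plus small errors. We tune $R$ within a window of length $O(1/S)$, or tune $S$, to fix the phases $\phi_j$ to be those in \eqref{eq:w0 def}--\eqref{eq:p def}. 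Then set
\[ h:=\alpha\big(-K'(y_1,\cdot)-t_0K'(y_2,\cdot)-t_0K'(y_3,\cdot)\big),\qquad \alpha\asymp (RS)^{(d-1)/2}, \]
with the sign of $\alpha$ chosen so that, after rescaling $z\mapsto S(z-x)/2\pi$, $h$ is uniformly $C^0$-close on $B_r(x)$ to $w_{t_0}$, where $w_{t_0}$ is viewed at scale $rS/2\pi>r_0$.

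Next we transfer the positive volume-bias. The gradient of $w_{t_0}$ is bounded below near its level set at $\eps_0$ (after perturbing $\eps$ slightly), so a $C^0$ error of size $\delta$ changes $\widetilde{\Dc}$ by $O(\delta)$. Proposition \ref{prop:Euclid sign-bar} then gives \eqref{eq:sign bar pos bias} with $\eps=\eps_0/2$. For the norm, $\|K'(y,\cdot)\|_{\Hc'}^2=K'(y,y)\asymp1$, so
\[ \|h\|_{\Hc}^2\ll \alpha^2\,\frac{N(T,\eta)}{N(S,\eta')}\asymp (RS)^{d-1}\,\frac{\eta}{\eta'} . \]
The errors to control are the curvature error $Sr^2/R$ (requiring $R\gg Sr^2$), the amplitude variation of order $r/R$, the error term $\eta' R+\frac{1}{RS}+\frac{(RS)^{(d-1)/2}}{\eta'}$, and the metric distortion at scale $r$. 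On the sphere the last summand is absent.

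Optimising, take $R\asymp Sr^2$, i.e.\ $RS\asymp (rT)^2$, and $\eta'\asymp(RS)^{(d-1)/2}$ times a large constant. Then $\eta' R\ll1$ reads $(rT)^{d-1}r^2T\ll1$, which is exactly the hypothesis $r<T^{-d/(d+1)}/C$. This gives $\|h\|^2\ll (rT)^{2(d-1)}\max\{1,\eta/\eta'\}$, which is consistent with \eqref{eq:sign bar norm bnd} once one takes $\eta'\asymp\min\{\eta,1/R\}$ and checks $\eta/\eta'\le\max\{1,r^2T\eta\}$. The condition $\eta>C(rT)^{d-1}$ guarantees that the sub-band fits. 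On $\Sc^d$ we take $\eta'=1$, and need only $R\asymp Tr^2<1$, i.e.\ $r<1/(C\sqrt T)$.

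The main obstacle is this balancing. The plane-wave approximation of $K'(y_j,\cdot)$ must hold in $C^0$ to small absolute precision $\delta$ uniformly on $B_r(x)$, including the second-order phase term, while $\eta'$ must stay compatible with both $r$ and $R$. Exact phase matching of the three waves is also needed: it requires placing the $y_j$ at geodesic distances differing by $O(1/S)$, and controlling the constant offset $\gamma_d$. I would first try to handle phases by choosing the three distances $R_j$ independently and absorbing the resulting amplitude differences $R_j/R=1+O(1/RS)$ into the error.
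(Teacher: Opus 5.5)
Your construction is the paper's. You use three sub-band kernels $K_{T,\eta'}$ centred at distance $R\asymp Cr^2T$ from $x$ in the directions $v_j$, with $RT+\gamma_d\in 2\pi\Z$ and amplitude $(RT)^{(d-1)/2}$. You take $\eta'=\frac1C\min\{\eta,1/R\}$ and control the norm via $\|g\|_{\Hc}^2=\frac{N(T,\eta)}{N(T,\eta')}\|g\|_{\Hc'}^2$. Case (a) goes through as you describe.

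Two of your stated worries are not real obstacles:
\begin{itemize}
\item All three centres are at the same distance $R$ from $x$, so the single condition $RS+\gamma_d\in2\pi\Z$ fixes all three phases at once. This is exactly the paper's choice of $\widetilde r$.
\item The transfer of the bias needs no gradient bound. The functional $\widetilde{\Dc}_{f,u}$ is non-decreasing in $f$ and non-increasing in $u$, so a uniform error $\delta$ only shifts the level by $\delta$.
\end{itemize}

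The step that fails is case (b), where you take $\eta'=1$ on $\Sc^d$. Then $N(\ell,\eta)/N(\ell,1)\asymp\eta$. Since $\|K_{\ell,1}(y_j,\cdot)\|_{\Hc'}=1$ and the three kernels are nearly orthogonal, your own norm computation gives
\[
\|h\|_{\Hc}^2\asymp (rT)^{2(d-1)}\eta .
\]
This exceeds the required bound $C(rT)^{2(d-1)}\max\{1,r^2T\eta\}$ in \eqref{eq:sign bar norm bnd} by the factor $\min\{\eta,1/(r^2T)\}$. Under $r<1/(C\sqrt T)$ that factor is unbounded; for example, $\eta=T$ and $r=T^{-3/4}$ give $T^{1/2}$. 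So for banded waves on the sphere your $h$ does not satisfy the statement, and the resulting bound in Theorem \ref{t:clb2}(b) would degrade. Only $\eta$ bounded, e.g.\ random spherical harmonics, is unaffected.

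The fix is to keep the sub-band as wide as allowed on the sphere too: take $\eta'=\max\{1,\frac1C\min\{\eta,1/R\}\}$.
\begin{itemize}
\item Proposition \ref{prop:asymp covar spher} has error $O\big(\frac{1}{RS}+\eta' R\big)$, which is $O(1/C)$ when $\eta'\ge 1$ is the unrounded value. When $\eta'=1$ is forced (i.e.\ $\eta<C$), it is $O(R)$, small because $r<1/(C\sqrt T)$.
\item This gives $\eta/\eta'\ll\max\{1,r^2T\eta\}$, and the norm bound follows exactly as in case (a).
\end{itemize}
The sphere does not let you shrink $\eta'$. Its advantage is only the absence of the $(RS)^{(d-1)/2}/\eta'$ error term, which removes the lower constraint on $\eta'$. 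That is why hypothesis (b) needs no condition on $\eta$ and only $R\ll 1$.
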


Towards the proof of Proposition \ref{prop:sign-bar exist man} we will require the following {\em geometric} lemma.

\begin{lemma}
\label{lem:distortions plane waves}

\begin{enumerate}[i.]

\item Let $r,R >0$ be two numbers so that $r<R$, and $\xi\in \R^{d}$ so that $\|\xi\|=R$. Then, uniformly for $y\in B_{r}(0)\subseteq\R^{d}$, one has
\begin{equation}
\label{eq:dist distortion R2}
d_{2}(\xi,y) = R-\frac{1}{R}\langle \xi,y\rangle + O\Big( \frac{r^{2}}{R}  \Big),
\end{equation}
where $d_{2}(\cdot,\cdot)$ is the standard Euclidean distance in $\R^{d}$.

\item Let $x\in\Mcc$, and $r,\widetilde{r}\in (0,\inj(\Mcc))$ two radii below the injectivity radius of $\Mcc$, so that $\widetilde{r}>2r$. Let
$v\in \Sc^{1}\subseteq \R^{d}$ be a unit vector, and $\xi=\exp_{x}(\widetilde{r}\cdot v)$. Then, uniformly on the ball $w\in B_{r}(0)\subseteq T_{x}(\Mcc)$ in the tangent space, one has
\begin{equation}
\label{eq:distortion Riemann}
d(\xi,\exp_{x}(w)) = \widetilde{r}-\langle w,v\rangle + O\Big(\frac{r^{2}}{\widetilde{r}}  \Big).
\end{equation} 

\end{enumerate}

\end{lemma}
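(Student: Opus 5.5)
For part (i.) I would expand the Euclidean norm directly. Write
\[ d_2(\xi,y)^2=\|\xi-y\|^2=R^2-2\langle\xi,y\rangle+\|y\|^2=R^2\Big(1-\frac{2\langle \xi,y\rangle}{R^2}+\frac{\|y\|^2}{R^2}\Big), \]
and set $s:=-\frac{2\langle \xi,y\rangle}{R^2}+\frac{\|y\|^2}{R^2}$. Then $|s|\le 2r/R+r^2/R^2\le 3r/R$. Since $r<R$ the quantity $1+s$ stays nonnegative, and $\|\xi-y\|\ge R-r$. Taking square roots via $\sqrt{1+s}=1+s/2+O(s^2)$ (uniformly for $1+s\ge0$, $|s|\le3$) gives
\[ d_2(\xi,y)=R-\frac{\langle\xi,y\rangle}{R}+\frac{\|y\|^2}{2R}+O\Big(R\cdot\frac{r^2}{R^2}\Big). \]
Both the $\|y\|^2/(2R)$ term and the error are $O(r^2/R)$, which yields \eqref{eq:dist distortion R2}. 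No real difficulty arises here. The only subtle point is the regime $r/R$ close to $1$, where the bound is trivially true because all terms are $O(R)=O(r^2/R)$.

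For part (ii.) I would work in normal coordinates at $x$, where $\exp_x$ identifies $B_{\inj}(0)\subseteq T_x\Mcc$ with a neighbourhood of $x$. In these coordinates $\xi$ corresponds to $\widetilde r v$, and $g_{ij}(w)=\delta_{ij}+O(|w|^2)$. The model comparison is the Euclidean distance $\|\widetilde r v-w\|$, which by part (i.) with $R=\widetilde r$ equals $\widetilde r-\langle w,v\rangle+O(r^2/\widetilde r)$. It then remains to show that $d(\xi,\exp_x(w))$ differs from this by $O(r^2/\widetilde r)$ as well. A cleaner route is to avoid comparing metrics and instead Taylor expand the function $F(w):=d(\xi,\exp_x(w))$ around $w=0$, which is legitimate since $r<\widetilde r/2$.

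The expansion has three ingredients, in this order. First, $F(0)=\widetilde r$. Second, $F$ is smooth away from $\xi$ (and its cut locus; both points lie within the injectivity radius of each other since $\widetilde r+r<\ldots$, and if necessary I would shrink the admissible radii by a constant factor depending on $\Mcc$). By the first variation formula, $\nabla F(0)=-v$, because the gradient of the distance to $\xi$ at $x$ is minus the unit tangent to the geodesic from $x$ toward $\xi$. Third, the Hessian of $F$ satisfies $\|\nabla^2 F\|\ll 1/\widetilde r$ uniformly on the segment $\{tw\}$, since all those points lie at distance $\ge \widetilde r/2$ from $\xi$. This follows from Hessian comparison, since on a compact manifold with sectional curvature bounded by $K$ we have $\nabla^2 d(\xi,\cdot)\ll \frac{1}{d(\xi,\cdot)}+\sqrt K$, and $\sqrt K\ll 1/\widetilde r$ since $\widetilde r<\inj(\Mcc)$. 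One must also account for the map $\exp_x$ itself; its second derivatives at points of $B_r(0)$ are $O(r)$, which contributes $O(r^2)\ll r^2/\widetilde r$. Combining these via Taylor's theorem with remainder gives \eqref{eq:distortion Riemann}.

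The main obstacle is the uniform Hessian bound $\|\nabla^2_w F\|\ll 1/\widetilde r$, together with the curvature contribution being dominated by $1/\widetilde r$. My plan is to invoke standard Hessian comparison for the distance function on a closed manifold, restricted to a region away from $\xi$ and from its cut locus, and to use the boundedness of $\widetilde r$ to absorb the curvature term.
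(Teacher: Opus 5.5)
Your proposal follows essentially the paper's proof: (i) is the same direct expansion (the paper rotates $\xi$ to $(R,0,\ldots,0)$ and uses Pythagoras), and (ii) rests on the same three facts --- value $\widetilde r$ at $w=0$, gradient $-v$, and an $O(1/\widetilde r)$ Hessian bound from Hessian comparison --- which the paper packages by showing that $d_{2}(\widetilde r v,w)-d(\xi,\exp_{x}(w))$ has zero value and zero gradient at $w=0$ and $O(1/\widetilde r)$ Hessian, and then combining this with (i), whereas you Taylor expand $d(\xi,\exp_x(w))$ directly.

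Your caveat about shrinking the admissible radii by a constant depending on $\Mcc$ is genuinely needed, and the paper's proof passes over it. With only $\widetilde r<\inj(\Mcc)$, the point $\exp_x(w)$ can go past the cut locus of $\xi$, or approach a conjugate point, and the estimate then fails. For example, on the flat torus $\R^{2}/\Z^{2}$ with $\widetilde r=\frac12-\frac r2$ and $w\approx -rv$, the error is of order $r\gg r^{2}/\widetilde r$. This is harmless in the paper's application, where $\widetilde r$ is small.
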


\begin{proof}[Proof of Lemma \ref{lem:distortions plane waves}]

First we prove Lemma \ref{lem:distortions plane waves}(i.). By rotating $\R^{d}$ if necessary, we may assume that $\xi=(R,0,\ldots,0)$, whence \eqref{eq:dist distortion R2} reads
\begin{equation}
\label{eq:dist distortion R2 horizontal}
d_{2}(\xi,y) = R-y_{1} + O\left( \frac{r^{2}}{R}  \right),
\end{equation}
with $y=(y_{1},\ldots, y_{n})\in B_{r}(0)$. It is easy to derive the estimate \eqref{eq:dist distortion R2 horizontal} via a straightforward application of Pythagoras's theorem.

\vspace{2mm}

Now we turn to proving Lemma \ref{lem:distortions plane waves}(ii.). By Lemma \ref{lem:distortions plane waves}(i.) we have that
\begin{equation}
\label{eq:d2 distortion proj}
d_{2}(\widetilde{r}\cdot v,w) = \widetilde{r} - \langle w,v\rangle + O\Big( \frac{r^{2}}{\widetilde{r}}  \Big),
\end{equation}
on $w\in B_{r}(0)$. We then claim that
\begin{equation}
\label{eq:d2-d bound small ball}
\left|d_{2}(\widetilde{r}\cdot v,w) - d(\xi,\exp_{x}(w))\right| = O\Big( \frac{r^{2}}{\widetilde{r}}  \Big),
\end{equation}
which, together with \eqref{eq:d2 distortion proj} implies the statement \eqref{eq:distortion Riemann} of Lemma \ref{lem:distortions plane waves}(ii.), via the triangle inequality. 

Indeed, let us consider the function $$w\mapsto F(w) =  d_{2}(\widetilde{r}\cdot v,w) - d(\xi,\exp_{x}(w)).$$ Evidently, for $w=0$, 
$$d_{2}(\widetilde{r}\cdot v,0) =  \widetilde{r} = d(\xi,\exp_{x}(0)),$$ thus $F(0)=0$, and, in addtion, it is easy to check that, 
$$\nabla_{w} d_{2}(\widetilde{r}\cdot v,w)\big|_{w=0} = \nabla_{w} d(\xi,\exp_{x}(w))\big|_{w=0} = -v,$$ so $\nabla F(0) =0$. 
Further, an explicit computation shows that the Hessian of $d_{2}(\widetilde{r}\cdot v,\cdot)$ is bounded (entry-wise) by $O(\frac{1}{\widetilde{r}-r}) = O(\frac{1}{\widetilde{r}})$ on $B_{r}(0)$, and a forteriori, so is the other Hessian $\nabla^{2} d(\xi,\exp_{x}(\cdot))$, via the Hessian comparison theorem (it is possible to express the leading term for the latter in terms of the curvature tensor of $\Mcc$), hence \eqref{eq:d2-d bound small ball} follows. 
\end{proof}

\begin{proof}[Proof of Proposition \ref{prop:sign-bar exist man}]

Let $C>0$ be a sufficiently large constant to be chosen later, $$v_{1}=1,v_{2}=\omega,v_{3}=\omega^{3}\in\Sc^{d}\times \{0\}^{d-2}\subseteq \R^{d}$$ be the three roots of unity of 
degree $3$ of \S~\ref{sec:Euclid barrier}, augmented by $(d-2)$ zeros, $\varepsilon_{0}>0$ and
$t_{0}\in (0,1)$ the absolute constants prescribed in Proposition \ref{prop:Euclid sign-bar}, $\gamma_{d}$ as in \eqref{eq:gammad offset def} of Proposition \ref{prop:asymp covar waves}, and $\eta'\in [1,\eta]$ to be chosen later. 
For every $x\in \Mcc$ let 
\begin{equation}
\label{eq:iota ident Rd tangent}
\iota_{x}:\R^{d}\xrightarrow{\sim}T_{x}(\Mcc)
\end{equation}
be an identification of $\R^{d}\cong T_{x}(\Mcc)$, and $\exp_{x}(\cdot)$ the exponential map in $\Mcc$ based at $x$. 
(There is no continuous choice of $\iota_{x}$ as a function of $x$, but a measurable one will work.)

Given $x\in \Mcc$ and $r>0$, we set 
\begin{equation}
\label{eq:r tild def}
\widetilde{r}:= \min\left\{r'\ge C\cdot r^{2}T:\: r'\cdot T+\gamma_{d}\in 2\pi\Z\right\},
\end{equation} 
with a sufficiently large constant $C>0$ to be determined later, and where $\gamma_{d}$ is as in \eqref{eq:gammad offset def}. Let
\begin{equation}
\label{eq:s param def}
s:=\frac{2}{c_{d}\varepsilon_{0}}\cdot \left(\widetilde{r}\cdot T\right)^{\frac{d-1}{2}}
\end{equation}
with $c_{d}$ as in \eqref{eq:covar rand wav asymp off} of Proposition \ref{prop:asymp covar waves}(ii.), and $\alpha_{1}:=1$ and $\alpha_{2}=\alpha_{3}=t_{0}$. With this notation we define 
\begin{equation}
\label{eq:h sign-bar def gen}
\begin{split}
h(\cdot)&=h_{x,r}(\cdot) = -s\cdot\sum\limits_{j=1}^{3} \alpha_{j}K_{T,\eta'}\left(\exp_{x}\left(\iota_{x}\left(\widetilde{r}\cdot v_{j}\right)\right),\cdot\right) \\&= -\frac{2}{c_{d}\varepsilon_{0}}\left(\widetilde{r}\cdot T\right)^{\frac{d-1}{2}} \sum\limits_{j=1}^{3} \alpha_{j}K_{T,\eta'}\left(\exp_{x}\left(\iota_{x}\left(\widetilde{r}\cdot v_{j}\right)\right),\cdot\right) \in\Hc,
\end{split}
\end{equation}
a superposition of three waves (approximating plane waves) emanating from points on $\Mcc$ at distance $\widetilde{r}$ from the given point $x$. The function $h(\cdot)$ is illustrated in Figure \ref{fig:sign-barrier}.

\vspace{2mm}

In what follows, we claim that, for an appropriate choice of the parameters ($C$ sufficiently large, and $\eta'$ chosen below), $h$ is an element of $\Hc$, whose properties, \eqref{eq:sign bar norm bnd} and \eqref{eq:sign bar pos bias}, are asserted in Proposition \ref{prop:sign-bar exist man}. Namely, we will show that, inside the small ball $B_{x}(r)$ around $x$, properly scaled $h$ approximates the Euclidean sign-barrier, constructed in Proposition \ref{prop:Euclid sign-bar}.
First, to bound the norm of $h\in \Hc$, we use the definition \eqref{eq:K covar def expr} of $K_{T,\eta}(\cdot,\cdot)$ with $\eta'$ instead of $\eta$, together with the definition of the norm in $\Hc$, to yield that
\begin{equation}
\label{eq:kernel centred norm bnd}
\left\|K_{T,\eta'}\left(\exp_{x}\left(\iota_{x}\left(\widetilde{r}\cdot v_{j}\right)\right),\cdot\right)\right\|_{\Hc} \le \sqrt{\frac{N_{T,\eta}}{N_{T,\eta'}}}. 
\end{equation}
Bearing in mind that $t_{0}\in [0,1]$ (hence $\alpha_{1}=1$, and $\alpha_{j}\in [0,1]$, $j=2,3$), an application of the triangle inequality implies
\begin{equation}
\label{eq:hxr bnd gen}
\|h_{x,r}\|_{\Hc} \le 3s\cdot \sqrt{\frac{N_{T,\eta}}{N_{T,\eta'}}} = \frac{6}{c_{d}\varepsilon_{0}}\cdot \left(\widetilde{r}\cdot T\right)^{\frac{d-1}{2}}\cdot \sqrt{\frac{N_{T,\eta}}{N_{T,\eta'}}},
\end{equation} 
by the choice \eqref{eq:s param def} of the parameter $s$.

\begin{figure}[htp]

\centering
\includegraphics[width=.35\textwidth]{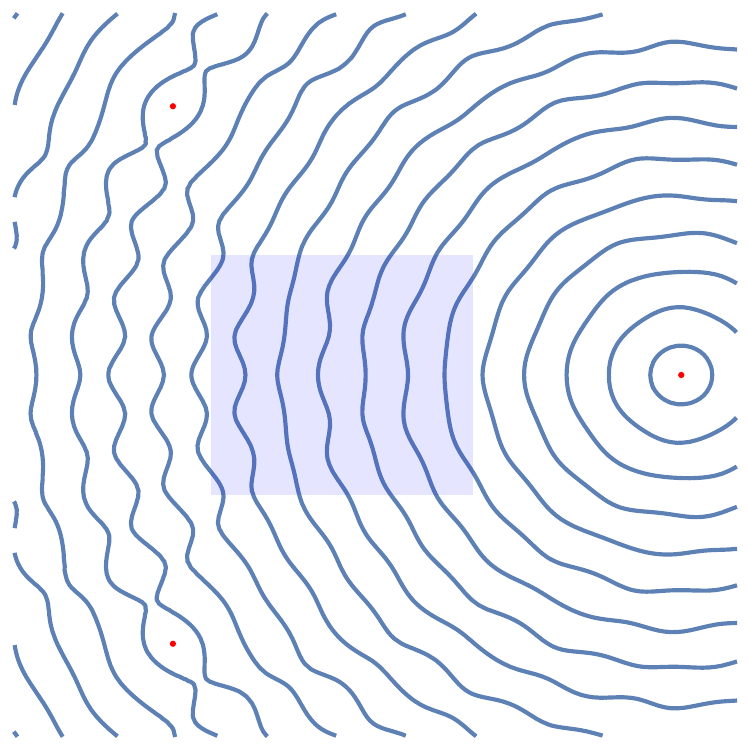}\hspace{1.5cm}
\includegraphics[width=.35\textwidth]{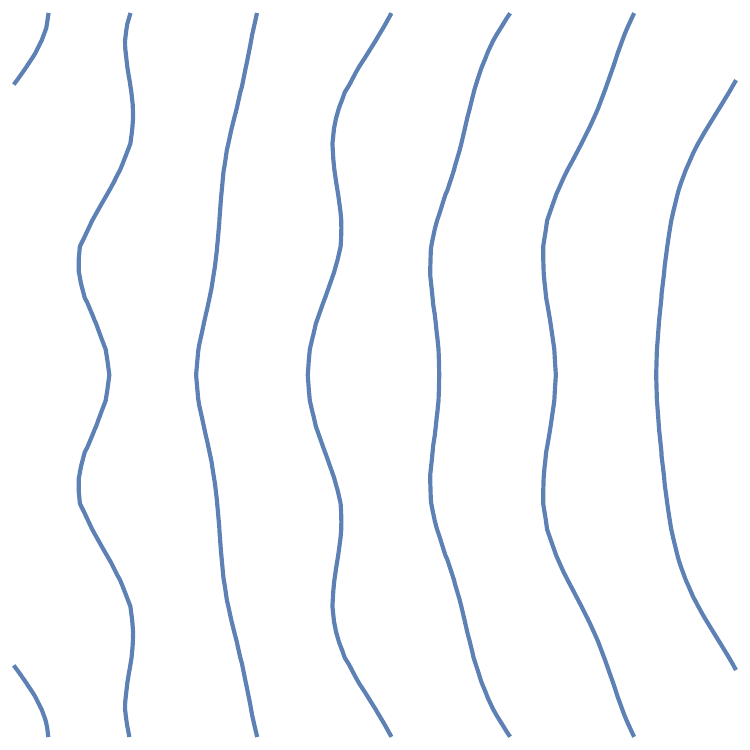}
\caption{The nodal set of the sign-barrier $h$, constructed within the proof of Proposition \ref{prop:sign-bar exist man} projected onto $\R^2$, and a close-up view of the shaded square. The function is a weighted superposition of three monochromatic kernels centred at the red dots; on the square it approximates the function $w_t$ of Proposition \ref{prop:Euclid sign-bar}.}
\label{fig:sign-barrier}

\end{figure}

\vspace{2mm}

Now, we assume $\Mcc$ is a `generic' manifold, i.e.\ we work under the settings of case (a) of Proposition \ref{prop:sign-bar exist man}, with the stronger constraints on $r$ and $\eta$. We set 
\begin{equation}
\label{eq:eta' def}
\eta' = \frac{1}{C}\cdot \min\Big\{\eta,\frac{1}{\tilde{r}}\Big\}
\end{equation}
and invoke Proposition \ref{prop:asymp covar waves}(ii.) to approximate the function $h(\cdot)$ in 
\eqref{eq:h sign-bar def gen}. Recalling the Euclidean sign-barrier $w_{t}$ of \eqref{eq:wt barrier def}, with properties prescribed in Proposition \ref{prop:Euclid sign-bar} for the particular value $t=t_{0}$, 
and comparing it to the asymptotic expansion of \eqref{eq:h sign-bar def gen}, when approximating $K_{T,\eta'}\left(\exp_{x}\left(\iota_{x}\left(\widetilde{r}\cdot v_{j}\right)\right),\cdot\right)$ using \eqref{eq:covar rand wav asymp off} 
we will find below that, in the relevant regime, $h(\cdot)$ may be approximated by $\frac{2}{\varepsilon_{0}}w_{t}$,
under the usual identification \eqref{eq:iota ident Rd tangent} of $\R^{d}\cong T_{x}(\Mcc)$ given by $\iota_{x}$, and the local bijection $\exp_{x}:T_{x} \rightarrow \Mcc$, as follows.

For $j=1,2,3$, let $u_{j}$ be the unit tangent vectors $u_{j}:=\iota_{x}\left( v_{j}\right) \in T_{x}(\Mcc)$ at $x$. Then, for 
$u\in B_{r}(0)\subseteq T_{x}(\Mcc)$ one has that
\begin{equation*}
d\left(  \exp_{x}\left(\iota_{x}\left(\widetilde{r}\cdot v_{j}\right)\right),\, \exp_{x}(u)\right) = \widetilde{r} - \langle u,u_{j}\rangle + O \Big( \frac{r^{2}}{\widetilde{r}}  \Big)= \widetilde{r}\cdot \Big(1+O\Big(\frac{1}{C}\Big)\Big),
\end{equation*}
by Lemma \ref{lem:distortions plane waves}(ii.). Hence, an application of Proposition \ref{prop:asymp covar waves}(ii.) yields the estimate on $u\in B_{r}(0)\subseteq T_{x}(\Mcc)$:
\begin{equation}
\label{eq:norm kern cosine}
\begin{split}
&\frac{1}{c_{d}}\left(\widetilde{r}\cdot T\right)^{\frac{d-1}{2}} \cdot  K_{T,\eta'}\left(\exp_{x}\left(\iota_{x}\left(\widetilde{r}\cdot v_{j}\right)\right), \exp_{x}(u)  \right)
\\& \qquad = \Big(1+O\Big(\frac{1}{C}\Big)\Big) \cdot \Big( \cos\left(\widetilde{r}T+\gamma_{d}- T\langle u,u_{j}\rangle    \right)  + O \Big( \frac{1}{C}  \Big) \Big)
\\& \qquad =  \cos\left( T\langle u,u_{j}\rangle    \right)  + O \Big( \frac{1}{C}  \Big),
\end{split}
\end{equation}
by the assumptions of Proposition \ref{prop:sign-bar exist man} on $r,T$ and $\eta$ (case (a)), our definitions \eqref{eq:r tild def} of $\widetilde{r}$ and \eqref{eq:eta' def} and $\eta'$, and the $2\pi$-periodicity of the cosine. 

\vspace{2mm}

Upon summing up the estimates \eqref{eq:norm kern cosine} multiplied by $\alpha_{j}$ for
$1\le j\le 3$, we finally obtain the announced comparison of the (Riemannian) sign-balance $h$ in \eqref{eq:h sign-bar def gen} to the Euclidean sign-balance $w_{t}$ in Proposition \ref{prop:Euclid sign-bar}:
\begin{equation}
\label{eq:h pullback approx barrier}
h(\exp_{x}(z))= \frac{2}{\varepsilon_{0}}w_{t}\left(\iota^{-1}(T\cdot z)\right) + O \Big( \frac{1}{C}  \Big)
\end{equation}
on $z\in B_{r}(0)\subseteq T_{x}(\Mcc)$. Let $\widetilde{h}:B_{r}(0) \rightarrow \R$ be the pullback function $\widetilde{h}(z):= h(\exp_{x}(z))$.
Then, Proposition \ref{prop:Euclid sign-bar}, whose conclusion is applicable to a scaling by $T$ of $w_{t}$, and a forteriori to its pushforward on $T_{x}(\Mcc)$, together with \eqref{eq:h pullback approx barrier} imply that
\begin{equation*}
\widetilde{\Dc}_{\widetilde{h};\eps_{2}}(x;r)>\eps_{2},
\end{equation*}
by further decreasing $\eps_{0}$ to a sufficiently small number $\eps_{2}>0$ depending only on $\Mcc$, provided that $C$ was chosen sufficiently large. 
We now assume that, given $\delta>0$, the radius $r$ is sufficiently small so that for every $\Ac\subseteq B_{r}(0)$ one has $$|A| = |\exp_{x}(A)|\cdot (1+O(\delta)).$$
Since $$\widetilde{\Dc}_{h;\eps}(x;r) = \frac{1}{|B_{r}(x)|} \left(|h^{-1}(\eps,\infty)| - |h^{-1}(-\infty,\eps)|   \right),$$ it follows that, by further reducing $\eps_{2}$ to $\eps_{1}$, by a factor depending only on the local geometry of $\Mcc$, 
\begin{equation*}
\widetilde{\Dc}_{h;\eps_{1}}(x;r)>\eps_{1},
\end{equation*}
that is \eqref{eq:sign bar pos bias}.

\vspace{2mm}

Now we show the bound \eqref{eq:sign bar norm bnd} for the norm of $h$, still within the context of case (a) of Proposition \ref{prop:sign-bar exist man} pertaining to general smooth manifold, bearing in mind the general bound \eqref{eq:hxr bnd gen}, 
and the particular choice \eqref{eq:eta' def} of the parameter $\eta'$. In fact, \eqref{eq:sign bar norm bnd} easily follows from substituting \eqref{eq:eta' def} into the consequence \eqref{eq:1/N(T,eta) asymp} of Weyl's law, 
and finally into \eqref{eq:hxr bnd gen}, upon recalling the asymptotics $\widetilde{r}\sim C r^{2}T$ that is a straightforward conclusion from \eqref{eq:r tild def}. That concludes the treatment of case (a) of Proposition \ref{prop:sign-bar exist man}. The proof of case (b) Proposition \ref{prop:sign-bar exist man} pertaining to the round sphere follows literally along the same lines as above, except that it appeals to Proposition \ref{prop:asymp covar spher} in place of Proposition \ref{prop:asymp covar waves}, and, as a consequence, is less restrictive in terms of the constraints imposed on $r,\eta$. 
\end{proof}

\subsection{Volume-bias lower concentration II: Proof of Theorem \ref{t:clb2}}
\label{sec:lower defect-concentration proof}

Using the construction in Proposition \ref{prop:sign-bar exist man} we may conclude the proof of Theorem \ref{t:clb2}. In the proof we make use, in essential way, 
of the fact that $(f,u) \mapsto \widetilde{\Dc}_{f,u}(x;r)$ is non-decreasing in $f$ and non-increasing in $u$. 

\begin{proof}[Proof of Theorem \ref{t:clb2}]
Let $x \in \Mcc$ and $u \in \R$ be given; by the monotonicity of $u \mapsto \widetilde{\Dc}_{T,u}(x;r)$, it is sufficient to consider the case $u \ge 0$. 
By Proposition \ref{prop:asymp covar waves}(i.) and our assumption that either $\eta(T) \ge C$ (a consequence of $\eta>C(rT)^{d-1}$ in case (a) of Theorem \ref{t:clb2}) 
or $M = \mathbb{S}^d$ (case (b) of Theorem \ref{t:clb2}), one has that 
\begin{equation}
\label{eq:var(fTeta) bounded}
\var\left(f_{T,\eta}(x)\right)  \le C_1
\end{equation}
with some constant $C_1 > 0$ depending only on $\Mcc$.
Let $\eps = \eps_0 > 0$ be the absolute constant in Proposition \ref{prop:sign-bar exist man}, and let $h \in \Hc$ be such that $\widetilde{\Dc}_{h;\eps}(x;r) > \eps$ and $\|h\|_{\Hc}^2 < C \cdot \max\{1,r\eta\} \cdot (rT)^{2(d-1)}$, as prescribed by Proposition \ref{prop:sign-bar exist man}. Denote the normalised element $\widehat{h}:= \frac{h}{\|h\|_{\Hc}}$,  so that the random wave $f_T = f_{T,\eta}$ has the representation as 
\begin{equation}
\label{eq:f=Zh+tildf}
f_T(x) \stackrel{d}{=}  Z\cdot \widehat{h}(x) + \tilde{f}(x)   ,
\end{equation}
where $Z$ is a standard Gaussian, and $\tilde{f}$ is a Gaussian random field on $\Mcc$, independent of $Z$. 
In particular, one has that 
\begin{equation}
\label{eq:var(ftild) bounded}
\var \big( \tilde{f}(x) \big) \le \var\left( f_{T,\eta}(x)\right)\le C_{1}
\end{equation}
for every $x\in\Mcc$, by \eqref{eq:var(fTeta) bounded}. 

\vspace{2mm}

Now choose a constant $C_2 > 0$ sufficiently large so that 
\begin{equation}
\label{eq:Phi(-eps c2/sqrt(c1))<eps/4}
\Phi(- \eps C_2/\sqrt{C_1}) \le \eps / 8,
\end{equation}
where $\Phi(\cdot)$ is the Gaussian cdf. We set $v := \max\{u,C_2\}$, and consider the event $\Ecc:=A \cap B$, where
\[ A := \left\{ Z \ge 2 v \cdot \|h\|_{\Hc}  \right\} \quad \text{and} \quad B :=  \bigg\{   \frac{ | \widetilde{f}^{-1}(-\infty,- \eps v)  \cap B_{r}(y) | }{|B_r(y)|  }   <  \frac{\eps}{4} \bigg\} ,\]
with $Z$ and $\widetilde{f}$ as in \eqref{eq:f=Zh+tildf}. 
The {\em independent} events $A$ and $B$ imply respectively that  
\[ \widetilde{\Dc}_{ Z \cdot \widehat{h} ; \, 2 \eps v }(x;r)  > \eps \quad \text{and}  \quad \widetilde{\Dc}_{ T ;\,   \eps v}(x;r)  -  \widetilde{\Dc}_{ Z \cdot\widehat{h};\,  2 \eps v }(x;r)  \ge - \frac{\eps}{2} ,\]
so that, on $\Ecc$, we have 
\begin{equation}
\label{eq:D>=eps/2 on E}
\widetilde{\Dc}_{T, \eps u}(x;r) \ge \widetilde{\Dc}_{T, \eps v}(x;r) >  \frac{\eps}{2}  ,
\end{equation}
since $u\le v$.

To estimate 
\begin{equation}
\label{eq:pb(E)=pb(A)*pb(B)}
\Pb(\Ecc)=\Pb(A \cap B) = \Pb(A) \cdot \Pb(B)
\end{equation}
we first observe that
\begin{equation}
\label{eq:pb(A) lower bnd}
\Pb(A) \ge e^{-c_{0} v^2 \|h\|_H^2}  \ge e^{- c_{0} v^2 C \max\{1,r^2 T \eta\} (rT)^{2(d-1)} },
\end{equation}
with an absolute constant $c_{0} > 0$.
Moreover, by Markov's inequality, \eqref{eq:var(ftild) bounded}, and the choice of $C_2 > 0$ satisfying \eqref{eq:Phi(-eps c2/sqrt(c1))<eps/4},
\begin{equation}
\label{eq:pb(B) lower bnd}
\begin{split}
\Pb(B) &\ge 1 -  \frac{ \E\big[  |   \tilde{f}^{-1}( -\infty,  - \eps v)  \cap B_y(r) | \big]  }{  (\eps/4)\cdot |B_y(r)|   } = 1 -  \frac{ \Phi(-\eps v/\sqrt{C_1})}{\eps/4 }  \\&\ge 1 -  \frac{ \Phi(-\eps C_2/\sqrt{C_1})}{\eps/2 } \ge 1/2.
\end{split}
\end{equation}
The statement of Theorem \ref{t:clb2} now follows upon multiplying the inequalities \eqref{eq:pb(A) lower bnd} and \eqref{eq:pb(B) lower bnd}, substituting the outcome into \eqref{eq:pb(E)=pb(A)*pb(B)}, on
bearing in mind that $\Ecc$ implies \eqref{eq:D>=eps/2 on E}, and adjusting the various constants accordingly.
\end{proof}

\subsection{Volume-bias lower concentration I: Proof of Theorem \ref{t:clb}}
The proof of Theorem \ref{t:clb} is similar to that of Theorem \ref{t:clb2}, except that a significantly simpler analogue of the sign-barrier (termed {\em level-barrier}) is constructed, only requiring that the volume-bias at some positive level $\eps > 0$ is larger than $-\eps$ (rather than $>\eps$ as in Proposition \ref{prop:sign-bar exist man}). Indeed, instead of superimposing three monochromatic waves as in the previous construction, here we settle for a perfectly sign-balanced realisation of a single monochromatic wave.

\begin{proposition}[Level-barrier]
\label{prop:lev-bar exist man}
There exist numbers $C>0$, $T_{0}>0$ sufficiently large, only depending on $\Mcc$, with the following property. Let $T>T_{0}$, 
$\eta \in [1,T]$, $r > 0$, $u > 0$, satisfy either (a) $\frac{C}{T}<r<\frac{1}{C} \cdot \frac{1}{T^{\frac{d-1}{d+1}}}$ and $\eta> C\cdot (rT)^{\frac{d-1}{2}}$, or (b) $\Mcc=\Sc^{d}$, and $\frac{C}{T}<r<\frac{1}{C}$. 
Then, for every $x\in\Mcc$, and $\eps>0$ there exists a number $w=w(\eps)>0$ sufficiently small, and an element $h=h_{x,r;\eps}\in\Hc$ of the RKHS of norm
\begin{equation}
\label{eq:lev bar norm bnd}
\|h\|^2_{\Hc} \le C \max\{ 1,r\cdot\eta  \}\cdot (rT)^{d-1},
\end{equation}
satisfying
\begin{equation}
\label{eq:lev bar pos bias}
\widetilde{\Dc}_{h; w}(x;r) > - \eps.
\end{equation}
\end{proposition}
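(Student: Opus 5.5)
We take the level-barrier to be a rescaled single reproducing kernel emanating from a distant point, $h := s\cdot K_{T,\eta'}(\xi,\cdot)$. Here $\xi=\exp_x(\iota_x(\widetilde r\, v_1))$ for a unit vector $v_1$ and a distance $\widetilde r$ to be fixed, and $\eta'\in[1,\eta]$ is a restricted band-width. The heuristic is as follows. On $B_r(x)$ the kernel is, by Proposition \ref{prop:asymp covar waves}(ii.) (resp.\ Proposition \ref{prop:asymp covar spher} for the sphere) and Lemma \ref{lem:distortions plane waves}(ii.), close to $c_d(\widetilde rT)^{-(d-1)/2}\cos(\widetilde r T+\gamma_d - T\langle u,u_1\rangle)$. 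After multiplying by $s\asymp(\widetilde rT)^{(d-1)/2}$, $h$ looks like a plane wave $A\cos(T\langle u,u_1\rangle+\phi)$ of amplitude $A\gg 1$. A plane wave is perfectly sign-balanced on large balls, so its uncentred volume-bias at level $0$ is $o(1)$, and hence $>-\eps/2$, once $rT$ is large. Raising the level to $w$ and allowing an $O(1/C)$ error in the approximation changes the volume-bias only by the measure of $\{|h|\le w+O(1/C)\}$. Since $A$ is large, this set is a thin neighbourhood of the nodal hyperplanes, of relative measure $O((w+1/C)/A)$, which is below $\eps/2$ for $w$ small. That gives \eqref{eq:lev bar pos bias}. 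In contrast to Proposition \ref{prop:sign-bar exist man}, we do not need the error to be $o(1)$ relative to a fixed profile.

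The parameters are chosen as follows. To make the quadratic error $O(r^2/\widetilde r)$ of Lemma \ref{lem:distortions plane waves}(ii.) negligible after multiplication by $T$, we would ideally take $\widetilde r\ge Cr^2T$. However, the claimed norm $(rT)^{d-1}\max\{1,r\eta\}$ is smaller than the one in \eqref{eq:sign bar norm bnd}, so we cannot afford that. We therefore take $\widetilde r := 2r$ (or $\widetilde r\asymp r$). We still apply Lemma \ref{lem:distortions plane waves}(ii.), but now the phase is $\widetilde rT - T\langle u,u_1\rangle + O(r^2T/\widetilde r)$, and the last term is not small. The fix is to observe that the exact phase is $T\,d(\xi,\exp_x(u))$, so $h\approx A(u)\cos(T\,d(\xi,\exp_x u)+\gamma_d)$. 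The level sets of $d(\xi,\cdot)$ are geodesic spheres of radius $\asymp r$ with curvature $O(1/r)$, which is small on the wavelength scale $1/T$. Since $rT$ is large, such a ``spherical wave'' is still sign-balanced on $B_r(x)$, up to $o(1)$: in each shell of width $\pi/T$, the positive and negative parts have nearly equal volume. The amplitude $A(u)=s\, c_d(d(\xi,\exp_x u)T)^{-(d-1)/2}$ stays comparable to a large constant, by choosing $s=C'(\widetilde rT)^{(d-1)/2}$. The errors in \eqref{eq:covar rand wav asymp off} are $O(\eta'\widetilde r + 1/(\widetilde rT) + (\widetilde rT)^{(d-1)/2}/\eta')$. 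With $\eta'=\frac1C\min\{\eta,1/\widetilde r\}$, the first is $O(1/C)$. The third is $O(1/C)$ precisely because the hypotheses $\eta> C(rT)^{(d-1)/2}$ and $1/r > C(rT)^{(d-1)/2}$ hold, the latter being the bound $r<\frac1C T^{-(d-1)/(d+1)}$. On the sphere the third term is absent, which explains the weaker hypothesis (b).

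For the norm, \eqref{eq:kernel centred norm bnd} gives $\|h\|_\Hc^2\le s^2 N(T,\eta)/N(T,\eta')$. Weyl's law in the form \eqref{eq:1/N(T,eta) asymp} gives $N(T,\eta)/N(T,\eta')\asymp\eta/\eta'\asymp C\max\{1,\eta r\}$. Hence $\|h\|_\Hc^2\ll (rT)^{d-1}\max\{1,r\eta\}$, which is \eqref{eq:lev bar norm bnd}. On the sphere with $\eta$ small we simply take $\eta'=\eta$.

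The main obstacle is the balance statement for the curved, slowly modulated wave $A(u)\cos(T\rho(u)+\gamma_d)$ with $\rho=d(\xi,\exp_x\cdot)$ on $B_r(x)$. Our plan is to use the coarea formula in the variable $\rho$, with $|\nabla\rho|=1$. The volume of $\{h>w\}$ then equals $\int \id\{A\cos(T\rho+\gamma_d)>w+O(1/C)\}\,\sigma(\rho)\,d\rho$, where $\sigma(\rho)$ is the area of the slice $\{\rho=\mathrm{const}\}\cap B_r(x)$, which is Lipschitz (in fact smooth) in $\rho$ on scale $r$. Over each period of length $2\pi/T$, the contributions of the positive and negative parts then differ by $O(\sigma/(rT))+O(w/A)\sigma$. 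Summing over periods gives a relative defect of $O(1/(rT)+w/A+1/C)$. This is $>-\eps$ once $C$ is large and $w$ is small, which completes the argument, with $T_0,C$ depending only on $\Mcc$.
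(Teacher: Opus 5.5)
Your proposal is correct and is essentially the paper's argument with one change. The shared core is this: a single kernel $K_{T,\eta'}$ with $\eta'\asymp\min\{\eta,1/r\}$, rescaled by $(rT)^{(d-1)/2}$, looks on $B_r(x)$ like a spherical wave of frequency $T$. Such a wave is balanced at level $0$ up to $O(1/(rT))$, so its uncentred volume-bias at a small level $w>0$ exceeds $-\eps$. The norm bound then comes from \eqref{eq:kernel centred norm bnd} and \eqref{eq:1/N(T,eta) asymp}, exactly as you do it.

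The change is where you centre the kernel. The paper takes $h=c_d^{-1}(rT)^{(d-1)/2}K_{T,\eta'}(x,\cdot)$, centred at $x$ itself. Its slices $\{d(x,\cdot)=\rho\}$ are then full geodesic spheres, and balance is read off from the Euclidean radial profile $\cos(\|a\|+\gamma_d)$, whose excursion set above $u$ has density $\arccos(u)/\pi$. The paper's sketch writes $h(y)=\cos(d(x,y)T+\gamma_d)+O(1/C)$, but this passes over two points. The true amplitude is $(r/d(x,y))^{(d-1)/2}$, not constant. Also, \eqref{eq:covar rand wav asymp off} degenerates as $d(x,y)\to 0$. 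So one must excise a small ball $B_{r/K}(x)$ of negligible volume, and note that amplitude $\ge 1$ only helps at a positive level.

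Your off-centre choice $d(\xi,x)=2r$ keeps $d(\xi,y)\in[r,3r]$ on the ball. The asymptotics therefore hold uniformly and the amplitude stays within a bounded ratio. The cost is that the slices $\{d(\xi,\cdot)=\rho\}\cap B_r(x)$ are only partial spheres, which is why you need the coarea, period-by-period count.

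A few small repairs are needed.

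\begin{itemize}
\item The slice area $\sigma(\rho)$ is not Lipschitz at the endpoints $\rho=r,3r$ when $d=2$; there $\sigma\asymp\sqrt{r(\rho-r)}$. Your estimate, however, only needs $\sum_k \mathrm{osc}_{P_k}\sigma\ll r^{d-1}$ over the periods $P_k$, i.e.\ bounded variation of $\sigma$. This holds, for example, via geodesic polar coordinates at $\xi$ together with convexity of small balls: each ray from $\xi$ meets $B_r(x)$ in an interval.
\item If the same $C$ appears both in $\eta'=\frac1C\min\{\eta,1/\widetilde r\}$ and in the hypothesis $\eta>C(rT)^{(d-1)/2}$, the third error term in \eqref{eq:covar rand wav asymp off} is only $O(1)$, not $O(1/C)$. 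Take the hypothesis constant of larger order (the paper's sketch has the same constant slippage).
\item Like the paper, you need $C$ to grow as $\eps\to0$, since the $O(1/C)$ and $O(1/(rT))$ errors must be $\ll\eps$. This contradicts the literal quantifiers of the statement, but it is harmless because Theorem \ref{t:clb} allows $C=C(\Mcc,\eps)$.
\end{itemize}
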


The proof of Proposition \ref{prop:lev-bar exist man} follows along similar (but simpler) lines to that of Proposition \ref{prop:sign-bar exist man} above, hence we will only sketch the proof to avoid repetition.
\begin{proof}

We will assume throughout the proof that $\Mcc$ is a general manifold, i.e. work under scenario (a) of Proposition \ref{prop:lev-bar exist man}, the other case being simpler.
Let $\eta'$ be the parameter $\eta' = \min\{\eta,1/r\}$, and choose the element $h\in\Hc$, defined as the function
\begin{equation*}
h(\cdot) = h_{x,r}(\cdot) = c_d^{-1} (rT)^{\frac{d-1}{2}} K_{T,\eta'}(x, \cdot).
\end{equation*}
We claim that $h$ satisfies the properties asserted in Proposition \ref{prop:lev-bar exist man}. To this end we invoke Proposition \ref{prop:asymp covar waves}(ii.), implying that, on $y\in B_{r}(x)$, one has
\begin{equation}
\label{eq:h barrier = cosine 1 wave}
h(y) = \cos( d(x,y)\cdot T + \gamma_d) + O\Big(\frac{1}{C}\Big),
\end{equation}
for $C>0$ sufficiently large number.

\vspace{2mm}

We observe that for $u\in (-1,1)$, the inverse image $p^{-1}((u,\infty))$ of the excursion set at level $u$ of the function 
$a\mapsto p(a)=\cos\left(\|a\|+\gamma_{d}\right)$ on $\R^{d}$ consumes proportion $\varphi(u)=\frac{\arccos(u)}{\pi}$ of the entire space $\R^{d}$. That is, as $R\rightarrow  \infty$,
\begin{equation*}
\Dc_{p,u}(0;R) \rightarrow \phi(u):=2\varphi(u)-1,
\end{equation*}
{\em uniformly} w.r.t. $u$. In particular, $p$ is balanced at level $0$, i.e. $\phi(0)=0$.  
Hence, for every $0<\eps<1$, $\phi^{-1}(-\eps)>0$, so that one may find a number $w=w(\eps) \in (0,\phi^{-1}(-\eps))$, and, evidently, for $R>R_{0}$ sufficiently large (absolute),  
\begin{equation*}
\Dc_{p,w}(0;R) > -\eps.
\end{equation*}

\vspace{2mm}

Now let $\widetilde{h}:T_{x}(\Mcc)\rightarrow\R$ be the pullback $\widetilde{h}(z) = h(\exp_{x}(z))$ of $h$. Then the above, together with the asymptotics \eqref{eq:h barrier = cosine 1 wave} (on taking into account the scaling by $T$) 
shows that one has
\begin{equation*}
\Dc_{\widetilde{h},w}(0;r) > -\eps,
\end{equation*}
with a slightly decreased $w>0$, as a result of the arising error term in \eqref{eq:h barrier = cosine 1 wave}. One may then pushforward this result to $h$ by further decreasing $w$, i.e.\ assert \eqref{eq:lev bar pos bias}, in a manner similar to the way done at the end of the proof of Proposition \ref{prop:sign-bar exist man} above, on reusing the {\em small} distortions in volume measure around $x$ in $\Mcc$, 
as a by-product of the curvature. The norm claim \eqref{eq:lev bar norm bnd} on $h$ also follows along the same lines as in Proposition \ref{prop:sign-bar exist man}, on reusing \eqref{eq:kernel centred norm bnd},
and Weyl's law \eqref{eq:1/N(T,eta) asymp} (rather, its consequence).
\end{proof}

We are now in a position to give a proof for Theorem \ref{t:clb}. Again it follows along similar lines to the proof of Theorem \ref{t:clb2} above, so we will only sketch the proof.

\begin{proof}[Proof of Theorem \ref{t:clb}]
Let $x \in \Mcc$ and $u \ge 0$ be given. For a given $\eps>0$, we aim for a bound of the form 
\[ \prob \Big(  \widetilde{\Dc}_{T, u}(x;r) > -\eps \Big) \le \alpha,   \]
with $\alpha=\alpha(u)$.  We now apply Proposition \ref{prop:lev-bar exist man} with the given $\eps$ to obtain a number $w=w(\eps)>0$ and an element $h \in \Hc$ so that 
$\widetilde{\Dc}_{h;w}(x;r) > -\eps$ and $\|h\|_{\Hc}^2 < C \max\{1,r\eta\} \cdot (rT)^{d-1}$. 
Then, in accord with \eqref{eq:f=Zh+tildf}, on putting $\widehat{h}:= \frac{h}{\|h\|_{\Hc}}$, the random field $f_T = f_{T,\eta}$ may be represented as 
\[ f_T(x) \stackrel{d}{=}  Z\cdot \widehat{h}(x) + \tilde{f}(x),   \]
where $Z$ is a standard Gaussian, and $\tilde{f}$ is a Gaussian random field on $\Mcc$, independent of $Z$, 
such that $\textrm{Var}[ \tilde{f}(x) ] \le \textrm{Var}[f_{T,\eta}(x)]   \le C_1$ for a constant $C_1 > 0$ depending only on $\Mcc$. 

\vspace{2mm}

Choose a constant $C_2 > 0$ large enough so that $\Phi(- \eps C_2/\sqrt{C_1}) \le \eps / 2$, where $\Phi(\cdot)$ is the Gaussian cdf, and let $v = \max\{u,C_2\}$. Now let $\Ecc$ be the event $\Ecc:=A \cap B$, where
\[ A = \big\{ Z \ge 2 v \|h\|_{\Hc}  \big\} \quad \text{and} \quad B = \bigg\{   \frac{ | \tilde{f}^{-1}(-\infty,- w v)  \cap B_y(r) | }{|B_y(r)|  }   <  \eps  \bigg\} . \]
The events $A$ and $B$ respectively imply that 
\[ \widetilde{\Dc}_{ Z\cdot \widehat{h} ;  \, 2 w\cdot  v }(x;r)  > - \eps \quad \text{and}  \quad \widetilde{\Dc}_{T ;   w v}(x;r)  -  \widetilde{\Dc}_{ Z\cdot \widehat{h} ; \, 2 w v }(x;r)  \ge - 2\eps   ,\]
so that, on $\Ecc$, we have 
\[ \widetilde{\Dc}_{T, \eps u}(x;r) \ge \widetilde{\Dc}_{T, \eps v}(x;r) > -3 \eps  . \]
Moreover since
\[ \Pb(A) \ge e^{-c_{0} v^2 \|h\|_{\Hc}^2}  \ge e^{- c_{0} v^2 C \max\{1,r\eta\} (rT)^{d-1} } ,\]
with some absolute $c_{0}>0$, and by Markov's inequality and the above choice of $C_2 > 0$,
\[ \Pb(B) \ge 1 -  \frac{ \E\big[  |   \tilde{f}^{-1}( -\infty,  - \eps v)  \cap B_y(r) \}| \big]  }{  \eps |B_y(r)|   }  \ge 1 -  \frac{ \Phi(-\eps C_2/\sqrt{c_1})}{\eps } \ge 1/2 \]
by adjusting constants we obtain the desired conclusion.
 \end{proof}

\medskip

\section{Sign-balance of random waves: Proof of Theorem \ref{thm:sign bal rand wav} and Corollary \ref{cor:as}}
\label{sec:main res proofs}

\subsection{Upper bound for sign-balance: Proof of Theorem \ref{thm:sign bal rand wav}(i.) and Corollary \ref{cor:as}}
\label{sec:proof upper bnd sign-balance}

For a fixed $u$ and $(x,r)\in \Mcc\times \R_{>0}$, Theorem \ref{t:cub} implies that $|\Dc_{T,u}(x;r)|<\eps$ occurs with high probability provided that $r$ is sufficiently large. To assert Theorem \ref{thm:sign bal rand wav}(i.) we will need to prove the same for the {\em supremum} of $|\Dc_{T,u}(x;r)|$ w.r.t.\ both $x$ and $r \ge r_0$. For this purpose we shall rely on the following \textit{stability} property of the volume bias w.r.t.\ perturbations of $x$ and $r$, which will allow us to apply Theorem \ref{t:cub} to a suitably chosen `dense' net. 

\begin{lemma}
\label{l:stability}
For every $\eps > 0$ there exists a number $\delta  = \delta(\Mcc,\eps) > 0$ with the following property. For all smooth functions $f:\Mcc\rightarrow\R$, $u\in\R$, $x\in\Mcc$, $r>0$, $y\in B_{\delta\cdot r}(x)$ and 
$r' \in [r, (1+\delta)r]$ so that $r'<\inj(\Mcc)$, one has
\begin{equation*}
\left|\Dc_{f;u}(y;r')-\Dc_{f;u}(x;r)\right|<\eps,
\end{equation*}
where $\Dc_{\cdot,\cdot}(\cdot,\cdot)$ is as in \eqref{eq:defect def lev}.
\end{lemma}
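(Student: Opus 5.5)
The approach is purely measure-theoretic; no property of $f$ beyond measurability is needed. Write $G(z):=H(f(z)-u)-\tau(u)$, so that $|G|\le 2$ pointwise and
\[
\Dc_{f;u}(x;r)=\frac{1}{|B_r(x)|}\int_{B_r(x)}G(z)\,dz .
\]
The whole difference $\Dc_{f;u}(y;r')-\Dc_{f;u}(x;r)$ will be controlled by the volume of the symmetric difference $B_{r'}(y)\,\triangle\, B_r(x)$ and by the ratio of the two ball volumes. We first split
\begin{equation*}
\Dc_{f;u}(y;r')-\Dc_{f;u}(x;r)=\frac{1}{|B_{r'}(y)|}\Big(\int_{B_{r'}(y)}G-\int_{B_r(x)}G\Big)+\Big(\frac{1}{|B_{r'}(y)|}-\frac{1}{|B_r(x)|}\Big)\int_{B_r(x)}G .
\end{equation*}
The first term is bounded in absolute value by $2|B_{r'}(y)\triangle B_r(x)|/|B_{r'}(y)|$. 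The second is bounded by $2\,\big||B_r(x)|/|B_{r'}(y)|-1\big|$.

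It therefore suffices to show two things, uniformly in $x\in\Mcc$ and $0<r<r'<\inj(\Mcc)$:
\begin{equation*}
|B_{r'}(y)\triangle B_r(x)|\le \omega(\delta)\,|B_r(x)|
\qquad\text{and}\qquad
\frac{|B_{r'}(y)|}{|B_r(x)|}\in [1-\omega(\delta),\,1+\omega(\delta)],
\end{equation*}
where $\omega(\delta)\to 0$ as $\delta\to 0$. By the triangle inequality for distances,
\[
B_{r-\delta r}(x)\subseteq B_{r'}(y)\subseteq B_{r'+\delta r}(x)\subseteq B_{(1+2\delta)r}(x),
\]
so the symmetric difference lies in the annulus $B_{(1+2\delta)r}(x)\setminus B_{(1-\delta)r}(x)$. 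Both claims thus reduce to a uniform annulus estimate:
\begin{equation*}
\frac{|B_{(1+a)\rho}(x)|}{|B_{\rho}(x)|}\le 1+O(a),\qquad 0\le a\le 1,\quad \rho<\inj(\Mcc),
\end{equation*}
with the implied constant depending only on $\Mcc$.

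This annulus estimate is the one genuinely geometric step, and the only real obstacle is keeping it uniform in both $x$ and the radius. To prove it, compute the volume in geodesic polar coordinates: $|B_\rho(x)|=\int_0^\rho A_x(s)\,ds$, where $A_x(s)$ is the area of the geodesic sphere of radius $s$ about $x$. By compactness and smoothness of the metric, $A_x(s)=c_ds^{d-1}(1+O(s^2))$ uniformly in $x$ for $s$ below a fixed fraction of $\inj(\Mcc)$, and $A_x(s)\asymp s^{d-1}$ on the whole range $s<\inj(\Mcc)$. Hence $|B_{(1+a)\rho}\setminus B_\rho|\ll a\rho\cdot \rho^{d-1}\ll a|B_\rho|$. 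If $(1+2\delta)r$ exceeds the injectivity radius, one caps the outer radius at $\inj(\Mcc)$; the estimate survives by the same two-sided comparability of $A_x(s)$. Finally, choosing $\delta$ so that $4\,\omega(\delta)<\eps$ (shrinking $\delta$ slightly to absorb constants) gives the lemma.
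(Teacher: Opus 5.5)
Your argument is correct and is essentially the paper's. Both bound the difference by the relative volume of $B_{r'}(y)\triangle B_r(x)$ and then invoke uniform volume comparison: the paper via Bishop--Gromov and compactness, you via a concentric-annulus sandwich about $x$ and polar coordinates. Your ratio term is in fact dominated by the same quantity, since $\big||B_{r'}(y)|-|B_r(x)|\big|\le|B_{r'}(y)\triangle B_r(x)|$.

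One small correction. The two-sided bound $A_x(s)\asymp s^{d-1}$ fails as $s\to\inj(\Mcc)$: on $\Sc^d$ the sphere area $\propto\sin^{d-1}s$ vanishes at $s=\pi$. Also, capping at $\inj(\Mcc)$ is not a valid containment, since $B_{r'}(y)$ can protrude beyond $B_{\inj(\Mcc)}(x)$. Neither is needed, though: the upper bound $A_x(s)\ll s^{d-1}$ (Bishop's comparison, which persists past the cut locus), together with $|B_\rho(x)|\gg\rho^d$ for $\rho<\inj(\Mcc)$, already gives your annulus estimate.
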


\begin{proof}
The definition \eqref{eq:defect def lev} of volume-bias clearly implies that
\begin{equation}
\label{eq:bias-bias perturb}
\left|\Dc_{f;u}(y;r')-\Dc_{f;u}(x;r)\right|\le  4\cdot \frac{ | B_{r}(x) \triangle B_{r'}(y) | }{  \min\{ |B_{r}(x)|, |B_{r'}(y)| \} } .
\end{equation}
For $y \in B_{\delta\cdot r}(x)$ and $r' \in [r, (1+\delta)r]$, we can further bound \eqref{eq:bias-bias perturb} as
\begin{equation}
\label{eq:bias-bias<=geom}
\begin{split}
&\left|\Dc_{f;u}(y;r')-\Dc_{f;u}(x;r)\right|\le  \max\limits_{x \in \Mcc} \frac{ \max\limits_{y \in B_{\delta\cdot r}(x) }  | B_{r}(x) \triangle B_{r}(y) | }{  \min\limits_{y \in B_{\delta\cdot r}(x)}   |B_{r}(y)|   } + 
\max\limits_{x \in \Mcc} \frac{  | B_{r}(x) \triangle B_{(1+\delta)r}(x) |}{  |B_{r}(x)|   } 
\\&\le  \max\limits_{x \in \Mcc} \frac{ \max\limits_{y \in B_{\delta\cdot r}(x) }  | B_{r}(x) \triangle B_{r}(y) | }{  \min\limits_{y \in B_{\delta\cdot r}(x)}   |B_{r}(y)|   } + 
\max\limits_{x \in \Mcc} \frac{  | B_{(1+\delta)r}(x)\setminus B_{r}(x) |}{  |B_{r}(x)|   } .
\end{split}
\end{equation}

We now claim that the two terms on the r.h.s.\ of \eqref{eq:bias-bias<=geom} can be made arbitrary small by choosing $\delta>0$ sufficiently small (depending on $\Mcc$). Indeed, while for the round sphere, it is easy to derive the said conclusion via an explicit computation, in general it follows from a standard argument appealing to the Bishop-Gromov comparison inequality, the compactness of $\Mcc$, and Cantor's theorem.
\end{proof}

We are now in a position to conclude the proof of Theorem \ref{thm:sign bal rand wav}(i.).

\begin{proof}[Proof of Theorem \ref{thm:sign bal rand wav}(i.)]

Let $\bar{r}_T$ be as in \eqref{eq:rl cric rad spher harm}. We aim to prove that, for $\mu > 0$, one has for every $u \in \R$ and $\varepsilon > 0$, as $T \to \infty$
\begin{equation}
\label{e:claim}
\Pb \Big( \sup_{r \ge \mu \bar{r}_T}  \mathcal{B}_{T,u}(r) >  \eps \Big) =O\left( T^{d-c  \mu^{d-1} }\cdot \log{T} \right),
\end{equation}
with some constant $c>0$ depending only on $\Mcc$.
This is stronger than the mere vanishing \eqref{eq:balance -> 0 prob} of the probability on the l.h.s. of \eqref{e:claim}, asserted by Theorem \ref{thm:sign bal rand wav}(i.)
for $\mu>0$ sufficiently large, for which it is sufficient to take the constant $\mu$ sufficiently large so that $c  \mu^{d-1} > d$. We prove \eqref{e:claim} in two steps: first we apply Theorem \ref{t:cub} to obtain an upper bound on $ \Pb( \Dc_{T,u}(x;r) > \varepsilon) $ for a fixed $x \in \Mcc$ and $r \ge  \bar{r}_T$ for some $s \ge 1$, and then we use the union bound and the stability of the volume-bias to complete the proof. In what follows, $c_i > 0$ designates constants that depend only on $\Mcc$ and $\eps$. 

\vspace{2mm}

Now, let $\eps>0$ be given, $x\in\Mcc$ and $r\ge \bar{r}_T$, $B_{x}(r)$ be the corresponding geodesic ball, and let $s:=\frac{r}{\bar{r}_T}\ge 1$. Recalling that $\bar{r}_T$ in \eqref{eq:rT crit rad rand wav} is defined so as to satisfy the equality  
\begin{equation}
\label{e:barr}
(\bar{r}_T T)^{d-1} \max\{1,  \bar{r}_T \cdot\eta \} = \log T 
\end{equation}
(see \eqref{eq:bar(r) implicit equation}), in either case $r>\frac{1}{\eta}$ or $r\le \frac{1}{\eta}$ an application of Theorem \ref{t:cub} yields
\begin{equation}
\label{e:bound}
\Pb( |\Dc_{T,u}(x;r)| > \varepsilon)  < e^{ - c( s \bar{r}_T T)^{d-1} \max\{1 , s \cdot \bar{r}_T\cdot  \eta \} } \le e^{-c s^{d-1} ( \bar{r}_T\cdot  T)^{d-1} \max\{1,  \bar{r}_T\cdot \eta \}}  =   T^{-c \cdot s^{d-1} }  .
\end{equation}

\vspace{2mm}

Let us conclude the proof. The assertion of Lemma \ref{l:stability} applied to $f_{T,\eta}(\cdot)$ reads $$|\Dc_{T,u}(x;r) - \Dc_{T,u}(y;r') |<\eps$$ with some $\delta=\delta(\Mcc,\eps) > 0$, 
uniformly for $x\in\Mcc$, $r>0$, $y\in B_{x}(\delta\cdot r)$ and 
$r' \in [r, (1+\delta)r]$. Define $r_i = \mu \cdot (1 + \delta)^i \cdot \bar{r}_T$ for $0\le i\le I$, where $I=I_{T}$ is the maximum integer such that $r_I < \text{Inj}(M)$; since $\bar{r}_T \ge 1/T$, we have $I \le c_1 \log T$. 
Then for every $0 \le i \le I$, let $\{x^i_j\}_{j \le N_i}$ be a `$\delta r_i$-net', i.e. a collection of points on $\Mcc$ satisfying:
\begin{itemize}
\item $\max\limits_{x \in \Mcc} \min\limits_{j \le N_i} d(x, x^i_j) \le \delta r_i$, i.e.\ the radius $\delta r_{i}$ geodesic balls centred at $\{x^i_j\}_{j \le N_i}$ cover $\Mcc$;
\item $N_i \le c_2 r_i^{-d} \le  c_2 r_0^{-d}  \le c_3 T^d$.
\end{itemize}
We observe that for $0\le i\le I$, we have $$s_{i}:=\frac{r}{\bar{r}_T} = \mu\cdot (1+\delta)^{i} > \mu.$$ Hence, 
\eqref{e:bound} yields, for every $$(i,j)\in (i,j)\in \Jc:=\left\{\Z_{\ge 0}^{2}:\: 0 \le i \le I,\, 1 \le j \le N_i\right\},$$ the bound
 \[ \Pb( |\Dc_{T,u}(x^i_j ; r_i )| > \varepsilon)  < T^{-c  \mu^{d-1} }.  \]

Further, since $\#\Jc \le (I+1)\cdot (N_{0}+1)\le  c_4 T^d (\log T)$, the union bound implies
 \[ \Pb \Big(  \bigcup\limits_{ (i,j) \in J }  \left\{\Dc_{T,u}(x^i_j;r_i) > \varepsilon\right\} \Big) \le  c_{4} T^{d-c  \mu^{d-1} }\log{T} . \] 
Since, for every $i\le I$, the $\{x_{j}^{i}\}_{j\le N_{i}}$ is a $\delta r_{i}$-net on $\Mcc$, that means that for every $x \in \Mcc$ and $r \ge \mu \bar{r}_T$ there exists some $(i,j) \in \Jc$, such that 
$r \in [r_i, (1+\delta)r_i]$ and $y \in B_{x^i_j, \delta r_i}$. Therefore, in this context, Lemma \ref{l:stability} implies that
\begin{equation*}
\bigcup\limits_{  \substack{x \in \Mcc\\ r \ge \mu \cdot \bar{r}_T} } \left\{ \Dc_{T,u}(y;r) > 2 \varepsilon\right\} \subseteq \bigcup\limits_{ (i,j) \in J }  \left\{\Dc_{T,u}(x^i_j;r_i) > \varepsilon\right\},
\end{equation*}
and therefore, 
\[ \Pb \Big(  \bigcup\limits_{  \substack{x \in \Mcc\\ r \ge \mu \cdot \bar{r}_T}} \left\{ \Dc_{T,u}(y;r) > 2 \varepsilon\right\} \Big)\le  c_{4} T^{d-c  \mu^{d-1} }\log{T}. \] 
Replacing $\eps$ by $\frac{\eps}{2}$ and adjusting the constants accordingly, this proves \eqref{e:claim}.\end{proof}

\begin{proof}[Proof of Corollary \ref{cor:as}]
The proof of Corollary \ref{cor:as} is contained in essence within the proof of Theorem \ref{thm:sign bal rand wav}(i.) above. Fix $\mu > 0$ such that $c \mu^{d-1} > 3d$, and input it into \eqref{e:claim}. The upshot is that 
\begin{equation}
\label{eq:bound eps single T}
\Pb \Big( \sup\limits_{r \ge \mu \bar{r}_T}  \mathcal{B}_{T,u}(r)  > \varepsilon \Big) =  O\left(T^{-2d-\xi} \log{T}\right) ,
\end{equation}
with $\xi:=c \mu^{d-1}-3d>0$. 

On the other hand, the number of energy levels of $\Mcc$ contained in an energy window $[S,S+1]$ is $O(S^{2d-1} )$. Therefore, on taking into account the fact that the law of $f_{T,\eta}$ depends on the energy levels lying in $[T-\eta,T]$,
the number of genuinely different $f_{T,\eta}(\cdot)$ (with different law) corresponding to $T\in [S,S+1]$ is $O(S^{2d-1} )$, on taking into account that we do not have control on how $\eta$ varies as a function of $T$. Hence, on using \eqref{eq:bound eps single T} with the union bound, 
\begin{equation*}
\Pb \Big( \bigcup\limits_{T \in [S,S+1] } \sup\limits_{r \ge \mu \bar{r}_T}  \mathcal{B}_{T,u}(r)  > \varepsilon \Big) =O\left( S^{-1-\xi}\cdot\log{S}\right).
\end{equation*}
The almost sure convergence \eqref{eq:vol imbal->0 a.s.} now follows from the Borel-Cantelli lemma.
\end{proof}

\subsection{Lower bound for sign-balance: Proof of Theorem \ref{thm:sign bal rand wav}(ii.)}
First we assume that $u \neq 0$, whence, by symmetry we may assume that $u > 0$. Recall that $\bar{r}_T$ is given by \eqref{eq:rT crit rad rand wav} and $\tau(u)>0$ is given by \eqref{eq:tau mean bias def}, and observe that, in this case, 
we may restate Theorem \ref{thm:sign bal rand wav}(ii.) as
\begin{equation}
\label{eq:prob(exists bad x)->1}
\Pb\big(  \exists x \in \Mcc :  \Dc_{T,u}(x; \mu \bar{r}_T)  >  \varepsilon \big)  \to 1,
\end{equation}
with some $\eps > 0$ and $\mu \in (0,1)$ sufficiently small. 
The proof of \eqref{eq:prob(exists bad x)->1} is in two steps. First, we apply Theorem \ref{t:cub} to obtain a lower bound on $ \Pb( \widetilde{\Dc}_{T,u+1}(x; \mu \bar{r}_T) > - \eps') $ for some fixed $\eps' \in (0, \tau(u))$ and $x \in \Mcc$. Then we use a `sprinkled' decoupling technique to show that, for a suitably chosen collection of points $x_i \in \Mcc$, with high probability at least one of them satisfies 
$\widetilde{\Dc}_{T,u}(x_i; \mu \bar{r}_T) > - \eps'$. Since, by definition, 
\[   \Dc_{T,u}(x; r)   =   \widetilde{\Dc}_{T,u}(x; r)  + \tau(u) , \]
this will conclude the proof of \eqref{eq:prob(exists bad x)->1} (and thus of Theorem \ref{thm:sign bal rand wav}(ii.) for case $u\ne 0$).

\vspace{2mm}

For the first step, note that case (a') of Theorem \ref{thm:sign bal rand wav}(ii.) makes the extra assumption that $\eta(T) > T^{\delta_0}$ with some $\delta_{0}>0$, so that in this case 
\[ \min\left\{\eta,\frac{1}{\mu \bar{r}_T} \right\}  > C (\mu \bar{r}_T)^{(d-1)/2} \]
for fixed $\mu > 0$ and $T$ sufficiently large. Therefore, in either case (a') or (b') of Theorem \ref{thm:sign bal rand wav}(ii.), the radius $r=\bar{r}_{T}$ satisfies the assumptions of Theorem \ref{t:clb}, 
and application of which, on recalling \eqref{e:barr}, yields
\begin{equation}
\label{e:lbfix}
 \Pb\left(  \widetilde{\Dc}_{T,u+1}(x; \mu \bar{r}_T) > - \eps'\right)  > e^{-C  (\mu \bar{r}_T T)^{d-1} \max\{1,  \mu \bar{r}_T \eta \} }    \ge     e^{-C \mu^{d} ( \bar{r}_T T)^{d-1} \max\{1,  \bar{r}_T \eta \}}  =     T^{-C  \mu^d } ,
 \end{equation}
for $c > 0$ depending only on $\Mcc$, $u$, and $\eps'$.

\vspace{2mm}

We now turn to the decoupling step. We shall make use of the following \textrm{sprinkled decoupling inequality} for Gaussian random fields \cite{m23}. Let $f$ be a continuous Gaussian random field with covariance kernel $K(\cdot,\cdot)$, 
decreasing events $\Fc,\Gc$, and $v > 0$. Then one has the inequality 
\begin{equation}
\label{eq:sprinkled decoupling}
\Pb( f \in \Fc , f \in \Gc ) \le \Pb( f - v  \in \Fc)\cdot  \Pb(f - v \in \Gc)  + \frac{ c_0 k}{  v^2} 
\end{equation}
where $c_0 > 0$ is an absolute constant, and $$k := \sup\limits_{\substack{x \in \textrm{supp}(\Fc)\\ y \in \textrm{supp}(\Gc) }} |K(x,y)| ,$$ where $ \textrm{supp}(E)$ is the support of $E$, i.e.\ the smallest closed set $D$ such that $E$ is determined by $f|_D$. 

We apply the sprinkled decoupling inequality \eqref{eq:sprinkled decoupling} to the events $\{   \widetilde{\Dc}_{T,u}(x_i; r ) \le -\eps  \}$, where $r=\mu \bar{r}_T$, and $\left\{x_i\right\}_{1 \le i \le I}$ is some collection of well-separated points, as follows. For $1\le i_{0} < I$, define
$$\Fc(i_{0};u):= \bigcap\limits_{1 \le i \le i_0}  \left\{    \widetilde{\Dc}_{T,u}(x_i; r ) \le -\eps  \right\},$$  
$$\Gc(i_{0};u):= \bigcap\limits_{i_{0}+1\le i \le I}  \left\{    \widetilde{\Dc}_{T,u}(x_i; r ) \le -\eps  \right\}.$$  Then \eqref{eq:sprinkled decoupling} reads
\begin{align}
\label{e:sdi}
& \Pb \Big( \bigcap\limits_{1 \le i \le I}  \left\{    \widetilde{\Dc}_{T,u}(x_i; r ) \le -\eps  \right\}  \Big) = \Pb\left(\Fc(i_{0};u)\cap \Gc(i_{0};u)\right)\\
\nonumber & \qquad \qquad \qquad \le \Pb \left(  f_T  - v \in \Fc(i_{0};u)   \right) \cdot 
\Pb\left(  f_T - v \in  \Gc(i_{0};u)\right)+  E  \\
\nonumber &\qquad \qquad \qquad =\Pb \left(  f_T   \in \Fc(i_{0};u+v)    \right)\cdot  
\Pb \left(  f_T   \in \Gc(i_{0};u+v)\right)+  E  ,
\end{align}
where  
\begin{equation}
\label{eq:E=c sup/v^2}
E =  c_0\cdot \frac{  \sup\limits_{x \in D_{\Fc},\, y\in D_{\Gc} } | K_{T}(x,y) | } { v^2} ,
\end{equation}
with $D_{\Fc} = \textrm{supp}(\Fc) = \bigcup\limits_{1 \le i \le i_0} B_{x_i}(r)$, and $D_{\Gc} = \textrm{supp}(B) = \bigcup\limits_{i_0 + 1 \le i \le I} B_{x_i}(r)$.

\vspace{2mm}

Now we define an appropriate collection of points $\{x_i\}_{i\le I}$. Observe that, since the extra assumption $\eta(T)>T^{\delta_{0}}$ with some $\delta_{0}$ was made in case (a') of Theorem \ref{thm:sign bal rand wav}(ii), 
the hypotheses of Corollary \ref{cor:decay correlations} are satisfied. Let $\delta_{1}>0$ be as in Corollary \ref{cor:decay correlations}. 
Choose a collection of points $(x_i)_{i \le I}$, with 
\begin{equation}
\label{eq:I well-separated points}
I = I(T)= \left\lfloor T^{\delta_{1}/4} \right\rfloor,
\end{equation}
so that, in addition,  
\begin{equation}
\label{eq:dist > T^1-del1}
\text{for every } i \neq j \text{, one has       }\;\;\;\; d(B_{\mu \bar{r}_T}({x_i}) ,B_{\mu \bar{r}_T}({x_j}) ) > T^{-(1-\delta_{1})}.
\end{equation}
(For the round sphere, we constraint these balls to lie in a single hemisphere to avoid the possible degeneracies that might occur for the pure spherical harmonics ($\eta=1$)).

Let
\begin{equation*}
\bar k := \sup\limits_{\substack{x\in B_{r}(x_{i}) ,\, y\in B_{r}(x_{j})\\ i\ne j }} | K_{T}(x,y) |.
\end{equation*}
Then, on one hand, Corollary \ref{cor:decay correlations} implies that 
\begin{equation}
\label{eq:bar(k) bound decay}
\bar {k} \le \frac{C_{1}}{T^{\delta_{1}}}.
\end{equation}
On the other hand, we may apply the inequality \eqref{e:sdi} to the collection $\{x_{i}\}$, with $v=\frac{1}{I}$, first with $i_{0}=1$, and then, by induction, to the probability 
$\Pb \left(  f_T   \in \Gc(i_{0};u+v)\right)$ (where each of the $I$ steps incurs an error $E$), to eventually obtain the inequality 
\begin{equation}
\label{eq:prob intersect <= prod prob}
 \Pb \Big( \bigcap\limits_{i \le I} \left\{ \widetilde{\Dc}_{T,u}(x_i; \mu \bar{r}_T) \le - \eps \right\} \Big)   \le \prod_{i \le I} \Pb\Big(  \widetilde{\Dc}_{T,u+1}(x_i; \mu \bar{r}_T) \le - \eps  \Big) +  I\cdot E ,
\end{equation}
with $E=c_{0}\frac{\bar{k}}{v^{2}}$, as in \eqref{eq:E=c sup/v^2}. Mind that our inductive treatment is suboptimal in terms of the level $u+1$, but will do the job for us.

Therefore, we may bound 
$$I\cdot E \le c_{0}I^{3}\bar{k} \le c_{0}T^{3\delta_{1}/4}\cdot T^{-\delta_{1}} = C_{2}\cdot T^{-\delta_{1}/4}$$ with $C_{2}=c_{0}\cdot C_{1}$,
and \eqref{eq:prob intersect <= prod prob} reads 
\begin{equation*}
\Pb \Big( \bigcap\limits_{i \le I} \left\{ \widetilde{\Dc}_{T,u}(x_i; \mu \bar{r}_T) \le - \eps \right\} \Big)  \le \prod_{i \le I} \Pb\left(  \widetilde{\Dc}_{T,u+1}(x_i; \mu \bar{r}_T)\le -\eps\right) +  C_{2}\cdot T^{-\delta_{1}/4},
\end{equation*}
and further
\begin{equation}
\label{eq:prob <= (1-T)^I+o(1)}
\Pb \Big( \bigcap\limits_{i \le I} \left\{ \widetilde{\Dc}_{T,u}(x_i; \mu \bar{r}_T) \le - \eps \right\} \Big) \le \left(1- T^{-C  \mu^d }  \right)^{I} + C_{2}\cdot T^{-\delta_{1}/4},
\end{equation}
on invoking the complement inequality to \eqref{e:lbfix}. We bound the r.h.s. of \eqref{eq:prob <= (1-T)^I+o(1)} as
\begin{align*}
\left( 1-T^{-C  \mu^d }  \right)^{I} + C_{2}\cdot T^{-\delta_{1}/4}& \le e^{-T^{\delta_{1}/4}\cdot T^{-C  \mu^d }} + C_{2}\cdot T^{-\delta_{1}/4}= e^{-T^{\delta_{1}/4}\cdot T^{-C  \mu^d }} + C_{2}\cdot T^{-\delta_{1}/4} \\&=e^{-T^{\delta_{1}/4-C  \mu^d}} + C_{2}\cdot T^{-\delta_{1}/4},
\end{align*}
since $\log(1-x) \le -x$ for $|x|<1$, and on recalling the choice \eqref{eq:I well-separated points} for $I=I(T)$. 
Then, one has 
\begin{equation*}
\Pb \Big( \bigcap\limits_{i \le I} \left\{ \widetilde{\Dc}_{T,u}(x_i; \mu \bar{r}_T) \le - \eps \right\} \Big) \rightarrow 0,
\end{equation*}
provided that $C\mu^{d}< \frac{\delta_{1}}{4}$. Passing to the complement, this implies \eqref{eq:prob(exists bad x)->1} for $u\ne 0$.

\smallskip
We now turn to proving case $u = 0$, appealing to a similar argument to the above, but the analysis is different, and the differences are highlighted. Recall $\ubar{r}_T$ defined in \eqref{eq:rT crit rad rand wav}, satisfying (cf.\ \eqref{e:barr})
 \begin{equation}
 \label{e:barr2}
  (\ubar{r}_T T)^{2(d-1) } \cdot \max\{1,  \ubar{r}_T^2 T \cdot \eta \} = \log T .
  \end{equation}
An application of Theorem \ref{t:clb2} (instead of Theorem \ref{t:clb}) yields (cf.\ \eqref{e:lbfix})
 \begin{equation}
 \begin{split}
 \label{e:lbfix2}
 \Pb\left(  \widetilde{\Dc}_{T,1}(x; \mu\cdot \ubar{r}_T) >  \eps\right)  &> e^{-C  (\mu\cdot \ubar{r}_T\cdot T)^{2(d-1)} \max\{1,  \mu^2 \cdot \ubar{r}_T^2 T \cdot \eta \} }    \\&\ge     e^{-C \cdot \mu^{2d} \cdot ( \ubar{r}_T T)^{2(d-1)}
  \max\{1,  \ubar{r}_T^2 T \cdot \eta \}}  =     
 T^{-C  \mu^{2d} }.
 \end{split}
\end{equation}
  
Now let $(x_i)_{1 \le i \le I}$ be points satisfying the same properties \eqref{eq:I well-separated points} and \eqref{eq:dist > T^1-del1}, as in case $u \neq 0$. 
Using the same sprinkled decoupling procedure as in the course of the above proof of case $u\ne 0$, one may obtain the inequality
\[ \Pb \Big( \bigcap\limits_{i \le I} \left\{ \widetilde{\Dc}_{T}(x_i; \mu \ubar{r}_T) \le \eps \right\} \Big)  \le \left( 1 -   T^{-C  \mu^{2d} } \right)^{I} +  c_0 I^3 \bar k \le e^{ - I \cdot T^{-C \mu^{2d}} }   + c_0 I^3 \bar k ,\]
where, $\bar k$ has the same meaning as above, and, therefore, may be bounded by \eqref{eq:bar(k) bound decay}, thanks to Corollary \ref{cor:decay correlations}. Finally, we recall that $I$ is given by \eqref{eq:I well-separated points}, and choose $\mu$ so that $C \mu^{2d} < \delta_{1}/4$ to conclude, in a manner similar to case $u \neq 0$, that 
\[ \Pb \Big( \bigcup\limits_{i \le I} \left\{ \widetilde{\Dc}_{T,0}(x_i; \mu \ubar{r}_T) > \eps \right\} \Big)  \to 1 \]
completing the proof.

\bigskip
\appendix

\section{Sign-balance at multiple scales: Proof of Proposition \ref{prop:no scale increase}}
\label{apx:no scale increase}

We first reduce Proposition \ref{prop:no scale increase} to a statement about the existence of a certain continuous function $\xi:\R_{\ge 0}\rightarrow [-1,1]$, depending on $r > 1$, satisfying
\begin{equation}
\label{e:add}
 \int\limits_{B_{1}(x)}\xi(\|y\|) dy = 0 , \  \forall x \in \R^2, \qquad \text{and} \qquad \int\limits_{B_{r}(0)}\xi(\|y\|) dy > 0. 
 \end{equation}
Indeed, the next two lemmas guarantee that, for \textit{every} continuous $\xi:\R_{\ge 0}\rightarrow [-1,1]$, we can construct a sequence of functions $\{f_j\}$ on $\R^2$ whose \textit{asymptotic defect density} at $x \in \R^2$ is prescribed by $\xi(\|x\|)$, in the sense that, for every $B_r(x) \subseteq \R^2$,
\begin{equation}
\label{eq:asymp density prop}
\lim\limits_{j\rightarrow \infty}   \int \limits_{B_r(x)} H( f_j(y) ) \, dy  = \int \limits_{B_r(x)}  \xi( \|y\| )   dy  .
\end{equation}
The first lemma is standard, and its proof is omitted. 

 \begin{lemma}
\label{lem:red func to sets}
Let $\Ac$ be a finite collection of disjoint smooth compact domains. Then there exists a smooth function $f:\R^{2}\rightarrow \R$ such that $f^{-1}([0,\infty)) = \Ac$.
\end{lemma}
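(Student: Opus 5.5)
We would like a smooth $f:\R^{2}\rightarrow\R$ with $f^{-1}([0,\infty))=\Ac$, where $\Ac=\bigcup_{k=1}^{m}A_{k}$ is a finite union of disjoint smooth compact domains. Since the $A_{k}$ are compact and disjoint, they are at positive distance from one another. So it suffices to build one function $f_{k}$ for each $A_{k}$ and glue them together.

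For a single smooth compact domain $A$, let $\rho$ be the signed distance to $\partial A$, positive inside $A$ and negative outside. Because $\partial A$ is a smooth compact curve, $\rho$ is smooth on a tubular neighbourhood $U_{\delta}=\{|\rho|<\delta\}$, and $\nabla\rho\neq 0$ there. We define:
\begin{itemize}
\item a smooth non-decreasing $\chi:\R\rightarrow\R$ with $\chi(t)=t$ for $|t|<\delta/3$, $\chi(t)=\delta/2$ for $t\ge 2\delta/3$, $\chi(t)=-\delta/2$ for $t\le -2\delta/3$, and $\chi(t)\neq 0$ for $t\ne 0$;
\item $g_{A}:=\chi\circ\rho$ on $U_{\delta}$, extended by $\delta/2$ on $A\setminus U_{\delta}$ and by $-\delta/2$ on $\R^{2}\setminus(A\cup U_{\delta})$.
\end{itemize}
Then $g_{A}$ is smooth, is non-negative exactly on $A$, and vanishes exactly on $\partial A$.

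To handle all the domains at once, choose $\delta$ smaller than a third of the minimal distance between distinct $A_{k}$. Define $f$ to equal $g_{A_{k}}$ near each $A_{k}$ (precisely, on the $\delta$-neighbourhood of $A_{k}$) and $-\delta/2$ elsewhere. Each $g_{A_{k}}$ is already identically $-\delta/2$ outside the $2\delta/3$-neighbourhood of $A_{k}$, so the pieces agree on overlaps. Hence $f$ is well defined and smooth, and $f^{-1}([0,\infty))=\Ac$.

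The only mildly delicate step is the smoothness of the signed distance near $\partial A$. This is the standard tubular neighbourhood theorem for a compact smooth hypersurface.

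An alternative route avoids distance functions. Take a defining function $\psi$ for $\partial A$ near the boundary, with $\psi>0$ inside $A$ and $d\psi\neq 0$ on $\partial A$. Then patch it with constants $\pm 1$ via a partition of unity subordinate to $\{A^{\circ},\,U,\,\R^{2}\setminus A\}$, where $U$ is a collar of $\partial A$ on which $\psi$ is defined. Since $\psi$ and $+1$ are both positive on $A^{\circ}\cap U$, and $\psi$ and $-1$ are both negative on $U\setminus A$, convex combinations preserve the sign.
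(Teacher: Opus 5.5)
Your proof is correct. The signed-distance construction with the cut-off $\chi$ produces a smooth $g_{A}$ with $g_{A}^{-1}([0,\infty))=A$. Because the closed $2\delta/3$-neighbourhoods of the $A_k$ are pairwise disjoint, the gluing step amounts to the global formula $f=-\delta/2+\sum_k\bigl(g_{A_k}+\delta/2\bigr)$.

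The paper gives no proof to compare against: it calls the lemma standard and omits the argument, so either of your routes fills that gap. The partition-of-unity version is slightly more economical. It applies verbatim to the union $\Ac$, which is itself a compact smooth domain whose boundary is a disconnected smooth curve, so no separate gluing across components is needed.
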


\begin{lemma}
\label{lem:red comp dens}
Let $\xi(\rho):\R_{\ge 0}\rightarrow [-1,1]$ be a continuous function. Then there exists a sequence $(\Ac_{j})_j$ of collections of disjoint smooth compact domains such that, for every $B_r(x) \subseteq \R^2$, 
\begin{equation}
\label{eq:asymp density prop2}
\lim\limits_{j\rightarrow \infty} | \Ac_{j} \cap B_r(x) |  = \int \limits_{B_r(x)}  \frac{1}{2}\left(1+ \xi(\|y\|)\right)  dy .
\end{equation}
\end{lemma}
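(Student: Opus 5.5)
We will build $\Ac_j$ by a local-averaging construction at scale $1/j$. Tile $\R^{2}$ by the squares $Q = Q_{j,k} := \frac{1}{j}(k+[0,1)^{2})$, $k\in\Z^{2}$. On each square we will place one closed disc $D_{Q}$, concentric with $Q$, of area
\[ |D_{Q}| = (1-\tfrac{1}{j})\cdot \frac{1}{|Q|}\int_{Q}\tfrac12\big(1+\xi(\|y\|)\big)dy\cdot |Q| . \]
The factor $1-\frac1j$ guarantees that the disc fits strictly inside $Q$, since the target density is at most $1$. If the target area is $0$, or is smaller than some tiny threshold $j^{-10}$, we simply omit the disc; this costs at most $j^{-10}$ per square. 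To comply with the finiteness requirement of Lemma \ref{lem:red func to sets}, we use only the squares meeting $B_{j}(0)$. Then $\Ac_j$ is a finite collection of disjoint smooth compact domains.

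The verification of \eqref{eq:asymp density prop2} goes as follows. Fix $B_r(x)$ and take $j>r+\|x\|+1$. Split the squares meeting $B_r(x)$ into interior squares, those with $Q\subseteq B_r(x)$, and boundary squares, those meeting $\partial B_r(x)$.

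\textbf{Interior squares.} Here $|\Ac_j\cap Q| = |D_Q|$ equals $\int_Q \frac12(1+\xi)$ up to an error of $O(\frac1j|Q|)+j^{-10}$. Summing over the $O(j^{2}r^{2})$ interior squares gives a total error of $O(r^{2}/j + r^{2}j^{-8})\to 0$.

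\textbf{Boundary squares.} There are $O(rj)$ of them, each of area $j^{-2}$. Both $|\Ac_j\cap Q\cap B_r(x)|$ and $\int_{Q\cap B_r(x)}\frac12(1+\xi)$ are bounded by $|Q|$, so their total contribution is $O(r/j)\to0$.

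Hence the limit equals $\int_{B_r(x)}\frac12(1+\xi(\|y\|))dy$. Continuity of $\xi$ is not even needed; boundedness suffices. Measurability is automatic.

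The only mildly delicate point is ensuring disjointness and smoothness of the domains. Concentric discs inside their own half-open squares, shrunk by the factor $1-\frac1j$, handle both: they are pairwise disjoint closed sets with smooth boundary. So no step is a genuine obstacle; the main care is bookkeeping of the boundary-square errors, which is routine.
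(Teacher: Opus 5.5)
The step that fails is the geometric one you called ``mildly delicate''. A disc contained in a square of side $\frac1j$ has area at most $\frac{\pi}{4}j^{-2}$. So the factor $1-\frac1j$ does not make $D_Q$ fit inside $Q$ once the average of $\frac12(1+\xi)$ over $Q$ is at least $\frac{\pi}{4}(1-\frac1j)^{-1}$. The hypotheses allow this: it happens for large $j$ whenever $\xi>\frac{\pi}{2}-1$ on an open set, e.g.\ $\xi\equiv 1$ near some point.

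In that regime your discs spill into neighbouring squares. Disjointness is then lost, and with it the identity $|\Ac_j\cap Q|=|D_Q|$ that your interior-square estimate relies on. If you instead cap each disc at the inscribed one, the limiting density becomes $\min\{\frac12(1+\xi),\frac{\pi}{4}\}$ rather than $\frac12(1+\xi)$.

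The repair is to use a shape that can fill almost all of $Q$. For example, take the superellipse $\{y:\,(y_1-c_1)^{p}+(y_2-c_2)^{p}\le\rho^{p}\}$ centred at the centre $c$ of $Q$, with $p=p_j$ an even integer. Its boundary is smooth, since the gradient of the defining polynomial does not vanish on it. Its area as a fraction of the circumscribed square tends to $1$ as $p\to\infty$. Choose $p_j$ so that this fraction exceeds $1-\frac{1}{2j}$, then pick $\rho<\frac{1}{2j}$ by continuity to hit the prescribed area exactly. This is in effect what the paper does: it simply posits a smooth domain of any prescribed area in $(0,R_j^{-2})$ inside the open box.

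With that fix your argument is correct, and it differs mildly from the paper's. The paper gives each box the point value of $\frac12(1+\xi)$ at its corner, which needs continuity of $\xi$. It reduces to $\xi\in(-1,1)$ by a density argument so that the areas lie strictly inside $(0,R_j^{-2})$. It then concludes via vague convergence and the Portmanteau theorem.

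Your local averages, together with the $1-\frac1j$ shrinking and the omission of tiny pieces, avoid both the continuity requirement and the density reduction; bounded measurable $\xi$ suffices, though measurability is not automatic for a merely bounded $\xi$. Your interior/boundary count also gives an explicit $O(1/j)$ rate for each fixed ball $B_r(x)$.
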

\begin{proof}
Let us suppose that $\xi(\rho) \in (-1,1)$ for simplicity (otherwise we can apply a density argument to reduce to this case). Let $(R_j)_j$ be a sequence such that $R_j \to \infty$ as $j \to \infty$. For every $j \in \mathbb{N}$ and integers $-R_j^2 \le i_1,i_2 \le R_j^2$, let $\Ac_{j; i_1,j_2}$ be a smooth domain contained in the interior of the box 
\[ R^{-1}_j (i_1,i_2) +  R_j^{-1} \cdot [0,1]^2  \subseteq \R^2 \]
such that its volume satisfies
\[ |\Ac_{j; i_1,j_2}| =   \frac{R_j^{-2} }{2} \Big(1+ \xi \Big( R^{-1}_j \sqrt{i_1^2 + i_2^2} \Big) \Big) \in (0, R_j^{-2} )  ,\]
which is possible by the assumption that $\xi(\rho) \in (-1,1)$. Observe that $(\Ac_{j; i_1,j_2})_{i_1,i_2}$ are disjoint, and define $\Ac_j = \bigcup\limits_{i_1,i_2} \Ac_{j; i_1,j_2}$. It is easy to check that the indicator of $\Ac_j$ converges vaguely to $\frac{1}{2}(1+\xi)$ in the sense of integration against continuous bounded and compactly supported functions. Hence the sequence $\Ac_j$ satisfies \eqref{eq:asymp density prop2} by the Portmanteau Theorem.
\end{proof}
 
In fact, a simple modification of the proof shows that one could, in addition, impose the condition that $\Ac_j$ is a single smooth compact domain for every $j \ge 1$. We next confirm the existence of a suitable asymptotic defect density, with the help of the following lemma about the positive zeros $\{z_{k}\}_{k \in \mathbb{N}}$ of the Bessel $J_{1}$ function (in increasing order). 

\begin{lemma}
\label{lem:bess}
For every $r > 1$ there exists a number $k \in \mathbb{N}$ such that $r\cdot z_{k}$ is not contained in $\{z_{k}\}_{k \in \mathbb{N}}$. 
\end{lemma}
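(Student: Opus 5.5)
Suppose, for contradiction, that some $r>1$ satisfies $r z_k\in\{z_m\}_{m\in\mathbb{N}}$ for every $k\in\mathbb{N}$. Then the map $k\mapsto m(k)$ defined by $r z_k = z_{m(k)}$ is strictly increasing, so $m(k)\ge k$. I will get a contradiction from the McMahon-type asymptotics of the zeros of $J_1$, namely
\[ z_k = \pi\Big(k+\tfrac14\Big) - \frac{3}{8\pi (k+\frac14)} + O(k^{-3}), \]
which follows from Lemma \ref{lem:aux simple Bessel}(i.) with $d=2$ together with the standard refinement of the Bessel expansion at infinity. The first consequence is a counting estimate. Since $z_{m(k)} = r z_k$, we have $m(k)+\frac14 = r(k+\frac14) + O(1/k)$. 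Hence the integer $m(k) - r k$ equals $\frac{r-1}{4} + O(1/k)$.

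I first handle irrational $r$. The numbers $m(k)-rk$ are congruent to $-rk$ modulo $1$. Their fractional parts equidistribute, or at least are dense, whenever $r$ is irrational. This contradicts their convergence to the constant $\frac{r-1}{4}$.

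So $r=p/q$ is rational. Take $k$ along an arithmetic progression $k = k_0 + q n$. Then $m(k)-rk$ is confined to a fixed coset, and it converges; so along each residue class it is eventually equal to an exact constant $a$ with $a = \frac{r-1}{4}$ modulo the lattice $\frac1q\mathbb{Z}$. Now I compare second-order terms. Writing $m = rk + a$ with $a=\frac{r-1}{4}$ (the only choice consistent with the limit), the exact relation $z_m = r z_k$ becomes
\[ \pi\Big(m+\tfrac14\Big) - \frac{3}{8\pi(m+\frac14)} = r\pi\Big(k+\tfrac14\Big) - \frac{3r}{8\pi(k+\frac14)} + O(k^{-3}). \]
Since $m+\frac14 = r(k+\frac14)$ exactly, this reduces to
\[ \frac{3}{8\pi r (k+\frac14)} = \frac{3r}{8\pi(k+\frac14)} + O(k^{-3}). \]
Multiplying by $k$ and letting $k\to\infty$ forces $1/r = r$, that is $r=1$, which is a contradiction.

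The main obstacle is the rational case. The key step there is to ensure that the first-order term is matched exactly, so that $m+\frac14=r(k+\frac14)$ holds identically for large $k$. This follows because $m(k)-rk-\frac{r-1}{4}$ lies in the discrete set $\frac1q\mathbb{Z}$ and is $O(1/k)$, so it eventually vanishes. A single large $k$ in such a progression suffices, so no equidistribution is needed beyond this. Indeed the same discreteness argument with $q$ replaced by denseness handles the irrational case as well: for irrational $r$, $m-rk-\frac{r-1}{4}=O(1/k)$ for all $k$ is already impossible by Dirichlet/Weyl. I would cite the standard McMahon expansion (e.g.\ from \cite{szego} or Watson) for the $O(k^{-3})$ error.
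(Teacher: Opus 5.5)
Your proof is correct and follows essentially the same route as the paper's: McMahon's expansion, then exact first-order matching that forces $m_k = rk + \frac{r-1}{4}$ for all large $k$, then comparison of the $1/k$ coefficients to get $r = 1/r$. The only difference is how the exact first-order matching is obtained. The paper avoids your irrational/rational case split by differencing consecutive indices: $m_{k+1}-m_k = r + o(1)$ forces $r\in\mathbb{Z}$, and integrality then makes the relation eventually exact.
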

\begin{proof}
Let $r > 1$, and suppose for contradiction that for every $k\ge 1$ there exists $m=m_k\in\Z$ with $r\cdot z_{k} = z_{m}$. Recall the asymptotics 
\begin{equation}
\label{eq:Bessel zeros asymp}
z_{k} = \Big( k+\frac{1}{4}  \Big)\pi - \frac{3}{8\pi \Big( k+\frac{1}{4}  \Big)} + O\Big( \frac{1}{k^{3}} \Big) .
\end{equation}
By \eqref{eq:Bessel zeros asymp} we have
\begin{equation}
\label{eq:offset 1/4}
4 r\cdot ( k+ 1 ) = 4 m_k + 1 + o(1).
\end{equation}
Subtracting the corresponding asymptotics for $k$ and $k+1$ we obtain
\begin{equation*}
r = m_{k+1}-m_{k} + o(1),
\end{equation*} 
which forces $r\in \Z_{\ge 2}$. Since this ensures that both sides of \eqref{eq:offset 1/4} are integers, the asymptotic equality in \eqref{eq:offset 1/4} must eventually be exact, i.e.\ for sufficiently large $k$
\begin{equation}
\label{eq:m=rk+C}
m_k = r\cdot k + C
\end{equation}
with $C=C(r) := \frac{1}{4}(r-1)$. Substituting \eqref{eq:m=rk+C} into \eqref{eq:Bessel zeros asymp} yields the asymptotic equality
\[ r\cdot \Big( k+\frac{1}{4}  \Big)\pi - r\cdot \frac{3}{8\pi \left( k+\frac{1}{4}  \right)} + O\Big( \frac{1}{k^{3}} \Big) = \Big( rk+C+\frac{1}{4}  \Big)\pi - \frac{3}{8\pi \left( rk+C+\frac{1}{4}  \right)} + O\Big( \frac{1}{k^{3}} \Big). \]
Comparing the coefficients of $\frac{1}{k}$, this forces $r=\frac{1}{r} = 1$, a contradiction.
\end{proof}

Now fix $r > 1$, and let $k = k_r$ be such that $r \cdot z_k$ is not contained in $\{z_{k}\}_{k \in \mathbb{N}}$ (possible by Lemma \ref{lem:bess}).  Abbreviate $s = s_r := z_k$ and define
\begin{equation}
\label{eq:xi J0 def}
\xi(\rho) = \xi_r(\rho) := (-1)^{\id_{J_1(rs) < 0}} J_{0} ( s\cdot \rho ).
\end{equation}

\begin{lemma}
\label{lem:dens exist}
For every $r > 1$, the function $\xi = \xi_r$ defined in \eqref{eq:xi J0 def} satisfies \eqref{e:add}.
\end{lemma}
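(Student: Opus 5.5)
The plan is to use the fact that $y\mapsto J_0(s\|y\|)$ is a Helmholtz eigenfunction on $\R^2$, $\Delta u + s^2 u = 0$ (it is the radial solution, regular at the origin). Both conditions in \eqref{e:add} will then follow from the mean value property for the Helmholtz equation, together with the Bessel identity $\frac{d}{dt}\big(tJ_1(t)\big) = tJ_0(t)$. Note first that $\xi$ is continuous and takes values in $[-1,1]$, since $|J_0|\le 1$. Also, the sign prefactor in \eqref{eq:xi J0 def} is a constant, so it plays no role in the first condition of \eqref{e:add}.

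\textbf{Step 1 (mean value formula).} For any smooth solution $u$ of $\Delta u + s^2 u=0$ on $\R^2$ and any $x\in\R^2$, set $m(\rho):=\frac{1}{2\pi}\int_0^{2\pi}u(x+\rho e^{i\theta})\,d\theta$. Averaging the equation over rotations about $x$ shows that $m$ solves the radial Bessel ODE
\[ m''+\tfrac1\rho m' + s^2 m = 0. \]
Moreover $m$ is regular at $\rho=0$ with $m(0)=u(x)$. The second solution $Y_0$ of this ODE is singular at the origin, so $m(\rho)=u(x)J_0(s\rho)$. Integrating in polar coordinates about $x$ then gives
\[ \int_{B_\rho(x)}u(y)\,dy = 2\pi u(x)\int_0^\rho J_0(st)\,t\,dt = \frac{2\pi\rho}{s}\,J_1(s\rho)\,u(x), \]
where the last equality uses $\frac{d}{dt}(tJ_1(t))=tJ_0(t)$.

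\textbf{Step 2 (vanishing on unit balls).} Apply Step 1 with $u(y)=J_0(s\|y\|)$ and $\rho=1$. Since $s=z_k$ is a zero of $J_1$, the factor $J_1(s)$ vanishes, so $\int_{B_1(x)}\xi(\|y\|)\,dy=0$ for every $x\in\R^2$.

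\textbf{Step 3 (positivity on $B_r(0)$).} A direct polar computation, or Step 1 with $x=0$ and $u(0)=1$, gives
\[ \int_{B_r(0)}J_0(s\|y\|)\,dy=\frac{2\pi r}{s}\,J_1(rs). \]
By the choice of $k$ via Lemma \ref{lem:bess}, $rs$ is not a positive zero of $J_1$, and $rs>0$, so $J_1(rs)\neq0$. Multiplying by the sign prefactor $(-1)^{\id_{J_1(rs)<0}}$ yields
\[ \int_{B_r(0)}\xi(\|y\|)\,dy=\frac{2\pi r}{s}\,|J_1(rs)|>0, \]
which is the second condition in \eqref{e:add}.

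The only mildly delicate point is Step 1, specifically justifying that the circular mean is exactly $u(x)J_0(s\rho)$. This rests on the regularity of $m$ at the origin, which excludes the $Y_0$ component, together with uniqueness for the Bessel ODE given the initial value $m(0)=u(x)$ and $m'(0)=0$. Everything else is the standard Bessel antiderivative.
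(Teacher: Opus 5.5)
Your proof is correct, and it runs in physical space where the paper works on the Fourier side.

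The paper's argument goes as follows. It notes that $\widehat{\xi}$ is a multiple of the uniform measure on the circle $\{\|\zeta\|=s\}$. The ball average $\xi\star\chi_t$ is multiplication by $\widehat{\chi_t}$, a multiple of $J_1(t\|\zeta\|)/\|\zeta\|$, which vanishes on that circle when $t=1$ because $J_1(s)=0$. The value at radius $r$ then comes from Plancherel. Since $\xi$ lies in neither $L^1$ nor $L^2$, all of this is done only formally, and the mollification argument needed to justify it is omitted.

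Your mean value formula $\int_{B_\rho(x)}u=\frac{2\pi\rho}{s}J_1(s\rho)\,u(x)$ is the physical-space form of the same multiplier identity, namely $\chi_\rho\star u=\widehat{\chi_\rho}(s)\,u$ for a Helmholtz solution at frequency $s$. The difference is that your derivation is fully rigorous with no distribution theory: rotational averaging, the radial Bessel ODE, and discarding $Y_0$ by boundedness at the origin. It also gives the exact value $\frac{2\pi r}{s}|J_1(rs)|$ of the second integral. The constant displayed in the paper differs from this, which is harmless since only the sign is used.

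The Fourier picture is the more conceptual one. It shows at a glance that balance at radius $1$ and imbalance at radius $r$ depend only on whether $J_1(\cdot)$ and $J_1(r\,\cdot)$ vanish on the single frequency circle carrying $\widehat{\xi}$.

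One small precision about your closing remark. The point $\rho=0$ is a singular point of the Bessel ODE, so the usual initial-value uniqueness does not apply there. What actually pins down $m(\rho)=u(x)J_0(s\rho)$ is excluding $Y_0$ via boundedness of $m$ near $0$, which you do give.
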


\begin{proof}
For $t > 0$, we write
\begin{equation*}
\psi_t(x) = \int\limits_{B_{t}(x)}\xi(|y|) dy = \left(\xi \star \chi_{t}\right)(x),
\end{equation*}
where $\star$ denotes convolution, $\chi_{t}$ is the characteristic function of the ball $B_t(0)$, and we abuse notation by writing $\xi(\cdot)$ for the radial function $\xi(\|y\|)$. Formally, the Fourier transform of $\psi_t$ is 
\begin{equation*}
\widehat{\psi_t}(\zeta) = \widehat{\xi}(\zeta)\cdot ( t \widehat{\chi_{1}}(t \zeta) ),
\end{equation*}
where (cf.\ \eqref{eq:Ad Fourier Bessel})
$$\widehat{\chi_{1}}(\zeta) = 2\pi\cdot \frac{J_{1}(\|\zeta\|)}{\|\zeta\|}, $$
 and, recalling the definition \eqref{eq:xi J0 def} of $\xi$, $\widehat{\xi}$ is formally $(-1)^{\id_{J_1(rs) < 0}} \times \frac{1}{s}$ times the uniform measure on $\partial B_{s}(0)$. Since $s$ is a zero of $J_{1}(\cdot)$, it follows that $\widehat{\psi}_1(\zeta)\equiv 0$, which is the first statement in \eqref{e:add}. Similarly for the second statement in \eqref{e:add}, we formally have by Plancherel 
\begin{equation*}
\int\limits_{B_{r}(0)}\xi(|y|) dy = \psi_r(0) = \left\langle  \xi,\, \chi_{1}\left(\frac{\cdot}{r}\right) \right\rangle = \left\langle  \widehat{\xi},\, r\widehat{\chi_{1}}\left(r\cdot \right) \right\rangle =  2\pi\cdot \frac{|J_{1}(rs)|}{s^{2}} > 0 .
\end{equation*}
To rigorously justify these arguments one can mollify $\xi$ and pass to the limit, but we omit the details.
\end{proof}

We can now conclude the proof of Proposition \ref{prop:no scale increase}:

\begin{proof}[Proof of Proposition \ref{prop:no scale increase}]
Let $\xi$ be the function in Lemma \ref{lem:dens exist}. Applying Lemmas \ref{lem:red func to sets} and \ref{lem:red comp dens} we obtain a sequence $\{f_{j}\}_{j\ge 1}$ of smooth functions whose positive nodal domains $A_{j}=f^{-1}([0,\infty))$ satisfy \eqref{eq:asymp density prop2}, and which therefore also satisfy  \eqref{eq:asymp density prop}. Recalling \eqref{e:add}, this completes the proof of the pointwise convergence in Proposition \ref{prop:no scale increase}. It remains to argue that the limit in \eqref{eq:sign balan rad=1} is uniform w.r.t.\ $x$ in compact sets. Indeed, this follows directly from the {\em equicontinuity} of the function  $x\mapsto \int_{B_{1}(x)}H(f_{j}(y))dy$ since the convergence of equicontinuous functions on a compact set to a continuous limit is {\em uniform}.
\end{proof}

\section{Defect derivatives w.r.t.\ deformation: Proof of Lemma \ref{lem:defect derivatives}}
\label{sec:defect derivatives}
For simplicity, we assume that the nodal line $\phi_{0}^{-1}(0)$ does not intersect the boundary $\partial\Pi$, as otherwise we may apply a density argument to reduce to this case. The function $\phi_{0}$ naturally gives rise to the partition 
\begin{equation}
\label{e:part}
\Pi = \Cc_{1}^{+}\cup\ldots \cup \Cc^{+}_{m}\cup \Cc^{-}_{1}\cup\ldots\cup\Cc^{-}_{k}
\end{equation}
of $\Pi$ into the {\em nodal domains} of $\phi_{0}$ intersecting $\Pi$, where $\Cc^{+}_{j}$, $1\le j\le m$ (resp.\ $\Cc^{-}_{j}$, $1\le j\le k$) are the {\em positive} (resp.\ negative) nodal domains. Then,
\begin{equation}
\label{eq:D(0) sum areas}
D(0) = \frac{1}{|\Pi|} \Big(\sum\limits_{j=1}^{m}|\Cc_{j}^{+}|- \sum\limits_{j=1}^{k}|\Cc_{j}^{-}| \Big) = \frac{2}{|\Pi|}\sum\limits_{j=1}^{m}|\Cc_{j}^{+}|- 1.
\end{equation}
Since, by assumption, $\phi_{0}$ does not have critical zeros, by Morse theory the partition of $\Pi$ \eqref{e:part} into nodal domains of $\phi_{t}$ is locally constant for $|t|$ sufficiently small. Moreover, for every $1\le j\le m$ there is a smooth bijective map $$p:\partial\Cc^{+}_{j}\rightarrow \partial\Cc^{+}_{j}$$ between the corresponding boundaries, so that 
\begin{equation}
\label{eq:p(x)=x+rN}
p(x) = x+ r(x;t)\cdot \vec{N}(x),
\end{equation}
where $\vec{N}(x)= \frac{\nabla \phi_{0}(x)}{\|\nabla \phi_{0}(x)\|}$ is a {\em inward} unit normal vector to $\partial \Cc_{j}^{+}$ at $x\in\partial\Cc_{j}^{+}$, in accordance to Lemma \ref{lem:defect derivatives}(ii.) 
(with the same, suitably adjusted, holding for the negative nodal domains),
and $$r(x;t):\partial\Cc_{j}^{+}\times [-\delta_{0},\delta_{0}]\rightarrow \R$$ is a smooth $2$-variable function so that $r(x;0)\equiv 0$, cf. ~\cite[Proof of Lemma 4.7]{BW}.
With the help of the implicit equation $$\phi_{t}(p(x)) = \phi_{0}\left(x+r(x;t)\cdot\vec{N}(x)\right) + t\psi\left(x+r(x;t)\cdot\vec{N}(x)\right)=0$$ w.r.t.\ $t$ in a neighbourhood of $0$, we may recover 
\begin{equation}
\label{eq:r(x;t) asymp}
r(x;t) = -t\cdot \frac{\psi(x)}{\|\nabla \phi_{0}(x)\|} + O(t^{2}).
\end{equation}
Therefore, for $j\le m$, $$|\Cc_{j}^{+}(t)| = |\Cc_{j}^{+}| + t\cdot \int\limits_{\partial\Cc_{j}^{+}} \frac{\psi(x)}{\|\nabla \phi_{0}(x)\|}   dx + O(t^{2}),$$
with the correction term, stemming from the curvature of $\partial\Cc_{j}$, absorbed in the $O(t^{2})$ error term. Inserting into \eqref{eq:D(0) sum areas}, and comparing it to the analogous expression for $D(t)$, shows that
\begin{equation}
\label{eq:D'(0) expr int2} 
D'(0) = \frac{2}{|\Pi|}\sum\limits_{j=0}^{m}\int\limits_{\partial\Cc_{j}^{+}} \frac{\psi(x)}{\|\nabla \phi_{0}(x)\|}   dx = \frac{2}{|\Pi|}\int\limits_{\phi_{0}^{-1}(0)\cap \Pi} \frac{\psi(x)}{\|\nabla \phi_{0}(x)\|}   dx .
\end{equation}

\vspace{2mm}
To evaluate $D''(0)$, we observe that the same argument leading to \eqref{eq:D'(0) expr int2} gives
\begin{equation}
\label{eq:D'(t) expr int}
\begin{split}
D'(t) &= \frac{2}{|\Pi|}\int\limits_{\phi_{t}^{-1}(0)\cap \Pi} \frac{\psi(x)}{\|\nabla  \phi_{t}(x)\|} dx ,
\end{split}
\end{equation}
provided $|t|$ is sufficiently small. Let us denote for brevity 
\begin{equation}
\label{eq:Upsilon func def}
\Upsilon_{t}(x):=\frac{\psi(x)}{\|\nabla  \phi_{t}(x)\|},
\end{equation}
and $\gamma_{t}:=\phi_{t}^{-1}(0)\cap \Pi$. Writing 
\begin{equation*}
\int\limits_{\gamma_{t}} \Upsilon_{t}(x)dx = \int\limits_{\gamma_{0}} \Upsilon_{t}(x)dx + \Big(\int\limits_{\gamma_{t}} \Upsilon_{t}(x)dx-\int\limits_{\gamma_{0}} \Upsilon_{t}(x)dx   \Big) ,
\end{equation*}
we have $D''(0) = \frac{2}{|\Pi|}\cdot \left( I_{1}(0)+I_{2}(0)\right)$
where 
\begin{equation}
\label{eq:I2 def Upsilon gamma}
I_{1}(t) := \int\limits_{\gamma_{0}} \frac{\partial}{\partial t}\Upsilon_{t}(x)dx \qquad \text{and} \qquad I_{2}(t): = \frac{\partial}{\partial t}\Big[\int\limits_{\gamma_{t}} \Upsilon_{t}(x)dx-\int\limits_{\gamma_{0}} \Upsilon_{t}(x)dx   \Big] .
\end{equation}
Recalling \eqref{eq:Upsilon func def},
\begin{align*}
I_{1}(t)&=-\int\limits_{\gamma_{0}}   \frac{\psi(x)}{\|\nabla  \phi_{t}(x)\|^{2}} \cdot \frac{\partial}{\partial t}\left[  \|\nabla  \phi_{t}(x)\|  \right]        dx = -\int\limits_{\gamma_{0}}   \frac{\psi(x)}{\|\nabla  \phi_{t}(x)\|^{3}}  \cdot\frac{\partial}{\partial t}\left[\left\langle \nabla\phi_{t}(x),\nabla\phi_{t}(x)\right\rangle \right]        dx\\
&=-\int\limits_{\gamma_{0}}   \frac{\psi(x)}{\|\nabla  \phi_{t}(x)\|^{3}}  \cdot\left\langle \frac{\partial}{\partial t}\left[\nabla\phi_{t}(x)\right],\nabla\phi_{t}(x)\right\rangle        dx.
\end{align*}
Since $\frac{\partial}{\partial t}\left[\nabla\phi_{t}(x)\right]=\nabla\psi(x)$, we obtain
\begin{equation}
\label{eq:I10 expl}
I_{1}(0) = -\int\limits_{\gamma_{0}}   \frac{\psi(x)}{\|\nabla  \phi_{t}(x)\|^{3}}  \cdot \left\langle \nabla\psi(x),\nabla\phi_{0}(x)\right\rangle        dx.
\end{equation}
Next, we evaluate $I_{2}(0)$. We have
\begin{equation*}
\int\limits_{\gamma_{t}} \Upsilon_{t}(x)dx = \int\limits_{\gamma_{0}} \Upsilon_{t}\left(x+r(x;t)\cdot \vec{N}(x)\right) \cdot \left(1-\kappa(x)\cdot r(x;t)\right)dx,
\end{equation*}
where $r(x;t)$ and $\vec{N}(x)$ are as in \eqref{eq:p(x)=x+rN}, and $\kappa(x)$ is as in Lemma \ref{lem:defect derivatives}(ii.). Inserting into \eqref{eq:I2 def Upsilon gamma} gives
\begin{equation*}
I_{2}(0)=\int\limits_{\gamma_{0}}  \frac{\partial}{\partial t}\left[\Upsilon_{t}\left(x+r(x;t)\cdot \vec{N}(x)\right) \cdot \left(1-\kappa(x)\cdot r(x;t)\right)\right]\bigg|_{t=0} dy.
\end{equation*}
Since $\Upsilon_{t}(\cdot)$ is given by \eqref{eq:Upsilon func def}, and $r(x;t)$ is given by \eqref{eq:r(x;t) asymp}, we may write
\begin{equation}
\label{eq:I2(0) int gamma0}
I_{2}(0)=  -\int\limits_{\gamma_{0}}  \frac{\psi(x)}{\|\nabla\phi_{0}(x)\|}\cdot \partial_{\vec{N}(x)}\left[\frac{\psi(x)}{\|\nabla  \phi_{0}(x)\|}  \right]dx
+\int\limits_{\gamma_{0}} \frac{\psi(x)}{\|\nabla  \phi_{0}(x)\|} \cdot \kappa(x)\frac{\psi(x)}{\|\nabla \phi_{0}(x)\|}  dx .
\end{equation}
As above, we evaluate
\begin{align*}
\partial_{\vec{N}(x)} \Big[\frac{\psi(x)}{\|\nabla  \phi_{0}(x)\|}  \Big] &= \Big\langle \vec{N}(x),  \nabla\Big[\frac{\psi(x)}{\|\nabla  \phi_{0}(x)\|}\Big] \Big\rangle \\
& =\frac{1}{\|\nabla\phi_{0}(x)\|^{3}} \left\langle \nabla\phi_{0}(x), \nabla\psi(x)\cdot \|\nabla\phi_{0}(x)\| - \psi(x)\cdot\nabla\|\nabla\phi_{0}(x)\|\right\rangle
\\&= \frac{\left\langle \nabla\phi_{0}(x),\nabla\psi(x)\right\rangle}{\|\nabla\phi_{0}(x)\|^{2}} - \frac{\psi(x)}{\|\nabla\phi_{0}(x)\|^{3}}\partial_{\nabla\phi_{0}(x)}\left[\left\|\nabla\phi_{0}(x)\right\|\right].
\end{align*}
Substituting into \eqref{eq:I2(0) int gamma0} yields the expression
\begin{equation}
\label{eq:I20 expl}
\begin{split}
I_{2}(0) &= -\int\limits_{\gamma_{0}}\frac{\psi(x)\cdot\left\langle \nabla\phi_{0}(x),\nabla\psi(x)\right\rangle}{\|\nabla\phi_{0}(x)\|^{3}}dx + \int\limits_{\gamma_{0}}\frac{\psi(x)^{2}}{\|\nabla\phi_{0}(x)\|^{4}}\cdot \partial_{\nabla\phi_{0}(x)}\left[\left\|\nabla\phi_{0}(x)\right\|\right] dx\\& \qquad \qquad +\int\limits_{\gamma_{0}} \frac{\psi(x)^{2}}{\|\nabla  \phi_{0}(x)\|^{2}} \cdot \kappa(x)dx.
\end{split}
\end{equation}
Consolidating \eqref{eq:I10 expl} and \eqref{eq:I20 expl}, and noticing that $I_{1}(0)$ coincides with the first term on the r.h.s.\ of \eqref{eq:I20 expl}, concludes the proof.


\bibliographystyle{halpha-abbrv}


\end{document}